\newtheorem{theorem}{Theorem}[section]
\newtheorem{lemma}[theorem]{Lemma}
\theoremstyle{definition}
\newtheorem{definition}[theorem]{Definition}
\newtheorem{conjecture}[theorem]{Conjecture}
\newtheorem{corollary}[theorem]{Corollary}
\newtheorem{proposition}[theorem]{Proposition}
\theoremstyle{remark}
\newtheorem{remark}[theorem]{Remark}
\DeclareMathOperator{\ch}{ch}
\DeclareMathOperator{\gr}{gr}
\DeclareMathOperator{\Tr}{Tr}
\DeclareMathOperator{\Hilb}{Hilb}
\DeclareMathOperator{\Fr}{Fr}
\DeclareMathOperator{\Fl}{Fl}
\DeclareMathOperator{\Gr}{Gr}
\newcommand{\Hl}{\widetilde{H}_{\lambda}}
\newcommand{\Hlt}{\widetilde{H}_{\lambda^t}}
\newcommand{\Vv}{\mathbf{v}}
\newcommand{\CP}{\mathcal{P}}
\newcommand{\CO}{\mathcal{O}}
\newcommand{\CT}{\mathcal{T}}
\newcommand{\CI}{\mathcal{I}}
\newcommand{\CL}{\mathcal{L}}
\newcommand{\CF}{\mathcal{F}}
\newcommand{\CW}{\mathcal{W}}
\newcommand{\la}{\lambda}
\newcommand{\BC}{\mathbb{C}}
\newcommand{\BK}{\mathbb{K}}
\newcommand{\BP}{\mathbb{P}}
\newcommand{\BZ}{\mathbb{Z}}
\newcommand{\h}{\mathbf{H}}
\newcommand{\sh}{\mathbf{SH}}
\newcommand{\slz}{SL_2(\BZ)}
\newcommand{\syt}{\textrm{SYT}}
\newcommand{\e}{\varepsilon}
\newcommand{\sym}{\textrm{Sym}}
\newcommand{\res}{\textrm{Res}}
\newcommand{\Tes}{\textrm{Tes}}
\newcommand{\Des}{\textrm{Des}}
\newcommand{\dinv}{\textrm{dinv}}
\newcommand{\qTes}{\textrm{qTes}}
\newcommand{\sq}{\square}
\newcommand{\pf}{\text{PF}}
\newcommand{\ph}{\varphi_{\frac 1{1-1/t}}}
\newcommand{\phh}{\varphi^{-1}_{\frac 1{1-1/t}}}
\newcommand{\TP}{\widetilde{P}}
\newcommand{\TCP}{\widetilde{\CP}}
\title{Refined knot invariants and Hilbert schemes}
\author{Eugene Gorsky}
\address {Department of Mathematics, Columbia University, 2990 Broadway \\ New York, NY 10027 }
\address {Department of Mathematics, UC Davis, One Shields Ave\\ Davis, CA 95616}
\address {International Laboratory of Representation Theory and Mathematical Physics\\
NRU-HSE, 7 Vavilova St.\\ 
Moscow, Russia 117312}
\email{egorsky@math.columbia.edu}
\author{Andrei Negu\cb t}
\address{Mathematics Department, Columbia University, New York, NY 10027, USA}
\address{Simion Stoilow Institute of Mathematics, Bucharest, Romania}
\email{andrei.negut@gmail.com}
\begin{document}

\begin{abstract}
We consider the construction of refined Chern-Simons torus knot invariants by M. Aganagic and S. Shakirov from the DAHA viewpoint of I. Cherednik. We give a proof of Cherednik's conjecture on the stabilization of superpolynomials, and then use the results of O. Schiffmann and E. Vasserot to relate knot invariants to the Hilbert scheme of points on $\BC^2$. Then we use the methods of the second author to compute these invariants explicitly in the uncolored case. We also propose a conjecture relating these constructions to the rational Cherednik algebra, as in the work of the first author, A. Oblomkov, J. Rasmussen and V. Shende. Among the combinatorial consequences of this work is a statement of the $\frac mn$ shuffle conjecture.
\end{abstract}

\maketitle


\section{Introduction}

In \cite{AS}, Aganagic and Shakirov defined refined invariants of the $(m,n)$ torus knot by constructing two matrices $S$ and $T$ that act on an appropriate quotient of the Fock space, and satisfy the relations in the group $\slz$ (as in the work of Etingof and Kirillov). They conjectured that these invariants match the Poincar\'e polynomials of Khovanov-Rozansky HOMFLY homology (\cite{KhR2,KhSoergel}) of torus knots. The computation of Khovanov-Rozansky homology for torus knots is a hard open problem in knot theory, and the Aganagic-Shakirov conjecture has been verified in a few cases when this homology can be explicitly computed from the definition.

In \cite{Ch}, Cherednik reinterpreted the construction of \cite{AS} in terms of the spherical double affine Hecke algebra $\sh$ of type $A$, by replacing the $\slz$ action on the Fock space representation by an $\slz$ action on the DAHA. As such, he obtained a conjectural definition for the three variable torus knot invariant known as the \textbf{superpolynomial} $\CP_{n,m}^\lambda(u,q,t)$, defined for all partitions $\lambda$ and all pairs of coprime integers $m,n$. In Cherednik's viewpoint, these superpolynomials arise as evaluations of certain elements $P_{n,m}^\lambda$ in the DAHA. These elements are polynomials in:
$$
P_{kn,km} \in \sh, \qquad \forall k\in \BZ,
$$ 
by the same formula as the well-known Macdonald polynomials $P_\lambda$ are polynomials in the power sum functions $p_k$, for all partitions $\lambda$. The DAHA $\sh$ is bigraded in such a way that $P_{kn,km}$ lies in bidegrees $(kn,km)$, and the action of $\slz$ on $\sh$ is by automorphisms which permute the bidegrees of the elements $P_{kn,km}$. Cherednik constructs this action by a sequence of elementary transformations, which are however rather difficult to describe by a closed formula. 

Schiffmann and Vasserot (\cite{svhilb,svmacd}) give an alternate description of the DAHA by showing that it is isomorphic to the elliptic Hall algebra. Under this automorphism, $P_{kn,km}$ correspond to the standard generators of the elliptic Hall algebra described by Burban and Schiffmann in \cite{BS}. Moreover, Schiffmann and Vasserot show that $\sh$ acts on the $K-$theory of the Hilbert scheme of points on the plane, and we will show that Cherednik's superpolynomials can be computed in this representation. We use this viewpoint to prove two conjectures announced by Cherednik in \cite{Ch} (let us remark that while the present paper was being written, Cherednik also announced independent proofs in \cite[Section 2.4.1]{Ch}). The occurrence of the Hilbert scheme is not so surprising: Nakajima (\cite{Nak}) already used it to compute the matrices $S$ and $T$ of Aganagic and Shakirov, although by using a different construction from ours. 

An explicit description of the action of $P_{kn,km}$ on the $K-$theory of the Hilbert scheme was obtained in \cite{Negut}, where these operators were shown to be described by a certain geometric correspondence called the flag Hilbert scheme. We believe that a certain line bundle on this moduli space is related to the unique finite-dimensional irreducible module $L_{\frac mn}$ of the rational Cherednik algebra (see Conjecture \ref{conj:big} for a precise statement). A computational consequence of our approach is the following formula for uncolored superpolynomials as a sum over standard Young tableaux.

\begin{theorem}
\label{thm:main}
The superpolynomial $\CP_{n,m}(u,q,t)$, defined as in \cite{Ch}, is given by:
\begin{equation}
\label{eqn:for}
\CP_{n,m}(u,q,t) =\sum_{\mu \vdash n} \frac {\widetilde{\gamma}^n}{\widetilde{g}_\mu} \sum^{\syt}_{\text{of shape }\mu}  \frac {\prod_{i=1}^{n} \chi_i^{S_{m/n}(i)} (1-u\chi_i)(q \chi_i - t)}{\left(1-\frac {q\chi_{2}}{t\chi_{1}}\right) \ldots  \left(1-\frac {q\chi_{n}}{t\chi_{n-1}}\right)}\prod_{1\leq i < j\leq n} \frac {(\chi_j-q\chi_i)(t\chi_j-\chi_i)}{(\chi_j-\chi_i)(t\chi_j-q\chi_i)}
\end{equation}
where the sum is over all standard Young tableaux of size $n$, and $\chi_i$ denotes the $q,t^{-1}$-weight of the box labeled by $i$ in the tableau. The constants in the above relation are given by:
\begin{equation}
\label{def:smn}
S_{m/n}(i)=\left\lfloor \frac {im}n\right\rfloor - \left\lfloor \frac {(i-1)m}n \right\rfloor, \qquad \qquad \widetilde{\gamma}=\frac {(t-1)(q-1)}{(q-t)}
\end{equation}
and:
$$
\widetilde{g}_\mu =\prod_{\square \in \lambda}(1-q^{a(\square)}t^{l(\square)+1}) \prod_{\square \in \lambda}(1-q^{-a(\square)-1}t^{-l(\square)})
$$
The notions of arm-length $a(\square)$ and leg-length $l(\square)$ of a box in a Young diagram will be recalled in Figure \ref{fig}.
\end{theorem}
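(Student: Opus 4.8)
\section*{Proof proposal}

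The plan is to run the entire computation inside the representation of $\sh$ on $(\BC^*)^2$-equivariant $K$-theory of Hilbert schemes of points on $\BC^2$. By the theorem of Schiffmann and Vasserot (\cite{svhilb,svmacd}), Cherednik's elements $P_{kn,km}$ become the Burban--Schiffmann generators of the elliptic Hall algebra, which act on the Fock space $\bigoplus_{d\geq 0}K_{(\BC^*)^2}(\Hilb_d(\BC^2))$. First I would rewrite Cherednik's construction of the uncolored superpolynomial in this language: for $\lambda=\square$ the element $P_{n,m}^{\lambda}$ is just the degree-$(n,m)$ generator $P_{n,m}$, and the evaluation that defines $\CP_{n,m}(u,q,t)$ (the Macdonald specialization at one box, together with the HOMFLY variable $u$) should become a matrix coefficient of the shape $\langle\mathrm{vac}\,|\,\Lambda_u\,P_{n,m}\,|\,\mathrm{vac}\rangle$, where $|\mathrm{vac}\rangle$ is the class of the reduced point $\Hilb_0(\BC^2)$ and $\Lambda_u=\sum_{k\geq 0}(-u)^k\,[\,\textstyle\bigwedge^k\CT_d\,]$, for $\CT_d$ the tautological rank-$d$ bundle on $\Hilb_d(\BC^2)$, supplies the variable $u$. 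Identifying this matrix coefficient with Cherednik's evaluation, up to the overall constant $\widetilde{\gamma}^{\,n}$, is the first thing to settle.

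Next I would feed in the geometric description of $P_{n,m}$ from \cite{Negut}: the degree-$(n,m)$ operator is realized by a correspondence supported on the flag Hilbert scheme $\Fl_n$, the moduli space of full flags of ideals interpolating between the structure sheaf and a colength-$n$ ideal, with the two projections remembering the smallest and the largest member of the flag, and with a twist by line bundles whose exponents along the chain are exactly the jumps $S_{m/n}(i)=\lfloor im/n\rfloor-\lfloor(i-1)m/n\rfloor$ of the slope-$m/n$ staircase. Applying this operator to $|\mathrm{vac}\rangle$ produces a class on $\Hilb_n(\BC^2)$ obtained by integrating over the fibers of $\Fl_n\to\Hilb_n(\BC^2)$; this is the geometric avatar of the shuffle-algebra formula (and the reason the $\tfrac mn$ shuffle conjecture surfaces downstream).

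The main step is $(\BC^*)^2$-equivariant localization. The torus fixed points of $\Hilb_n(\BC^2)$ are the monomial ideals, indexed by $\mu\vdash n$; those of $\Fl_n$ lying over a given $\mu$ are precisely the standard Young tableaux of shape $\mu$, a fixed chain of monomial ideals recording the order in which the boxes of $\mu$ are added, and $\chi_i$ is the $(q,t^{-1})$-weight of the $i$-th box as in the theorem statement. The Lefschetz fixed-point formula then writes $\CP_{n,m}$ as the asserted double sum over $\mu$ and over $\syt$ of shape $\mu$, and it remains to match factors. The term $1/\widetilde{g}_\mu$ is the inverse $K$-theoretic Euler class of $T_\mu\Hilb_n(\BC^2)$, whose weights are $q^{a(\square)+1}t^{-l(\square)}$ and $q^{-a(\square)}t^{l(\square)+1}$; the denominators $\prod_i(1-q\chi_{i+1}/(t\chi_i))$ together with $\prod_{i<j}(\chi_j-q\chi_i)(t\chi_j-\chi_i)/[(\chi_j-\chi_i)(t\chi_j-q\chi_i)]$ assemble the contribution of the tangent space of $\Fl_n$ relative to $\Hilb_n(\BC^2)$ at the tableau; the factors $(q\chi_i-t)$ are the normal weights of the correspondence at each step; $\prod_i(1-u\chi_i)$ is the restriction of $\Lambda_u$; $\prod_i\chi_i^{S_{m/n}(i)}$ is the restriction of the line-bundle twist; and $\widetilde{\gamma}$ is the per-step normalization of the Burban--Schiffmann generator.

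The hard part will be the bookkeeping of normalizations and signs, rather than any single geometric input: one must reconcile Cherednik's evaluation and inner-product conventions, and the exact scaling of the $P_{n,m}$, with the geometric pairing on $K_{(\BC^*)^2}$, so that the constants emerge precisely as $\widetilde{\gamma}^{\,n}/\widetilde{g}_\mu$ and the tangent contributions split cleanly into the displayed product with no spurious factors; it is equally delicate to verify that the line bundle on $\Fl_n$ really contributes the exponents $S_{m/n}(i)$ rather than some shift thereof. A good consistency check is to specialize to $u=0$, or to $m=\pm1$, where \eqref{eqn:for} must collapse to the known localization formulas for the corresponding Macdonald-type quantities.
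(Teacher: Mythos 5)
Your proposal is correct and follows essentially the same route as the paper: recast the evaluation as the matrix coefficient $(\Lambda(u)\,|\,\TP_{n,m}\,|\,1)$ in the equivariant $K$-theory of the Hilbert scheme, realize $P_{n,m}$ via the flag Hilbert scheme correspondence of \cite{Negut}, and localize to torus fixed points, which over $\mu$ are exactly the standard Young tableaux of shape $\mu$. The paper quotes the resulting localization formula \eqref{eqn:fix3} from \cite{Negut} rather than re-deriving it, and handles the normalization bookkeeping you flag via the plethystic conjugation $\ph$ and the explicit change of variables $t\mapsto t^{-1}$ in \eqref{eqn:change}.
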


We also give a prescription to compute general colored superpolynomials $\CP_{n,m}^\lambda$, for example on a computer, although we do not yet have any ``nice'' formula. We use formulas such as \eqref{eqn:for} to explore many combinatorial consequences, such as to prove or formulate conjectures about $q,t-$Catalan numbers, parking functions and Tesler matrices in Section \ref{sec:comb}. The highlight is an $\frac mn$ version of the shuffle conjecture \cite{HHRLU}, where a certain combinatorial sum over parking functions in an $m \times n$ rectangle obtained by Hikita \cite{Hikita} is connected with the operators $P_{n,m}$ (see Conjecture \ref{conj:shuffle} for all details). A sample of these results is a corollary of Conjecture \ref{conj:shuffle}, which appears to be new and interesting by itself:   


\begin{conjecture}
\label{conj:super}
The ``superpolynomial'' $\CP_{n,m}(u,q,t)$ can be written as a following sum:
$$
\CP_{n,m}(u,q,t) = \sum_{D} q^{\delta_{m,n}-|D|}t^{-h_{+}(D)}\prod_{P\in v(D)}(1-ut^{\beta(P)}) 
$$
Here the summation is over all lattice paths $D$ contained below the main diagonal of an $m\times n$l, $P$ goes over all vertices of $D$, $\delta_{m,n}=\frac{(m-1)(n-1)}{2}$ and
$h_{+}(D)$ and $\beta(P)$ are certain combinatorial statistics  (see Section \ref{sec:catalan} for details).
\end{conjecture}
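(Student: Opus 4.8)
The plan is to deduce Conjecture~\ref{conj:super} from the $\frac mn$ shuffle conjecture (Conjecture~\ref{conj:shuffle}), which already expresses $\CP_{n,m}(u,q,t)$ as Hikita's weighted sum \cite{Hikita} over parking functions $\pf$ supported in the $m\times n$ rectangle and lying weakly below the main diagonal. In that sum the weight of a parking function has the shape $q^{\dinv(\pf)}\,t^{\,\text{area}(\pf)}$ times a factor recording the $u$--grading of the superpolynomial, which in the relevant normalization is a product of binomial terms $(1-u\cdot q^{\bullet}t^{\bullet})$, one for each north step of $\pf$ that is a ``valley'' (a car not forced to sit directly above a smaller car). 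The first task is to make this expansion explicit and to read off the $u$--weight of a single parking function purely in terms of Hikita's statistics.

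The second step is to reorganize the sum according to the underlying lattice path, writing $\sum_{\pf}=\sum_{D}\sum_{\pf\text{ on }D}$ with $D$ ranging over lattice paths weakly below the diagonal of the $m\times n$ rectangle. Because $\text{area}$ is a statistic of $D$ alone it factors out of the inner sum, and after matching normalizations---the rational sweep map relating the Hikita pair $(\dinv,\text{area})$ to the $(|D|,h_{+}(D))$ pair of Section~\ref{sec:catalan}---this produces the prefactor $q^{\delta_{m,n}-|D|}t^{-h_{+}(D)}$, with $|D|$ the number of full cells below $D$. It then remains to evaluate the inner sum over all admissible labelings of the north steps of $D$, weighted by $q^{\dinv}$ and the $u$--factor.

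The heart of the argument is the identity that this inner sum collapses to the product $\prod_{P\in v(D)}(1-ut^{\beta(P)})$ over the distinguished vertices of $D$. I would prove it by a recursive analysis, peeling off the rows of $D$ one at a time: deleting the bottom row sends $D$ to a smaller path $D'$, and by tracking how $\dinv$, the $u$--factor and the set of admissible labelings change under this operation one obtains a recursion for the inner sum whose unique solution is the claimed product, the new binomial factor $(1-ut^{\beta(P)})$ being supplied by the vertices created along the bottom row. Specializing to $u=0$ gives the consistency check $\CP_{n,m}(0,q,t)=\sum_{D}q^{\delta_{m,n}-|D|}t^{-h_{+}(D)}$, i.e. the rational $q,t$--Catalan number. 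The main obstacle is precisely this collapse: $\dinv$ is a global statistic counting pairs of cars, so one must show that once it is summed over all labelings of a fixed path and coupled with the $u$--factor its $q$--dependence degenerates to the single monomial $q^{\delta_{m,n}-|D|}$; carrying this out will likely require either exploiting the recursion hidden in Hikita's construction or building a sign--reversing involution on labelings that annihilates every term except the asserted product. A secondary difficulty is bookkeeping: pinning down the definitions of $h_{+}(D)$ and $\beta(P)$ from Section~\ref{sec:catalan}, together with the $\delta_{m,n}$--shift, so that they match the output of this computation term by term; it would also be worthwhile to attempt an unconditional proof by transforming the standard-tableau sum of Theorem~\ref{thm:main} directly into this lattice-path sum, though the bijective combinatorics there seems at least as delicate.
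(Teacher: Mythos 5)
The first thing to note is that the statement you are addressing is a \emph{conjecture}: the paper does not prove it in general, and neither do you. Your plan is to deduce Conjecture \ref{conj:super} from Conjecture \ref{conj:shuffle}, but the latter is itself open --- the paper only remarks that \ref{conj:super} \emph{would be} a corollary of \ref{conj:shuffle} --- so even a complete execution of your outline would establish an implication between two conjectures rather than the statement itself. Moreover, the implication is not actually executed: the step you yourself flag as ``the heart of the argument,'' namely that the inner sum over all labelings of a fixed path, paired against the $u$-grading (i.e.\ against $e_{n-d}h_d$ via the quasisymmetric functions $Q_{\Des(F)}$ in \eqref{eq:def Fr}), collapses to $t^{-h_{+}(D)}\prod_{P\in v(D)}(1-ut^{\beta(P)})$, is precisely the nontrivial combinatorial content, and you only gesture at a recursion or a sign-reversing involution without constructing either. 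There is also a bookkeeping slip: in Hikita's formula \eqref{eq:def Fr} the power of $q$ is $\delta_{m,n}-|D|$, already a statistic of the path alone, while $\dinv$ carries the power of $t$; so it is the $t$-dependence of the inner sum, not the $q$-dependence, that has to be controlled.

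For comparison, what the paper actually proves is only the $t=1$ specialization (stated there as Conjecture \ref{conj:schroeder}, the version for the modified superpolynomial $\TCP_{n,m}$), and by a completely different route: starting from the contour-integral formula \eqref{eqn:resknot} it rewrites $\TCP_{n,m}$ as a sum over Tesler matrices \eqref{eqn:tes1}, observes that at $t=1$ only quasi-diagonal Tesler matrices survive because $A(x)|_{t=1}=\delta_x^0$, and identifies those with Dyck paths, the factor $(1-u)$ appearing once per corner of the path. No parking functions or shuffle-conjecture input enters. If you want an unconditional result, the realistic targets are this $t=1$ degeneration, the $t=q^{-1}$ degeneration of Section \ref{sec:CY limit}, or the case $m=n+1$ (Haglund's $q,t$-Schr\"oder theorem combined with Corollary \ref{pn1}); for general coprime $m,n$ and general $t$ the statement remains open, and a proof would have to supply either the shuffle conjecture itself or an independent evaluation of the matrix coefficient $(\Lambda(u)|\TP_{n,m}|1)$.
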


For $m=n+1$, the above identity was conjectured in \cite{EHKK} and proved in \cite{HSchroeder}. In the present paper, we prove this conjecture in the limit $t=1$.
For many values of $m$ and $n$, the above conjecture has been verified on a computer. 

We also note that Conjecture \ref{conj:super} can be regarded as a refinement of \cite[Conjectures 23,24]{ORS}. Indeed, in \cite{ORS} the authors conjectured
a relation between the Khovanov-Rozansky homology of an algebraic knot and the homology of the Hilbert schemes of points on the corresponding plane curve singularity.
For torus knots, these Hilbert schemes admit pavings by affine cells, and the homology can be computed combinatorially by counting these cells weighted with their dimensions.
It has been remarked in \cite[Appendix A.3]{ORS} that the corresponding combinatorial sum can be rewritten as a sum over lattice paths matching the combinatorial side
of Conjecture \ref{conj:super}. Furthermore, it has been conjectured in \cite[Conjectures 23,24]{ORS}
that the Poincar\'e polynomial for the Khovanov-Rozansky homology of torus knots can be rewritten as an equivariant character of the space of sections of a certain sheaf
on the Hilbert scheme of points on $\BC^2$, and this sheaf was written explicitly for $m=kn\pm 1$. For general $m$, a construction of such a sheaf or its class in the equivariant $K$-theory was not accessible by the methods of \cite{ORS} 
(see \cite[p. 24]{ORS} for the extensive computations).

In the present paper, we present a (conjectural) candidate for such a sheaf for all $m$ coprime with $n$, using the geometric realization of $\CP_{n,m}(u,q,t)$.
To summarize, one can say that Conjecture \ref{conj:super} would imply the agreement between the Aganagic-Shakirov and Oblomkov-Rasmussen-Shende conjectural descriptions of HOMFLY homology of torus knots,
though, indeed, the relation between both of these descriptions to the actual definition of \cite{KhR2,KhSoergel} remains unknown. 

Another conjecture relates the operator $P_{m,n}$ to the finite-dimensional representation $L_{\frac mn}$ of the rational Cherednik algebra with parameter $c=\frac{m}{n}$
equipped with a certain filtration defined in \cite{GORS}. Such a representation is naturally graded and carries an action of symmetric group $S_n$ preserving both the grading and the filtration. 

\begin{conjecture}
The bigraded Frobenius character of $L_{\frac mn}$, equipped with the natural grading and extra filtration (defined in \cite{GORS}) equals $P_{m,n}\cdot 1$.
\end{conjecture}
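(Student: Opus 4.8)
The plan is to route both sides of the identity through the combinatorial ``rational shuffle'' model and through the geometry of the flag Hilbert scheme that already underlies \eqref{eqn:for}. On the representation-theoretic side, $L_{m/n}$ is a graded $S_n$-module of dimension $m^{n-1}$, and the extra filtration of \cite{GORS} upgrades its graded Frobenius character to a bigraded symmetric function of degree $n$; forgetting the filtration recovers the single internal grading, whose $S_n$-equivariant character is computed by the classical formulas of Berest--Etingof--Ginzburg and Gordon. On the other side, $P_{m,n}\cdot 1$ is the class in the localized $K$-theory of $\Hilb^n(\BC^2)$ produced by Negut's flag Hilbert scheme correspondence, and the computation of \cite{Negut} leading to \eqref{eqn:for} presents it as a pushforward of an explicit line bundle $\CL$ on the nested (flag) Hilbert scheme. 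The conjecture thus asserts that the bigraded character of $L_{m/n}$ equals $\chi(\CL)$, and I would prove it by showing that \emph{both} sides coincide with Hikita's combinatorial sum over parking functions in the $m\times n$ rectangle, i.e.\ with the right-hand side of Conjecture~\ref{conj:shuffle}.

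Concretely, the steps are as follows. (1) First establish the single-graded statement: the $S_n$-equivariant graded character of $L_{m/n}$, obtained by collapsing the \cite{GORS} filtration, equals the appropriate one-variable specialization (e.g.\ $t=1$) of $P_{m,n}\cdot 1$. This is a principal-specialization computation — the degeneration of \eqref{eqn:for} collapses the sum over standard Young tableaux of shape $\mu$ to hook-length data, which one matches against the known character of $L_{m/n}$ and the dimension count $\dim L_{m/n}=m^{n-1}$. (2) Next, interpret the \cite{GORS} filtration geometrically. Using a Gordon--Stafford--type derived equivalence between modules over the rational Cherednik algebra with parameter $c=m/n$ and sheaves on $\Hilb^n(\BC^2)$, identify $L_{m/n}$ with the sections of Negut's line bundle $\CL$ on the flag Hilbert scheme, matching the filtration degree of \cite{GORS} with the extra $\BZ$-grading carried by the $K$-theoretic construction; this is the geometric heart of the matter, as suggested by Conjecture~\ref{conj:big} and the line bundle discussion surrounding it. (3) Finally, compute $\chi(\CL)$ by equivariant localization on the flag Hilbert scheme — which by \cite{Negut} reproduces the right-hand side of \eqref{eqn:for} and, via Conjecture~\ref{conj:shuffle}, Hikita's parking-function sum — and match it term by term with the bigraded character of $L_{m/n}$ read off from the filtration, for instance through the quasisymmetric fundamental expansion.

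The main obstacle is step (2): the \cite{GORS} filtration is defined through the perverse (weight) filtration on the cohomology of the family of Hilbert schemes of points on the \emph{singular} plane curve $\{x^m = y^n\}$, whereas \eqref{eqn:for} lives on the smooth variety $\Hilb^n(\BC^2)$ and its flag version. Bridging the two requires controlling the degeneration of $\Hilb^n(\BC^2)$ to the compactified Jacobian of the curve singularity and comparing the rational Cherednik module structure (through Gordon--Stafford) with Negut's correspondence — essentially the content of Conjecture~\ref{conj:big}, for which no closed-form comparison is presently available. A secondary obstacle is that \eqref{eqn:for} and its colored analogue only pin down $P_{m,n}\cdot 1$ combinatorially once the $\frac mn$ shuffle conjecture (Conjecture~\ref{conj:shuffle}) is established in general; at present this is known only in special cases (for instance $m\equiv\pm 1\bmod n$) and in the $t=1$ limit (cf.\ the discussion after Conjecture~\ref{conj:super}), so unconditionally this strategy yields the single-graded statement of (1) together with the $t=1$ specialization of the full bigraded identity.
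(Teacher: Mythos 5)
The statement you are asked to prove is, in fact, left as a \emph{conjecture} in the paper (it is restated as Conjecture \ref{conj:frob} in Section \ref{sec:rat}); the authors offer no proof, only supporting evidence: the case $m=kn\pm 1$ via Gordon--Stafford, the case $m=n+1$ via Theorem \ref{thm:gordon} together with Corollary \ref{pn1}, the specialization $t=q^{-1}$ via \cite{BEG} and \cite{GORS} (carried out in Section \ref{sec:CY limit}), and the compatibility of both sides with conjugation by $\nabla$, i.e.\ \eqref{eqn:nabla} versus the Gordon--Stafford identity $\ch_{q,t}\gr L_{\frac{m+n}{n}}=\nabla\,\ch_{q,t}\gr L_{\frac{m}{n}}$. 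Your proposal does not close this gap: its ``geometric heart,'' step (2), is precisely Conjecture \ref{conj:big} (that the Gordon--Stafford functor sends $L_{\frac mn}$ to $p_*(\CL_n^{S_{m/n}(1)}\otimes\cdots\otimes\CL_1^{S_{m/n}(n)})$), which is itself open, and step (3) additionally invokes the rational shuffle conjecture (Conjecture \ref{conj:shuffle}), also open. So what you have written is a conditional reduction --- essentially the same reduction the paper records when it observes that Conjecture \ref{conj:big} implies Conjecture \ref{conj:frob} --- rather than a proof. To be useful you should state it as such, and be explicit about which implications are unconditional.

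Two smaller corrections. First, the specialization that recovers the \emph{single} internal grading of $L_{\frac mn}$ (collapsing the \cite{GORS} filtration) is $t=q^{-1}$, not $t=1$: this is the content of the last proposition of the paper, where $\TCP_{n,m}(0,q,q^{-1})$ is matched with the graded character from \cite{BEG}. The $t=1$ degeneration proved in the paper concerns only the scalar Catalan/Schr\"oder identities (Conjectures \ref{conj:catalan} and \ref{conj:schroeder}), not the full symmetric-function identity of Conjecture \ref{conj:shuffle}, so your claim that the strategy unconditionally yields ``the $t=1$ specialization of the full bigraded identity'' overstates what is available. Second, even granting Conjecture \ref{conj:shuffle}, matching Hikita's sum with the bigraded character of $\gr L_{\frac mn}$ requires the further conjectural identification (discussed in Section \ref{sec:affine flag}) of that character with $\gr H^{*}(\Sigma_{\frac mn})$ after regrading, together with the compatibility of the geometric filtration with the cell basis --- another link in the chain that is currently conjectural.
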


This conjecture has bee mainly motivated by \cite{GORS} where $L_{\frac mn}$  has been related to the Hilbert schemes on the
singular curve $\{x^m=y^n\}$ and to the knot homology. For $m=kn\pm 1$, the conjecture follows from the results of \cite{GS,GS2}.


The structure of this paper is the following: in Section \ref{sec:sym}, we recall the basics on symmetric functions, Macdonald polynomials, the double affine Hecke algebra, we state Cherednik's conjectures \ref{conj1} and \ref{conj2}, and recall how they relate to the original construction of Aganagic and Shakirov in Chern-Simons theory. In Section \ref{sec:stab}, we use the stabilization procedure of Schiffmann-Vasserot to prove Cherednik's conjectures by recasting his superpolynomials as matrix coefficients. In Section \ref{sec:hilb}, we discuss the Hilbert scheme and the flag Hilbert scheme, and show how the machinery of \cite{Negut} gives new formulas for torus knot invariants. In Section \ref{sec:rat}, we discuss the connection between the Hilbert scheme and the rational Cherednik algebra, and conjecture that a certain line bundle on the flag Hilbert scheme corresponds to the representation $L_{\frac mn}$ under the Gordon-Stafford functor. Finally, in Section \ref{sec:comb}, we present certain aspects from the combinatorics of symmetric functions which arise in connection to our work, state several conjectures about $q,t-$Catalan numbers, parking functions and Tesler matrices, which we prove in several special cases.

\section*{Acknowledgments}
We are deeply grateful to Andrei Okounkov, who spurred this paper by explaining to us the relation between the earlier results of the second author to the work of the first author on knot invariants. He has helped us greatly with a lot of advice, and many interesting and educating discussions. We are grateful to M. Aganagic, F. Bergeron, R. Bezrukavnikov, I. Cherednik, P. Etingof, A. Garsia, I. Gordon, J. Haglund, M. Haiman,  A. Kirillov Jr., I. Losev, M. Mazin, H. Nakajima, N. Nekrasov, A. Oblomkov, J. Rasmussen,  S. Shadrin, S. Shakirov, A. Smirnov, A. Sleptsov and V. Shende for their interest and many useful discussions. Special thanks to Maxim Kazaryan for helping us with the {\tt Mathematica} code. The work of E. G. was partially supported by the NSF grant DMS-1403560, RFBR grants RFBR-10-01-678, RFBR-13-01-00755  and the Simons foundation.

\section{DAHA and Macdonald polynomials}
\label{sec:sym}

\subsection{Symmetric functions}
\label{sub:symfunc}

Consider formal parameters $q$ and $t$, and let us define the following ring of constants and its field of fractions:
\begin{equation}
\label{eqn:rings}
\BK_0 = \BC[q^{\pm 1},t^{\pm 1}], \qquad \qquad \BK = \BC(q,t) 
\end{equation}
Among our basic objects of study will be the algebras of symmetric polynomials:
$$
V = \BK[x_1,x_2,\ldots]^\sym, \qquad \qquad V_N = \BK[x_1,\ldots,x_N]^\sym
$$
In fact, the algebras $V_N$ form a projective system $V_N \longrightarrow V_{N-1}$, with the maps given by setting $x_N=0$, and the inverse limit of the system is $V$. An important system of generators for these vector spaces consists of power-sum functions $p_k = \sum_i x_i^k$, for which we have:
$$
V = \BK[p_1,p_2,\ldots], \qquad \qquad V_N = \BK[p_1,\ldots,p_N]
$$
A linear basis of $V$ is given by: 
$$
p_\lambda = p_{\lambda_1} p_{\lambda_2}\ldots, \qquad \text{as} \quad \lambda = (\lambda_1 \geq \lambda_2\geq\ldots)
$$
go over all integer partitions. We have the scalar product $\langle \cdot, \cdot \rangle$ on $V$ given by:
\begin{equation}
\label{eqn:innerp}
\langle p_{\lambda},p_{\mu}\rangle = \delta_{\lambda}^{\mu} z_{\lambda} \qquad \forall \ \lambda,\mu
\end{equation}
where $z_{1^{n_1}2^{n_2}\ldots} = \prod_{i\geq 1} i^{n_i} n_i!$. Then another very important basis of $V$ is given by the Schur functions $s_\lambda$, which are orthogonal under the above scalar product and satisfy:

$$
s_\lambda = m_\lambda + \sum_{\mu < \lambda} c_\lambda^\mu m_\mu \qquad \textrm{for some } c^\mu_\lambda \in \BZ
$$
where $m_\lambda = \sym \left(z_1^{\lambda_1}z_2^{\lambda_2}\ldots \right)$ are the monomial symmetric functions, and $<$ denotes the dominance partial ordering on partitions: $\mu \leq \lambda$ if $\mu_1+\ldots+\mu_i \leq \lambda_1+\ldots+\lambda_i$ for all $i\geq 1$.

\subsection{Macdonald polynomials} 

Another remarkable inner product on $V$ was introduced by Macdonald \cite{Macdonald}:
\begin{equation}
\label{eqn:innerpqt}
\langle p_{\lambda},p_{\mu}\rangle_{q,t}= \delta_{\lambda}^{\mu} z_{\lambda}\prod_{i}\frac{1-q^{\lambda_{i}}}{1-t^{\lambda_{i}}} \qquad \forall \ \lambda,\mu
\end{equation}
The Macdonald polynomials $P_\lambda$ are defined by the property of being orthogonal with respect to $\langle \cdot, \cdot \rangle_{q,t}$ and upper triangular in the basis of monomial symmetric functions:
$$
P_\lambda = m_\lambda + \sum_{\mu < \lambda} d^\mu_\lambda m_\mu \qquad \textrm{for some } d^\mu_\lambda \in \BK
$$
The square norm of $P_{\lambda}$ is given by:
$$
\langle P_{\lambda},P_{\lambda}\rangle_{q,t} = \frac{h'_{\lambda}}{h_{\lambda}},
$$
where:

$$
h_{\lambda} = \prod_{\square \in \lambda}(1-q^{a(\square)}t^{l(\square)+1}), \qquad  h'_{\lambda}=\prod_{\square \in \lambda}(1-q^{a(\square)+1}t^{l(\square)})
$$
Here $\square$ goes over all the boxes in the Young diagram associated to the partition $\lambda$. The arm-length $a(\square)$ (respectively, the leg-length $l(\square)$) is defined as the number of boxes above (respectively, to the right) of the box $\square$. For illustration, see Figure \ref{fig}.

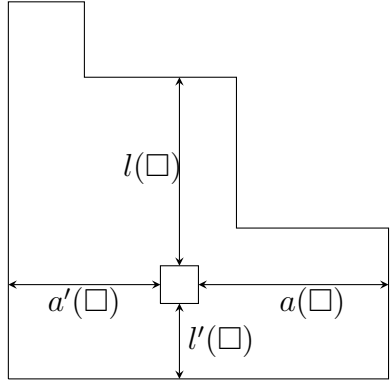
\begin{figure}[ht]
\begin{tikzpicture}
\draw (0,0)--(0,5)--(1,5)--(1,4)--(3,4)--(3,2)--(5,2)--(5,0)--(0,0);
\draw (2,1)--(2,1.5)--(2.5,1.5)--(2.5,1)--(2,1);
\draw (2.25,1.25) node {};
\draw [<->,>=stealth] (2.5,1.25)--(5,1.25);
\draw [<->,>=stealth] (0,1.25)--(2,1.25);
\draw [<->,>=stealth] (2.25,1.5)--(2.25,4);
\draw [<->,>=stealth] (2.25,0)--(2.25,1);
\draw (4,1) node {$a(\square)$};
\draw (1,1) node {$a'(\square)$};
\draw (1.9,2.8) node {$l(\square)$};
\draw (2.8,0.5) node {$l'(\square)$};
\end{tikzpicture}
\caption{Arm, leg, co-arm and co-leg}
\label{fig}
\end{figure}


 







There are various normalizations of Macdonald polynomials, and we will encounter their integral form $J_{\lambda}=h_{\lambda}P_{\lambda}$. We have:
\begin{equation}
\label{eqn:normj}
\langle J_{\lambda},J_{\lambda}\rangle_{q,t} = h_{\lambda}^2\langle P_{\lambda},P_{\lambda}\rangle_{q,t} =h_{\lambda} h'_{\lambda}.
\end{equation}
All these constructs make sense both in $V$ (infinitely many variables) and in $V_N$ (finitely many variables), and they are compatible under the maps $V \longrightarrow V_N$. Let us now focus on the case of finitely many variables. Fix a positive integer $N$ and let: 
$$
\rho_N=(N-1,N-2,\ldots,1,0)
$$ 
Consider the evaluation homomorphism:
\begin{equation}
\label{eqn:eval}
\e_N: V_N \longrightarrow \BK, \qquad \e_N(f) = f(t^{\rho}) = f(t^{N-1},t^{N-2},\ldots,1)
\end{equation}
A simple computation reveals that:
\begin{equation}
\label{power sum eval}
\e_N(p_{k}) = \frac{1-t^{kN}}{1-t^k}=\frac{1-u^{k}}{1-t^{k}},
\end{equation}
where we capture the $N$-dependence in the new variable $u=t^{N}$. Therefore, one can compute $\e_N(f)$ for a general symmetric function $f$ by expanding it in terms of the $p_{k}$ and then using \eqref{power sum eval}.

\begin{theorem}(\cite[eq. VI.6.17 and VI.8.8]{Macdonald})
\label{thm:jeval} 
The following equations hold:
\begin{equation}
\label{J eval}
\e_N(P_{\lambda}) = \frac 1{h_\lambda} \prod_{\square\in \lambda} (t^{l'(\square)}-uq^{a'(\square)})
 \qquad \Longrightarrow \qquad \e_N(J_{\lambda}) = \prod_{\square\in \lambda}\left(t^{l'(\square)}-uq^{a'(\square)}\right)
\end{equation}
where $a'(\square)$ (respectively, $l'(\square)$) denote the co-arm and co-leg of $\square$ in $\lambda$ (see Figure \ref{fig}).
\end{theorem}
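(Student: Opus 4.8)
The plan is to prove the first identity, $\e_N(P_{\lambda}) = h_\lambda^{-1}\prod_{\square\in\lambda}(t^{l'(\square)}-uq^{a'(\square)})$; the second then follows instantly from $J_\lambda=h_\lambda P_\lambda$. Recalling $u=t^N$ and writing $t^{l'(\square)}-uq^{a'(\square)} = t^{l'(\square)}(1-q^{a'(\square)}t^{N-l'(\square)})$ together with $\prod_{\square\in\lambda}t^{l'(\square)} = t^{n(\lambda)}$, where $n(\lambda)=\sum_{\square\in\lambda}l'(\square)$, the claim is seen to be equivalent to Macdonald's principal specialization formula
\begin{equation*}
P_\lambda(1,t,\dots,t^{N-1};q,t) = t^{n(\lambda)}\prod_{\square\in\lambda}\frac{1-q^{a'(\square)}t^{N-l'(\square)}}{1-q^{a(\square)}t^{l(\square)+1}},
\end{equation*}
so one honest option is simply to invoke \cite{Macdonald}. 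For a self-contained argument I would induct on the number of variables $N$.

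The engine is Macdonald's branching (restriction) rule: in $N$ variables,
\begin{equation*}
P_\lambda(x_1,\dots,x_N) = \sum_{\mu}\psi_{\lambda/\mu}(q,t)\,x_N^{|\lambda|-|\mu|}\,P_\mu(x_1,\dots,x_{N-1}),
\end{equation*}
the sum running over partitions $\mu$ with $\lambda/\mu$ a horizontal strip and $\psi_{\lambda/\mu}(q,t)$ Macdonald's explicit coefficient, a finite product of ratios of the quantities $b_\nu(\square)=\frac{1-q^{a_\nu(\square)}t^{l_\nu(\square)+1}}{1-q^{a_\nu(\square)+1}t^{l_\nu(\square)}}$ attached to boxes in the rows and columns meeting $\lambda/\mu$. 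Specializing $x_N=1$ and $x_i=t^{N-i}$ for $i<N$, and using the homogeneity of $P_\mu$ to write $P_\mu(t^{N-1},\dots,t) = t^{|\mu|}\e_{N-1}(P_\mu)$, yields the recursion
\begin{equation}
\e_N(P_\lambda) = \sum_{\mu}\psi_{\lambda/\mu}(q,t)\,t^{|\mu|}\,\e_{N-1}(P_\mu).\tag{$\star$}
\end{equation}
For $N=1$ one has $P_\lambda(x_1)=x_1^{\lambda_1}$ when $\lambda$ is a single row and $P_\lambda(x_1)=0$ otherwise, so $\e_1(P_\lambda)$ equals $1$ for one-row $\lambda$ and $0$ else; a short check (numerator and denominator cancel once $u=t$, resp. the first-column factor $t-u$ vanishes) confirms agreement with the claimed formula. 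It is also reassuring that the claimed right-hand side vanishes at $u=1,t,\dots,t^{\ell(\lambda)-1}$ — precisely through the first-column boxes $(i,1)$, whose factor is $t^{i-1}-u$ — matching the known vanishing $\e_N(P_\lambda)=0$ for $N<\ell(\lambda)$.

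Feeding the inductive hypothesis into $(\star)$ reduces everything to the purely combinatorial identity
\begin{equation*}
\sum_{\mu}\psi_{\lambda/\mu}(q,t)\,t^{|\mu|}\,\frac{1}{h_\mu}\prod_{\square\in\mu}(t^{l'(\square)}-t^{N-1}q^{a'(\square)}) = \frac{1}{h_\lambda}\prod_{\square\in\lambda}(t^{l'(\square)}-t^{N}q^{a'(\square)}),
\end{equation*}
to be verified by clearing denominators and matching the rational factors box by box — separating the boxes of $\lambda$ and of each $\mu$ according to whether their row or column meets the strip $\lambda/\mu$, and using the standard arm/leg relations across a horizontal strip. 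This bookkeeping is the one genuinely laborious step: it is elementary but intricate, and is in essence the heart of Macdonald's own proof. An alternative that avoids induction on the number of variables is to induct on $|\lambda|$ via the Pieri rule for multiplication by $p_1=e_1$: applying $\e_N$ and using $\e_N(p_1)=\frac{1-u}{1-t}$ from \eqref{power sum eval} turns the Pieri expansion into a dual family of identities among the coefficients $\varphi_{\lambda/\mu}$, which one verifies in the same box-by-box manner. Either route trades the structural work for the same combinatorial core, which is where I expect the main difficulty to lie.
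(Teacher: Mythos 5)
Your reduction is correct: after writing $t^{l'(\square)}-uq^{a'(\square)}=t^{l'(\square)}(1-q^{a'(\square)}t^{N-l'(\square)})$ and pulling out $t^{n(\lambda)}$, the claim is exactly Macdonald's principal specialization formula VI.6.17, and the paper itself offers no proof beyond that citation, so your ``honest option'' of invoking \cite{Macdonald} is precisely the paper's approach. Your sketched induction via the branching rule is the standard textbook route, and you correctly identify (without carrying out) the box-by-box combinatorial identity that constitutes its only nontrivial step.
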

 
\subsection{Double affine Hecke algebras}
 
Following \cite{ChBook}, we will define the double affine Hecke algebra (DAHA) of type $A_N$.

\begin{definition}
The algebra $\mathbf{H}_{N}$ is defined over $\BK$ by generators $T_i^{\pm 1}$ for $i \in \{1,\ldots,N-1\}$, and ${X_j}^{\pm 1}, {Y_j}^{\pm 1}$ for  $j\in \{1,\ldots,N\}$, under the following relations:

\begin{multline}
\label{DAHA relations}
(T_i+t^{-1/2})(T_i-t^{1/2})=0,\qquad T_{i}T_{i+1}T_{i}=T_{i+1}T_{i}T_{i+1},\qquad [T_i,T_k]=0\ \mbox{for}\ |i-k|>1\\ 
T_iX_iT_i=X_{i+1},\qquad \qquad {T_i}^{-1}Y_i{T_{i}}^{-1}=Y_{i+1}\\
[T_i,X_k]=0,\qquad [T_i,Y_k]=0\ \mbox{for}\ |i-k|>1\\
[X_j,X_k]=0,\qquad [Y_j,Y_k]=0, \qquad Y_1X_1\ldots X_N=qX_1\ldots X_NY_1,$$
$${X_1}^{-1}Y_2=Y_2{X_1}^{-1}{T_1}^{-2} \\
\end{multline} 
\end{definition}

The algebra $\mathbf{H}_N$ contains the Hecke algebra generated by $T_i$, and two copies of the affine Hecke algebra generated by $(T_i,X_j)$ and $(T_i,Y_j)$, respectively. The basic module of $\h_N$ is the polynomial representation:
$$
\h_N \longrightarrow \textrm{End}(\BC[X_1,\ldots, X_N]), 
$$
The element $X_i$ acts as multiplication by $x_i$, and $T_i$ acts by the Demazure-Lusztig operator:
$$
T_i=t^{1/2}s_{i}+(t^{1/2}-t^{-1/2}) \frac {s_i-1}{x_i/x_{i+1}-1}
$$
where $s_i=(i , i+1)$ are the simple reflections. Let us define the operators $\partial_i$ on $\BC[X_1,\ldots, X_N])$ by:
$$
\partial_{i}(f)=f(x_1,\ldots,x_{i-1},qx_i,x_{i+1},\ldots x_N).
$$
and introduce the operator $\gamma=s_{N-1}\cdots s_{1}\partial_{1}$ 
Set:
$$
Y_i=t^{\frac {N-1}2} T_{i}\cdots T_{N-1}\gamma T_{1}^{-1}\cdots T_{i-1}^{-1}.
$$
Then the operators  $T_i, X_j, Y_j$ on $V_N$ satisfy the relations of the DAHA. A priori, these operators do not necessarily send the subspace of symmetric polynomials $V_N$ to itself. However, it is well-known that for any symmetric polynomial $f$, we have:
$$
f(X_1,\ldots,X_n):V_N \rightarrow V_N, \qquad f(Y_1,\ldots,Y_n):V_N \rightarrow V_N.
$$ 
We will need the following result of Macdonald:

\begin{proposition}(\cite[eq. VI.3.4]{Macdonald})
On the subspace $V_N$ of symmetric polynomials, the operator $\delta_1:=Y_1+\ldots+Y_N$ can be rewritten as:
\begin{equation}
\label{delta1}
\delta_1=\sum_{i}A_i\partial_i, \qquad \text{where} \quad A_i=\prod_{j\neq i}\frac{tx_i-x_j}{x_i-x_j}. 
\end{equation}
\end{proposition}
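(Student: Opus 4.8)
The plan is to establish the operator identity $\delta_1 = Y_1 + \cdots + Y_N = \sum_i A_i \partial_i$ on $V_N$ by a direct computation that tracks how the raw Demazure–Lusztig expressions for the $Y_i$ collapse once restricted to symmetric polynomials. First I would substitute the definition $Y_i = t^{(N-1)/2}\, T_i \cdots T_{N-1}\, \gamma\, T_1^{-1} \cdots T_{i-1}^{-1}$ and use the key simplification that occurs on symmetric functions: since the $T_j$ act by Demazure–Lusztig operators $T_j = t^{1/2} s_j + (t^{1/2} - t^{-1/2}) \tfrac{s_j - 1}{x_j/x_{j+1} - 1}$, and a symmetric polynomial is fixed by every $s_j$, one finds that $T_j$ acts on symmetric inputs as multiplication-type operators; more precisely, the inverse affine generators $T_1^{-1} \cdots T_{i-1}^{-1}$ applied to $f \in V_N$ can be evaluated explicitly, and the remaining product $T_i \cdots T_{N-1}$ composed with $\gamma = s_{N-1} \cdots s_1 \partial_1$ telescopes.

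The key steps, in order, would be: (1) compute $\gamma(f)$ for $f$ symmetric — since $\partial_1$ shifts $x_1 \mapsto q x_1$ and then $s_{N-1} \cdots s_1$ cycles the variables, $\gamma$ sends $f(x_1, \ldots, x_N)$ to $f(x_2, \ldots, x_N, q x_1)$, a shift operator; (2) determine the action of $T_i \cdots T_{N-1}$ on such an expression, peeling off one Demazure–Lusztig operator at a time and collecting the rational coefficients $\tfrac{t x_i - x_j}{x_i - x_j}$ that appear; (3) symmetrize — after summing over $i = 1, \ldots, N$ and using that the total result must land in $V_N$, the non-symmetric pieces cancel and what survives is precisely $\sum_i A_i \partial_i$ with $A_i = \prod_{j \neq i} \tfrac{t x_i - x_j}{x_i - x_j}$. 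Alternatively, and perhaps more cleanly, I would verify the identity by checking that both sides agree on the Macdonald polynomials $P_\lambda$: the right-hand side $\sum_i A_i \partial_i$ is the classical Macdonald operator whose eigenvalue on $P_\lambda$ is $\sum_i q^{\lambda_i} t^{N-i}$ (this is Macdonald's original definition of $P_\lambda$), while the left-hand side $Y_1 + \cdots + Y_N$ is known from DAHA theory to act on $P_\lambda$ with the same eigenvalue $\sum_i q^{\lambda_i} t^{N-i}$ (up to the normalization absorbed in the $t^{(N-1)/2}$ prefactors); since the $P_\lambda$ form a basis of $V_N$ and the eigenvalues are distinct, the operators coincide.

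I expect the main obstacle to be step (2): carefully bookkeeping the rational function coefficients that accumulate as the chain $T_i \cdots T_{N-1}$ acts, since each Demazure–Lusztig operator introduces a term of the form $(t^{1/2} - t^{-1/2})/(x_j/x_{j+1} - 1)$ and one must check that the cross-terms involving the difference operator $s_j - 1$ acting on already-shifted arguments combine correctly rather than proliferating. The eigenvalue approach sidesteps this entirely, but requires citing (or re-deriving) the spectral decomposition of $\delta_1$ on the DAHA side, which is standard but external to this excerpt. In either route, the final identification of $\prod_{j \neq i} \tfrac{t x_i - x_j}{x_i - x_j}$ as the coefficient of $\partial_i$ follows from matching residues at $x_i = x_j$, and the passage from $V_N$-as-a-subspace back to the intrinsic statement uses the well-known fact (quoted in the excerpt) that $f(Y_1, \ldots, Y_N)$ preserves $V_N$.
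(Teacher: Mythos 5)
The paper does not actually prove this proposition --- it cites it to Macdonald (eq.\ VI.3.4), where $\sum_i A_i \partial_i$ is Macdonald's operator $D_N^1$, and the identification with $\sum_i Y_i$ is the standard DAHA computation found in Cherednik's book. So there is no in-paper argument to compare against; judged on its own terms, your plan is sound and your first route (unwinding $Y_i = t^{(N-1)/2} T_i\cdots T_{N-1}\gamma T_1^{-1}\cdots T_{i-1}^{-1}$ on symmetric inputs) is essentially the classical proof. You correctly identify the two real difficulties: the individual $Y_i$ do \emph{not} preserve $V_N$, so the non-symmetric terms must be shown to cancel only after summing over $i$; and the coefficients $\prod_{j\neq i}\frac{tx_i-x_j}{x_i-x_j}$ emerge from the accumulated Demazure--Lusztig factors. (A small convention point: with the paper's definitions $\gamma f = f(qx_N,x_1,\dots,x_{N-1})$ rather than $f(x_2,\dots,x_N,qx_1)$, though on symmetric $f$ both reduce to a single $q$-shift; this matters once the intermediate inputs are no longer symmetric.)

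Your second route (matching eigenvalues on the basis $\{P_\lambda\}$) is also valid, but carries one caveat worth making explicit: in the paper's own exposition, the statement $\delta_1 P_\lambda = (\sum_i t^{N-i}q^{\lambda_i})P_\lambda$ is presented as a \emph{consequence} of \eqref{delta1} together with Macdonald's definition of $P_\lambda$ as eigenfunctions of $D_N^1$. To avoid circularity you must obtain the spectrum of $\sum_i Y_i$ on the DAHA side independently --- e.g.\ via the nonsymmetric Macdonald polynomials $E_\mu$, which diagonalize the $Y_i$ individually, and then symmetrize --- which is exactly the content of the cited theorem of \cite{Ch2}. You acknowledge this dependence, so the argument is acceptable as a proof by reduction to that theorem. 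One minor simplification: you do not need the eigenvalues to be distinct; since both operators act diagonally on the basis $\{P_\lambda\}$ of $V_N$ with the same scalars, they agree on a basis and hence coincide.
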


In fact, the operator $\delta_1$ is diagonal in the basis of Macdonald polynomials of $V_N$, with eigenvalues given by: 
$$
\delta_1 \cdot P_{\lambda}(x)= \left(\sum_{i}t^{N - i}q^{\lambda_i} \right)P_{\lambda}(x). 
$$
This is part of an alternate definition of Macdonald polynomials as common eigenfunctions of a collection of commuting differential operators, the first of which is $\delta_1$. Generalizing this, the following result is the main theorem of \cite{Ch2}:

\begin{theorem}[\cite{Ch2}]
Let $f$ be a symmetric polynomial in $N$ variables. The operator: 
$$L_{f}:=f(Y_1,\ldots,Y_N)$$ 
is diagonal in the basis of Macdonald polynomials in $V_N$, with eigenvalues:
\begin{equation}
\label{eigenvalues}
L_{f} \cdot P_{\lambda}(x)=f(t^{\rho_N}q^{\lambda})P_{\lambda}(x).
\end{equation}
\end{theorem}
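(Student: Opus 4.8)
The plan is to reduce the statement to the already-stated eigenvalue formula for $\delta_1 = Y_1 + \ldots + Y_N$ together with a standard fact about simultaneously diagonalizable commuting operators. First I would observe that the operators $Y_1,\ldots,Y_N$ act on $V_N$ and commute pairwise (this is part of the DAHA relations, together with the fact established just above that symmetric expressions in the $Y_j$ preserve $V_N$); moreover the operator $\delta_1$ was just shown to be diagonal in the Macdonald basis $\{P_\lambda\}$ with eigenvalue $\sum_i t^{N-i} q^{\lambda_i}$ on $P_\lambda$. Since $f(Y_1,\ldots,Y_N)$ is a polynomial in the mutually commuting $Y_j$, it commutes with $\delta_1$, hence preserves each eigenspace of $\delta_1$; as long as the eigenvalues $\sum_i t^{N-i}q^{\lambda_i}$ are pairwise distinct for distinct $\lambda$ (which holds over $\BK = \BC(q,t)$ for generic $q,t$, since these are distinct Laurent mon'omial combinations), the eigenspaces are one-dimensional and spanned by the $P_\lambda$. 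Therefore $L_f = f(Y_1,\ldots,Y_N)$ is automatically diagonal in the Macdonald basis, and it only remains to identify its eigenvalues.

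To compute the eigenvalue of $L_f$ on $P_\lambda$, I would argue as follows. The key input is that, restricted to the common eigenbasis $\{P_\lambda\}$, each $Y_j$ acts by some scalar $c_j(\lambda)$, and the assignment $f \mapsto (\text{eigenvalue of } f(Y_1,\ldots,Y_N) \text{ on } P_\lambda)$ is a ring homomorphism from the symmetric polynomials in $N$ variables to $\BK$ sending $f$ to $f(c_1(\lambda),\ldots,c_N(\lambda))$ (symmetry in the $c_j(\lambda)$ follows from the symmetry of $f$). So it suffices to determine the multiset $\{c_1(\lambda),\ldots,c_N(\lambda)\}$. The formula for $\delta_1$ gives $\sum_j c_j(\lambda) = \sum_i t^{N-i}q^{\lambda_i} = \sum_i (t^{\rho_N}q^\lambda)_i = p_1(t^{\rho_N}q^\lambda)$. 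To pin down the individual $c_j$, I would similarly invoke (or reprove) the fact that the higher Macdonald operators $L_{e_r} = e_r(Y_1,\ldots,Y_N)$ act on $P_\lambda$ with eigenvalue $e_r(t^{\rho_N}q^\lambda)$ — this is the content of \cite[Ch. VI]{Macdonald} and is exactly the statement being generalized; knowing the eigenvalues of all the $e_r(Y)$ determines the multiset $\{c_j(\lambda)\}$ to be precisely $\{(t^{\rho_N}q^\lambda)_j\} = \{t^{N-i}q^{\lambda_i}\}$, since the elementary symmetric functions separate multisets. Then for arbitrary symmetric $f$, applying the ring homomorphism gives $L_f \cdot P_\lambda = f(c_1(\lambda),\ldots,c_N(\lambda)) P_\lambda = f(t^{\rho_N}q^\lambda) P_\lambda$, which is \eqref{eigenvalues}.

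The main obstacle is establishing the full statement for the generators $e_r(Y_1,\ldots,Y_N)$, not just for $\delta_1 = p_1(Y)$: the single operator $\delta_1$ only gives the power sum $p_1$ of the eigenvalues, which is not enough to recover the whole multiset. One clean way to handle this uniformly is to work inside the DAHA abstractly: the subalgebra generated by symmetric polynomials in $Y_1,\ldots,Y_N$ is commutative and acts on $V_N$, and one constructs a joint eigenbasis; the intertwiner/creation-operator approach of Cherednik (or the recursive construction via the operators $T_i$ and $\gamma$ given above) lets one compute the action of each $Y_i$ on a suitable non-symmetric eigenbasis, with eigenvalues manifestly of the form $t^{\bullet}q^{\bullet}$, and then symmetrizing recovers the claim. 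Alternatively, and perhaps most economically, one cites Macdonald's result that the $L_{e_r}$ are diagonalized with eigenvalues $e_r(t^{\rho_N}q^\lambda)$ and runs the short ring-homomorphism argument above; the only real content beyond what is quoted in the excerpt is the simultaneous diagonalizability, which follows from the distinctness of the $\delta_1$-eigenvalues over $\BK$. A minor point to check carefully is genericity: one should confirm that $\sum_i t^{N-i}q^{\lambda_i}$ are genuinely distinct for distinct partitions $\lambda$ with at most $N$ parts, so that the eigenspace decomposition is as fine as claimed; this is elementary by comparing exponents.
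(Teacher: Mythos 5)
The paper offers no proof of this theorem: it is quoted as the main result of \cite{Ch2}, so there is nothing in the text to compare your argument against line by line. Your overall structure is the standard and correct one. The diagonalizability step is fine: in $\sum_i t^{N-i}q^{\lambda_i}$ the coefficient of $t^{N-i}$ is $q^{\lambda_i}$, so distinct partitions with at most $N$ parts give distinct eigenvalues over $\BK=\BC(q,t)$, each $\delta_1$-eigenspace in $V_N$ is spanned by a single $P_\lambda$, and any operator on $V_N$ commuting with $\delta_1$ (in particular $L_f$, a polynomial in the commuting $Y_j$) must therefore be diagonal in the Macdonald basis. Likewise, the observation that $f\mapsto\mathrm{eig}_\lambda(L_f)$ is a ring homomorphism from symmetric polynomials to $\BK$, so that it suffices to treat the generators $e_1,\dots,e_N$, is the right reduction.

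The one assertion you should not make is that ``restricted to the common eigenbasis $\{P_\lambda\}$, each $Y_j$ acts by some scalar $c_j(\lambda)$.'' The individual $Y_j$ do not preserve $V_N$ (only symmetric polynomials in the $Y_j$ do, as the paper itself emphasizes), and $P_\lambda$ is not a joint eigenvector of the $Y_j$; the joint eigenvectors are the \emph{non-symmetric} Macdonald polynomials $E_\mu$ in $\BC[X_1,\dots,X_N]$, of which $P_\lambda$ is a linear combination over the $S_N$-orbit of $\lambda$. This is repairable in either of two ways you already gesture at: (i) drop the $c_j(\lambda)$ as eigenvalues of operators and treat them purely as the multiset of roots in $\overline{\BK}$ of $T^N-\phi(e_1)T^{N-1}+\cdots\pm\phi(e_N)$, where $\phi$ is the eigenvalue homomorphism --- then knowing $\phi(e_r)=e_r(t^{\rho_N}q^{\lambda})$ for all $r$ forces $\phi(f)=f(t^{\rho_N}q^{\lambda})$ for all symmetric $f$; or (ii) pass to the non-symmetric eigenbasis, where $f(Y)$, being central in the affine Hecke algebra generated by the $T_i$ and $Y_j$, acts by the same scalar on every $E_\mu$ in the orbit of $\lambda$. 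In either version the genuine content --- the spectrum of the $Y_j$, equivalently the eigenvalues of the operators $e_r(Y)$ --- is still imported from Macdonald or Cherednik, which is consistent with the paper treating the whole statement as a citation; your argument correctly isolates that input and supplies the (easy) bootstrapping around it.
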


\subsection{The $\slz$ action}
\label{sub:slzact}

Letting $e \in \mathbf{H}_N$ denote the complete idempotent, the spherical DAHA is defined as the subalgebra:
$$
\sh_N = e\cdot \mathbf{H}_N \cdot e
$$ 
There is an action of $\slz$ on $\mathbf{H}_N$ that preserves the subalgebra $\sh_N$. To define it, let:
$$
\tau_{+}=\left(\begin{matrix}1 & 1\\ 0& 1 \end{matrix}\right),\quad \tau_{-}=\left(\begin{matrix}1 & 0\\ 1& 1 \end{matrix}\right)
$$
be the generators of $\slz$. Then Cherednik (\cite{ChBook}, see also \cite{svmacd}) shows that:
\begin{multline}
\label{SL2 action}
\tau_{+}(X_i)=X_i,\quad \tau_{+}(T_i)=T_i,\quad \tau_{+}(Y_i)=Y_iX_i(T_{i-1}^{-1}\cdots T_i^{-1})(T_{i}^{-1}\cdots T_{i-1}^{-1}) \\
\tau_{-}(Y_i)=Y_i,\quad \tau_{-}(T_i)=T_i,\quad \tau_{-}(X_i)=X_iY_i(T_{i-1}\cdots T_i)(T_{i}\cdots T_{i-1})
\end{multline}
extend to automorphisms of $\mathbf{H}_N$, and they respect the relations in $\slz$. This action allows one to construct certain interesting elements in $\mathbf{H}_N$. Start by defining:
$$
P^{\lambda,N} := P_\lambda(Y_1,\ldots,Y_N) \in \sh_N
$$
For example, $P^{(1),N} = \delta_1$ is the sum of the $Y_i$. For any pair of integers $(n,m)$ with $\gcd(n,m)=1$, let us choose any matrix of the form:
$$
\gamma_{n,m} = \left(\begin{matrix} x & n \\ y & m \\ \end{matrix}\right) \in \slz
$$ 

\begin{definition} (and \textbf{Proposition}, see \cite{ChBook}, \cite{svmacd}) The elements:
$$
P_{n,m}^{\lambda,N} := \gamma_{n,m}(P^{\lambda,N}) \in \sh_N
$$
do not depend on the choice of $\gamma_{n,m}$.

\end{definition}

The same construction can be done starting from the power-sum symmetric functions: 
$$
p_k^N:=Y_1^k+\ldots+Y_N^k \in \sh_N.
$$ 
Acting on them with $\gamma_{n,m}$ gives rise to operators:
$$
P^N_{kn,km} := \gamma_{n,m}(p_k^N) \in \sh_N
$$
As in the above Proposition, these do not depend on the particular choice of $\gamma_{n,m}$. Since the Macdonald polynomials $P^{\lambda,N}$ are polynomials in the power-sum functions $p_k^N$, then the $P_{n,m}^{\lambda,N}$ are polynomials in $P^N_{kn,km}$. These can be easily calculated on a computer.

\subsection{Cherednik's conjectures}

Following \cite{Ch}, we define the \emph{DAHA-superpolynomials}: 
$$
\mathcal{P}_{n,m}^{\lambda,N}(q,t) =\varepsilon_N(P_{n,m}^{\lambda,N}\cdot 1) \in \BK, \qquad \qquad \mathcal{P}_{kn,km}^{N}(q,t) =\varepsilon_N(P_{kn,km}^{N}\cdot 1) \in \BK
$$
where $\e_N$ is the evaluation map of \eqref{eqn:eval}. Since the operators $P_{n,m}^{\lambda,N}$ can be expressed as sums of products of $P_{kn,km}^{N}$, we will see that the superpolynomials $\mathcal{P}_{n,m}^{\lambda,N}(q,t)$ can be expressed in terms of the matrix elements of $P_{kn,km}^{N}$. Therefore, we will mostly focus on the latter, and in Section \ref{sec:hilb} we will show how to compute them.  The following conjectures were stated in \cite{Ch}: 

\begin{conjecture}[Stabilization]
\label{conj1}

There exists a polynomial $\mathcal{P}_{n,m}^{\lambda}(u,q,t)$ such that:
$$
\mathcal{P}_{n,m}^{\lambda,N}(q,t) = \mathcal{P}_{n,m}^{\lambda}(u=t^N,q,t)
$$
\end{conjecture}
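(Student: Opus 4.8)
The plan is to realize the DAHA-superpolynomials $\CP_{kn,km}^N(q,t)$ as matrix coefficients of operators acting on a representation that is independent of $N$, so that the $N$-dependence enters only through the parameter $u = t^N$. Concretely, the idea is to use the Schiffmann--Vasserot description of the spherical DAHA $\sh_N$ as (a quotient of) the elliptic Hall algebra, together with its action on the space $V_N$ of symmetric polynomials via the operators $Y_i$. First I would observe that each $\sh_N$ admits a common presentation in terms of generators $P^N_{kn,km}$ that map compatibly under the projections $V_N \to V_{N-1}$ obtained by setting $x_N = 0$; passing to the inverse limit, the operators $P_{kn,km}$ act on $V = \BK[p_1, p_2, \ldots]$ independently of $N$. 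The $\slz$-action is defined purely in terms of the abstract Hall-algebra structure (equivalently, Cherednik's elementary transformations), so it too stabilizes.

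The key technical point is to control the evaluation map $\e_N$. Using \eqref{power sum eval}, $\e_N(p_k) = \frac{1 - u^k}{1 - t^k}$ with $u = t^N$, so $\e_N$ factors as: first apply a fixed $\BK$-linear ``plethystic'' substitution $p_k \mapsto \frac{1 - u^k}{1 - t^k}$ sending $V \to \BK[u^{\pm 1}]$-valued functions, which is manifestly polynomial in $u$ (in fact in $u$ itself once one clears denominators), and only then specialize $u = t^N$. Thus I would show that $\CP_{kn,km}^N(q,t) = \e_N(P^N_{kn,km} \cdot 1)$ equals $F(u = t^N, q, t)$ where $F$ is the single expression obtained by letting $P_{kn,km}$ act on $1 \in V$ in the stable representation and then applying the universal substitution $p_k \mapsto \frac{1-u^k}{1-t^k}$. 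Because $P_{n,m}^{\lambda, N}$ is the same universal polynomial in the $P^N_{kn,km}$ for every $N$ (Macdonald's formula expressing $P_\lambda$ in power sums does not depend on $N$), the colored case $\CP_{n,m}^{\lambda,N}$ follows immediately from the power-sum case.

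The main obstacle I anticipate is verifying that the $\slz$-action genuinely stabilizes as $N \to \infty$ in a way compatible with the evaluation, i.e. that the elements $\gamma_{n,m}(p_k^N) \in \sh_N$ are the restrictions of a single element $\gamma_{n,m}(p_k)$ acting on $V$. Cherednik's construction of the $\slz$-action is via a sequence of elementary transformations that are ``rather difficult to describe by a closed formula'' (as the introduction notes), so the clean way around this is to invoke the Schiffmann--Vasserot isomorphism with the elliptic Hall algebra, where $\slz$ acts by honest automorphisms permuting the bidegrees, together with the fact (also due to Schiffmann--Vasserot) that these algebras act on $K$-theory of Hilbert schemes / on $V$ uniformly in $N$. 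Once the stable action of $P_{kn,km}$ on $V$ and the universal substitution are in hand, the result is essentially formal: $\CP_{n,m}^\lambda(u,q,t)$ is defined as that stable matrix coefficient, it is a polynomial (or at worst a Laurent polynomial, after the elementary check that denominators cancel) in $u, q, t$, and setting $u = t^N$ recovers $\CP_{n,m}^{\lambda,N}(q,t)$. A secondary but routine point will be to confirm polynomiality rather than mere rationality in $u$, which should follow from Theorem \ref{thm:jeval} and the integrality of the $J_\lambda$ normalization.
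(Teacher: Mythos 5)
Your proposal is correct and follows essentially the same route as the paper: stabilize the operators $P^{N}_{kn,km}$ along the projective system $V_N \to V_{N-1}$ via the Schiffmann--Vasserot compatibility, note that $P^{\lambda,N}_{n,m}$ is a universal polynomial in these, and observe that $\e_N$ depends on $N$ only through $u=t^N$. The only cosmetic difference is that the paper encodes the evaluation as a pairing with an explicit evaluation vector $\Vv(u)$ expanded in the $J_\lambda$ basis, whereas you use the equivalent plethystic substitution $p_k \mapsto \frac{1-u^k}{1-t^k}$ directly.
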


\begin{conjecture}[Duality]
\label{conj2}
Define the reduced superpolynomial by the equation:
$$\CP_{n,m}^{\lambda,red}(u,q,t)=\frac{\CP_{n,m}^{\lambda}(u,q,t)}{\CP_{1,0}^{\lambda}(u,q,t)}$$
Then the polynomials $\CP$ for transposed diagrams are related by the equation:
$$
q^{(1-n)|\lambda|} \CP_{n,m}^{\lambda^t,red}(u,q,t) = t^{(n-1)|\lambda|} \CP_{n,m}^{\lambda,red}(u,t^{-1},q^{-1}).
$$
\end{conjecture}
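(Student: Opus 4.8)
The plan is to realize $\CP_{n,m}^\lambda$ as a genuine matrix coefficient on the Schiffmann--Vasserot Fock space $V=\BK[p_1,p_2,\dots]$ and then transport it through the transposition symmetry of Macdonald theory. Granting the stabilization statement (Conjecture~\ref{conj1}, established in Section~\ref{sec:stab}), one first rewrites the evaluation map by the Cauchy identity, $\e_N(f)=\langle f,\Pi(\,\cdot\,,t^{\rho_N})\rangle_{q,t}$ with $\Pi(x,y)=\prod_{i,j}(tx_iy_j;q)_\infty/(x_iy_j;q)_\infty$, and observes that at $y=t^{\rho_N}$ this telescopes to $\prod_i(ux_i;q)_\infty/(x_i;q)_\infty$, whose stable version is the group-like element $\Omega_u=\exp\!\bigl(\sum_{k\ge 1}\tfrac{1-u^k}{k(1-q^k)}p_k\bigr)\in V$ --- notably independent of $t$. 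Hence $\CP_{n,m}^\lambda(u,q,t)=\langle P_{n,m}^\lambda\cdot 1,\ \Omega_u\rangle_{q,t}$ for the stable operator $P_{n,m}^\lambda=\gamma_{n,m}(P_\lambda(Y))$, and in particular $\CP_{1,0}^\lambda=\langle P_\lambda,\Omega_u\rangle_{q,t}=h_\lambda^{-1}\prod_{\square\in\lambda}(t^{l'(\square)}-uq^{a'(\square)})$ by Theorem~\ref{thm:jeval}. Thus the claim becomes an identity between two matrix coefficients of this form.

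The next step is to apply the Macdonald involution $\omega_{q,t}\colon p_k\mapsto(-1)^{k-1}\tfrac{1-q^k}{1-t^k}p_k$, which is an isometry $(V,\langle\,,\,\rangle_{q,t})\to(V,\langle\,,\,\rangle_{t,q})$ fixing $1$, sends $P_\lambda(x;q,t)$ to $Q_{\lambda^t}(x;t,q)$, and conjugates the elliptic Hall algebra action in a controlled way: on each ``power-sum'' generator $P_{kn,km}$ it should act by the scalar $(-1)^{k-1}\tfrac{1-q^k}{1-t^k}$ together with the reflection of $GL_2(\BZ)$ exchanging the two equivariant directions. Applying $\omega_{q,t}$ to $\langle P_{n,m}^{\lambda^t}(q,t)\cdot 1,\Omega_u\rangle_{q,t}$ then produces $\tfrac{h_\lambda(t,q)}{h'_\lambda(t,q)}\langle P_{*}^{\lambda}(t,q)\cdot 1,\ \omega_{q,t}\Omega_u\rangle_{t,q}$, where $P_{*}^\lambda$ carries the reflected bidegree; composing further with the inversion relabeling $(q,t)\mapsto(q^{-1},t^{-1})$ of the elliptic Hall algebra (again a reflection of bidegrees, and using $P_\mu(x;q,t)=P_\mu(x;q^{-1},t^{-1})$), and exploiting the $\slz$-relations of the construction together with the structure of the vacuum $1$, one returns to $\langle P_{n,m}^\lambda(t^{-1},q^{-1})\cdot 1,\ \Omega'\rangle_{t^{-1},q^{-1}}$ up to scalars, where $\Omega'$ is the image of $\Omega_u$. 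A short plethystic computation identifies $\Omega'$ with $\Omega_u$ written in the $(t^{-1},q^{-1})$-normalization times an explicit monomial in $q,t,u$; any leftover elementary factors such as $\prod_i(1-x_i)$ or $\prod_i(1-ux_i)^{-1}$ are again group-like and can be commuted past $P_{n,m}^\lambda$ as in \cite{Negut}, contributing only further monomials.

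Finally one collects the scalars, which come from (a) the Macdonald transposition factor $h_\lambda(t,q)/h'_\lambda(t,q)$, (b) the monomial produced by reflecting the bidegree $(n|\lambda|,m|\lambda|)$ of the operator and by the inversion relabeling, and (c) the monomial relating the two co-vacua $\Omega_u,\Omega'$. Dividing by the same computation at $(n,m)=(1,0)$ --- i.e. passing to the reduced superpolynomials, which also cancels the $u$-dependence carried by the denominators --- and simplifying with the standard identities for $\sum_\square a(\square)$, $\sum_\square l(\square)$, $\sum_\square a'(\square)$, $\sum_\square l'(\square)$ and $|\lambda^t|=|\lambda|$, the net scalar should collapse to precisely $q^{(1-n)|\lambda|}$ on the left and $t^{(n-1)|\lambda|}$ on the right. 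The step I expect to be the main obstacle is the second paragraph: proving that $\omega_{q,t}$-conjugation (and then the inversion relabeling) acts on the Schiffmann--Vasserot generators $P_{kn,km}$ exactly by the asserted scalar multiples of the bidegree-reflected generators --- equivalently, that the Macdonald involution intertwines the $\slz$-action of~\eqref{SL2 action} with its twist by the relevant $GL_2(\BZ)$-reflection. Should this fail in the naive form, the fallback is to route through the Fourier ($S$-)automorphism and Cherednik's duality anti-involution of the DAHA, at the cost of noticeably heavier bookkeeping of the prefactors.
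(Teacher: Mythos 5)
Your overall strategy---rewrite $\CP_{n,m}^{\lambda}$ as a pairing of $P_{n,m}^{\lambda}\cdot 1$ against a group-like co-vacuum and then transport it through a $q\leftrightarrow t$ involution of Macdonald theory---is reasonable, and your identification of the stable evaluation vector with $\Omega_u=\exp\bigl(\sum_{k\ge 1}\tfrac{1-u^k}{k(1-q^k)}p_k\bigr)$ is correct. But the proposal has a genuine gap exactly where you flag it: the entire mathematical content of the duality is the claim that conjugation by $\omega_{q,t}$ (or by whatever combination of involutions you settle on) carries the $\slz$-constructed generators $P_{kn,km}(q,t)$ to controlled multiples of the corresponding generators with parameters swapped. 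You verify nothing beyond the two ``coordinate axes'' of the lattice ($P_{k,0}$ is multiplication by $p_k$, $P_{0,k}$ is a Macdonald operator, and both behave well under $\omega_{q,t}$ by classical Macdonald duality); for general $(n,m)$ the element $P_{kn,km}$ is \emph{defined} through the $\tau_\pm$ automorphisms of \eqref{SL2 action}, and compatibility of $\omega_{q,t}$ with that $\slz$-action is precisely what must be proved, not assumed. There is also a concrete error in your description of the expected answer: you assert that the involution reflects the bidegree (``$P_*^\lambda$ carries the reflected bidegree''). It must not. The symmetry relevant to Conjecture \ref{conj2} fixes $(n,m)$ and only exchanges the two equivariant parameters while transposing all partitions; if the bidegree were reflected to $(m,n)$ you would be proving the (different) $m\leftrightarrow n$ symmetry rather than the $\lambda\leftrightarrow\lambda^t$ duality. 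Finally, all of the scalar bookkeeping in your last paragraph is deferred (``should collapse to precisely\dots''), so even granting the key lemma the argument is not closed.

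For comparison, the paper's proof supplies the missing lemma geometrically rather than algebraically: after passing to the modified polynomials $\TCP$ the claim becomes $\TCP_{n,m}^{\lambda^t,red}(u,q,t)=\TCP_{n,m}^{\lambda,red}(u,t,q)$, and this follows at once from the transposition symmetry $\Hl(q,t)=\Hlt(t,q)$ of modified Macdonald polynomials together with the explicit fixed-point formula \eqref{eqn:fix3} for the matrix coefficients of $\TP_{kn,km}$ on $K$-theory of the Hilbert scheme, which is manifestly invariant under swapping $q\leftrightarrow t$ while transposing all diagrams and tableaux (each weight $\chi(\square)=q^{i-1}t^{j-1}$ goes to the weight of the transposed box), as is the class $\Lambda(u)$. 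If you want to salvage your purely symmetric-function route, you would need to prove the intertwining of $\omega_{q,t}$ with the $\slz$-action on $\sh$ directly---e.g.\ by checking it on the generators $\tau_\pm$ and on $P_{\pm 1,0},P_{0,\pm 1}$ and invoking the Burban--Schiffmann presentation---which is a genuine piece of work equivalent in substance to what \eqref{eqn:fix3} provides for free.
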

One of our main results is a proof of the above conjectures, to be given in Subsections \ref{sub:proofs} and \ref{sub:refined}. 

\subsection{Refined Chern-Simons theory}

In this section we describe the approach of Aganagic and Shakirov, which has its roots in Chern-Simons theory. A 3-dimensional topological quantum field theory associates a number $Z(M)$ to every 3-manifold, a Hilbert space $Z(N)$ to a closed 2-manifold $N$, and a vector $Z(M)\in Z(\partial{M})$ to a 3-manifold $M$ with boundary. All our 3-manifolds may come with closed knots embedded in them.

We will be interested in invariants of torus knots inside the sphere $S^3$. The sphere can be split into two solid tori glued along the boundary, and the Hilbert space $Z(T^2)$ associated to their common boundary can be identified with a suitable quotient of $V$. This quotient has a basis consisting of Macdonald polynomials labeled by Young diagrams inscribed in a $k\times N$ rectangle. One can think of Macdonald polynomials as invariants of the meridian of the solid torus colored by these diagrams. The space $Z(T^2)$ is acted on by the mapping class group of the torus, namely $\slz$, as constructed in \cite{AS}.

Consider the $(m,n)$ torus knot colored by a partition $\lambda$ inside a solid torus, linked with a meridian colored by $\mu$. For such a link, a TQFT should produce a vector $v_{n,m,\mu}^{\lambda}$ in $Z(T^2)$. One defines a {\em knot operator} $W_{n,m}^{\lambda}$ by the formula:
$$
W_{n,m}^{\lambda}|P_{\mu}\rangle=v_{n,m,\mu}^{\lambda}.
$$
In particular, $W_{n,m}^{\lambda}| 1 \rangle$ is the vector in $Z(T^2)$ associated to a solid torus with a $\lambda$-colored $(m,n)$ torus knot inside. The construction of \cite{AS} uses the equation:
\begin{equation}
\label{conj}
W_{n,m}^{\lambda}=K^{-1}W_{1,0}^{\lambda}K,
\end{equation}
where $W_{1,0}^{\lambda}$ is the operator of multiplication by $P_{\lambda}$ and $K$ is any element of $\slz$ taking $(1,0)$ to $(n,m)$, seen as an endomorphism of $Z(T^2)$. The action of $\slz$ was introduced by A. Kirillov Jr. in \cite{K} and is given by the two matrices  (\cite{AS,EK,K}) written in the Macdonald polynomial basis \footnote{The operators in \cite{AS} differ from these by overall scalar factors, which are not important for us}:
\begin{equation}
\label{S and T}
S_{\lambda}^{\mu}=P_{\lambda}(t^{\rho}q^{\mu})P_{\mu}(t^{\rho}),\qquad T_{\lambda}^{\mu}=\delta_{\lambda}^{\mu}q^{\frac{1}{2}\sum_{i}\lambda_i(\lambda_i-1)}t^{\sum_{i}\lambda_i(i-1)}.
\end{equation}
which correspond to the matrices:
$$
\sigma = \left( \begin{array}{cc}
0 & 1 \\
-1 & 0 \end{array} \right), \qquad \tau = \left( \begin{array}{cc}
1 & 1 \\
0 & 1 \end{array} \right) \in \slz.
$$
 The refined Chern-Simons knot invariant is then defined as
\begin{equation}
\label{CS evaluation}
\CP^{CS}_{n,m,\lambda}(q,t):=\langle 1|SW_{n,m}^{\lambda}|1 \rangle_{q,t}.
\end{equation}
The extra $S$-matrix in \eqref{CS evaluation} is responsible for the gluing of two solid tori into $S^3$.
The equations (\ref{conj})-(\ref{CS evaluation}) give a rigorous definition of the polynomials $\CP^{CS}_{n,m\lambda}(q,t)$, which a priori depend on the choice of $k$ and $N$. It is conjectured in \cite{AS} that for large enough $k$ and $N$ the answer does not depend on $k$ and its $N$-dependence can be captured in a variable $u=t^N$. 

The approach of \cite{Ch} is to identify $Z(T^2)$ with the finite-dimensional  representation of $\sh_N$. Such representations were classified in \cite{VV}, and in type $A_N$ they occur when $q$ is a root of unity of degree $k+\beta N$, and $t=q^{\beta}$. It is known (\cite{Ch3}) that the bilinear form $\langle\cdot, \cdot \rangle_{q,t}$ is nondegenerate on $Z(T^2)$, which is a quotient of the polynomial representation by the kernel of this form. 
The following lemma shows the equivalence of the approaches of \cite{AS} and \cite{Ch}.

\begin{lemma}
\label{lem:conj}
In the representation $Z(T^2)$, any $W\in \sh_N$ satisfies:
\begin{equation}
\label{eqn:lemma}
SWS^{-1}=\sigma(W).
\end{equation}
where $\sigma$ acts on the DAHA as in Subsection \ref{sub:slzact}.
\end{lemma}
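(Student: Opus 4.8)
The plan is to reduce the statement to a question about how the two generators $\sigma,\tau\in\slz$ act on the finite-dimensional module $Z(T^2)$, matching the operators $S,T$ of \eqref{S and T} against the DAHA automorphisms of Subsection \ref{sub:slzact}. First I would recall that $Z(T^2)$ is obtained from the polynomial representation of $\h_N$ by quotienting by the radical of $\langle\cdot,\cdot\rangle_{q,t}$, and that this quotient carries an action of $\slz$ by the intertwining property: Cherednik's automorphisms $\tau_\pm$ of $\h_N$ are \emph{inner} up to the passage to a finite-dimensional module, i.e.\ there exist operators realizing $\tau_+$ and $\sigma$ on $Z(T^2)$ by conjugation. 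Concretely, it is known (this is the key input from \cite{ChBook}, \cite{svmacd}) that on the finite-dimensional spherical representation the automorphism $\tau_+$ is implemented by conjugation by the diagonal operator whose eigenvalue on $P_\lambda$ is $q^{\frac12\sum_i\lambda_i(\lambda_i-1)}t^{\sum_i\lambda_i(i-1)}$ — that is, by the matrix $T$ — and likewise $\tau_-$ (equivalently the Fourier transform $\sigma=\tau_+\tau_-^{-1}\tau_+$, up to sign conventions) is implemented by conjugation by $S$.

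The cleanest route is: (1) show $T\,W\,T^{-1}=\tau(W)$ for all $W\in\sh_N$, where $\tau=\tau_+$; this is essentially a computation on generators — $\tau_+$ fixes $X_i$ and $T_i$ and sends $Y_i$ to $Y_iX_i(\cdots)$, and one checks the diagonal operator $T$ (acting in the Macdonald basis via \eqref{S and T}, which by Theorem \ref{thm:jeval}-type evaluation is a well-defined endomorphism of $Z(T^2)$) conjugates the $\delta_1=\sum Y_i$ and more generally the $p_k^N$ correctly, using that the $P^N_{kn,km}$ and in particular the $Y_i$-symmetric functions are exactly the building blocks. (2) Use the $\slz$ relations: since $S$ and $T$ satisfy the braid/$\slz$ relations on $Z(T^2)$ (this is the content of \cite{AS,K,EK}: $(ST)^3=S^2$, $S^4=\mathrm{id}$ up to scalar, matching $\sigma,\tau$), and since $\tau_+,\tau_-$ satisfy the same relations as $\tau,\tau^{-1}\sigma$ in $\slz$ on $\sh_N$, the map $W\mapsto \sigma(W)$ and the map $W\mapsto SWS^{-1}$ agree on a set of algebra generators of $\slz$ acting compatibly, hence agree. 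More precisely, from $\sigma=\tau_+\tau_-^{-1}\tau_+$ one gets $\sigma(W)$ as an iterated conjugation, and matching each $\tau_\pm$-conjugation with conjugation by the corresponding explicit operator ($T$, and the operator implementing $\tau_-$) yields $SWS^{-1}=\sigma(W)$ after the $\slz$ word for $\sigma$ is assembled.

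The main obstacle, and where care is needed, is step (1) for $\tau_-$ (or equivalently verifying the claim for $S$ directly rather than $T$): the automorphism $\tau_-$ acts nontrivially on the $X_i$, so one cannot simply compute on the "easy" generators $Y_i$, and one must instead invoke the nondegeneracy of $\langle\cdot,\cdot\rangle_{q,t}$ on $Z(T^2)$ (from \cite{Ch3}) to identify the conjugating operator. The standard trick is to note that $\sigma$ (Fourier transform) is characterized on the polynomial representation, modulo the radical, by swapping the roles of the $X$- and $Y$-operators, and that $S$ as given in \eqref{S and T} does exactly this because of the symmetry $P_\lambda(t^{\rho}q^{\mu})P_\mu(t^\rho) = P_\mu(t^\rho q^\lambda)P_\lambda(t^\rho)$ (Macdonald's evaluation/self-duality) — so $S$ intertwines the eigenbasis of the $Y$-type operators with that of the $X$-type operators, which is precisely what $\sigma$ does. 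Assembling this self-duality statement carefully, and checking the scalar normalizations match (the footnote in the excerpt warns the $S,T$ of \cite{AS} differ by overall scalars, which are harmless since \eqref{eqn:lemma} is conjugation-invariant), completes the proof. I would expect the write-up to lean on Macdonald's evaluation symmetry and the classification of finite-dimensional $\sh_N$-modules in \cite{VV} as the two load-bearing external facts.
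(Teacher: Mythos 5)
Your plan inverts the logic of the paper's argument and, as written, leaves the decisive step unproved. The paper does not go through $T$ and $\tau_+$ at all: Lemma \ref{lem:conj} is proved directly for $S$, and the statement $TWT^{-1}=\tau(W)$ is only established afterwards (in the Corollary) by the same method. The direct computation is short: by \eqref{eigenvalues} and \eqref{S and T}, $S_\lambda^\mu = P_\lambda(t^\rho q^\mu)\,\e_N(P_\mu)=\e_N\bigl(P_\lambda(Y)\cdot P_\mu\bigr)$, hence
$S(P_\lambda)=\sum_\mu S_\lambda^\mu P_\mu = P_\lambda(Y)\cdot v$ with $v=S(1)$; by linearity $S(f)=\sigma(f)\cdot v$ for every $f\in V_N$ (with $f$ on the right viewed as a multiplication operator), and then $S(Wf)=\sigma(W)S(f)$ for $W\in\sh_N$. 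Nothing about the $\slz$ relations, the matrix $T$, or the classification of finite-dimensional modules enters.

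The genuine gap in your route is step (2). To realize $\sigma=\tau_+\tau_-^{-1}\tau_+$ as an iterated conjugation you need an explicit operator on $Z(T^2)$ implementing $\tau_-$, and you never produce one; moreover, the bare fact that $S,T$ satisfy the $\slz$ relations and that $T$ implements $\tau_+$ does not pin down \emph{which} automorphism conjugation by $S$ induces (any solution of those relations differing by an automorphism commuting with the image of $\sh_N$ would do equally well), so the "assembly" cannot close without an independent anchor. Your fallback — that $S$ "swaps the $X$- and $Y$-eigenbases" by Macdonald self-duality — is pointing at the right ingredient, but as stated it only uses the symmetry $S_\lambda^\mu=S_\mu^\lambda$ of the matrix, which determines the intertwiner at best up to rescaling of basis vectors and says nothing about how $S$ conjugates non-diagonal elements of $\sh_N$. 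What is actually needed is the sharper identity $S_\lambda^\mu=\e_N(P_\lambda(Y)\cdot P_\mu)$, i.e. $S(P_\lambda)=P_\lambda(Y)\cdot S(1)$: this single line converts the multiplication operator $P_\lambda(X)$ into the difference operator $P_\lambda(Y)$ across $S$, and the general case follows formally because every vector of $V_N$ is $f(X)\cdot 1$. Without writing that computation down, the proof is not complete.
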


\begin{proof} By \eqref{eigenvalues}, we have $S_{\lambda}^{\mu}=\e_N(P_{\lambda}(Y)\cdot P_{\mu}(x))$. Consider the vector:
$$
v=S(1)=\sum_{\mu}\e_N(P_{\mu})P_{\mu}
$$
Then:
$$
S(P_{\lambda})=\sum_{\mu}S_{\lambda}^{\mu}P_{\mu}=P_{\lambda}(Y)\cdot v
$$
hence we conclude that for any function $f \in V_N$, one has $S(f)=\sigma(f)\cdot v$. Therefore:
$$
S(Wf)=\sigma(Wf)\cdot v=\sigma(W)\sigma(f)\cdot v=\sigma(W)S(f)
$$
holds for any $W\in \mathbf{H}_N$ and any $f\in V_N$.
\end{proof}

\begin{corollary}
The definitions of the refined knot invariants in \cite{AS} and \cite{Ch}
are equivalent to each other:
$$\CP^{CS}_{n,m,\lambda}(q,t)=\CP^{\lambda,N}_{n,m}(q,t).$$
\end{corollary}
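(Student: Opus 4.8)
The plan is to match the two constructions piece by piece: I will show that the extra $S$-matrix appearing in the Chern--Simons evaluation \eqref{CS evaluation} implements precisely Cherednik's evaluation homomorphism $\e_N$, and that the knot operator $W_{n,m}^\lambda$ coincides, as an operator on $Z(T^2)$, with the DAHA element $P_{n,m}^{\lambda,N}$. Granting these, $\CP^{CS}_{n,m,\lambda}(q,t)=\langle 1|SW_{n,m}^\lambda|1\rangle_{q,t}=\e_N(W_{n,m}^\lambda\cdot 1)=\e_N(P_{n,m}^{\lambda,N}\cdot 1)=\CP_{n,m}^{\lambda,N}(q,t)$, which is the assertion.

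First I would prove that $\langle 1\,|\,S\,W\,|\,1\rangle_{q,t}=\e_N(W\cdot 1)$ for every $W\in\sh_N$ acting on $Z(T^2)$. Since the pairing is bilinear, it suffices to show $\langle 1,S(P_\mu)\rangle_{q,t}=\e_N(P_\mu)$ for each Macdonald polynomial $P_\mu$ occurring in $Z(T^2)$. Writing $S(P_\mu)=\sum_\nu S_\mu^\nu P_\nu$ and using that the $P_\nu$ are $\langle\cdot,\cdot\rangle_{q,t}$-orthogonal with $\langle 1,1\rangle_{q,t}=1$, the left side is the coefficient of the empty partition, $S_\mu^{\emptyset}$; by \eqref{S and T} this equals $P_\mu(t^\rho)$, which is $\e_N(P_\mu)$ by the definition \eqref{eqn:eval} of the evaluation map. (Equivalently, $S_\mu^{\emptyset}$ is the coefficient of $P_\mu$ in the vector $v=S(1)=\sum_\mu\e_N(P_\mu)P_\mu$ computed in the proof of Lemma \ref{lem:conj}.) Taking $W=W_{n,m}^\lambda$, which lies in $\sh_N$, then reduces the left-hand side of the corollary to $\e_N(W_{n,m}^\lambda\cdot 1)$.

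Next I would identify $W_{n,m}^\lambda$ with $P_{n,m}^{\lambda,N}$ on $Z(T^2)$. Recall $W_{1,0}^\lambda$ is multiplication by $P_\lambda$, i.e.\ the element $P_\lambda(X_1,\dots,X_N)\in\sh_N$, while $P^{\lambda,N}=P_\lambda(Y_1,\dots,Y_N)$. The identity $S(Wf)=\sigma(W)S(f)$ proved inside the proof of Lemma \ref{lem:conj}, applied with $W=W_{1,0}^\lambda$, shows that $W_{1,0}^\lambda$ is obtained from $P^{\lambda,N}$ by the $\slz$-automorphism of $\mathbf{H}_N$ that carries the direction $(0,1)$ of the $Y_i$ to the direction $(1,0)$ of the $X_i$. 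Conjugating by $K$ as in \eqref{conj}, and invoking Lemma \ref{lem:conj} together with its analogue for $T$ — namely that the Aganagic--Shakirov $\slz$-action on $Z(T^2)$ generated by the matrices \eqref{S and T} coincides with Cherednik's $\slz$-action \eqref{SL2 action} on $\mathbf{H}_N$ — one gets that $W_{n,m}^\lambda=\gamma(P^{\lambda,N})$ for some $\gamma\in\slz$ sending $(0,1)$ to $(n,m)$, exactly as in the defining recipe $P_{n,m}^{\lambda,N}=\gamma_{n,m}(P^{\lambda,N})$. Since $P^{\lambda,N}=P_\lambda(Y_1,\dots,Y_N)$ is fixed by $\tau_-$ (because $\tau_-$ fixes each $Y_i$, by \eqref{SL2 action}), the element $\gamma(P^{\lambda,N})$ depends only on the image $\gamma\cdot(0,1)$ — this is exactly the independence (Proposition) statement of Subsection \ref{sub:slzact} — so $W_{n,m}^\lambda=P_{n,m}^{\lambda,N}$ as operators on $Z(T^2)$ (the latter is a priori defined on $V_N$, but it descends compatibly with the projection $V_N\to Z(T^2)$ and with $\e_N$). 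Combined with the first step, this proves the corollary.

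The main obstacle is the bookkeeping in this second step: reconciling the two a priori different incarnations of the $\slz$-action — that of Aganagic--Shakirov and Kirillov, given on $Z(T^2)$ by the explicit matrices $S$ and $T$ of \eqref{S and T} written in the Macdonald basis, and Cherednik's action on $\mathbf{H}_N$ generated by $\tau_\pm$ as in \eqref{SL2 action} — while keeping track of the projective sign ambiguities (the automorphism attached to $-\mathrm{id}\in\slz$ acts nontrivially on the DAHA) and of which of $S$, $T$ plays the role of $\tau_+$ versus $\tau_-$, so that the label ``direction $(n,m)$'' genuinely agrees on the two sides. Once Lemma \ref{lem:conj} and its $T$-analogue pin this dictionary down, the remainder is the routine observation — already the content of the well-definedness of $P_{n,m}^{\lambda,N}$ — that both operators are simply ``the $\slz$-image, in the direction $(n,m)$, of the Macdonald difference operator $P_\lambda(Y_1,\dots,Y_N)$''.
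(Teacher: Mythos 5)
Your proposal is correct and follows essentially the same route as the paper's proof: identify the covector $\langle 1|S|\cdot\rangle_{q,t}$ with the evaluation $\e_N$ (via $S_\mu^{\emptyset}=P_\mu(t^\rho)$), match the two $\slz$-actions using Lemma \ref{lem:conj} and its $T$-analogue, and then transport the equality $W_{1,0}^\lambda=P_{1,0}^{\lambda,N}$ (both being multiplication by $P_\lambda$) along the group action. You simply spell out more of the bookkeeping that the paper compresses into one sentence.
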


\begin{proof}
Similarly to Lemma \ref{lem:conj}, one can show that the equation $TWT^{-1}=\tau(W)$ holds for any $W\in \sh_N$. Together with (\ref{eqn:lemma}), this implies the equation $KWK^{-1}=\kappa(W)$, where $K$ is an arbitrary operator from $\slz$ and $\kappa$ is the corresponding automorphism of the spherical DAHA. In other words, the two actions of $\slz$ on the image of $\sh_N$ in the automorphisms of $Z(T^2)$ agree with each other. Since both $P_{1,0}^{\lambda,N}$ and $W_{1,0}^{\lambda,N}$ are defined as multiplication operators by the Macdonald polynomial $P_{\lambda}$, one has $P_{1,0}^{\lambda,N}=W_{1,0}^{\lambda,N}$ and
$$P_{n,m}^{\lambda,N}=W_{n,m}^{\lambda,N}$$
for all $m$, $n$ and $\lambda$. It remains to notice that the covector 
$\langle 1|S|\cdot\rangle_{q,t}$ coincides with the evaluation map $\varepsilon_N$, hence
$$\CP^{CS}_{n,m\lambda}(q,t)=\langle 1|SW_{n,m}^{\lambda}|1 \rangle_{q,t}=
\varepsilon_{N}(W_{n,m}^{\lambda}\cdot 1)=\varepsilon_{N}(P_{n,m}^{\lambda}\cdot 1)=\CP^{\lambda,N}_{m,n}(q,t).
$$
\end{proof}

\section{Stabilization and $N$-dependence}
\label{sec:stab}

Conjecture \ref{conj1} involves the $N$-dependence of the expression $\CP_{n,m}^{\lambda,N} = \varepsilon_N (P^{\lambda,N}_{n,m} \cdot 1)$, and we will understand this in three steps. First, we will recast the evaluation $\e_N$ as a certain matrix coefficient of the operator $P^{\lambda,N}_{n,m}$. Secondly, we will describe the behaviour of these operators  as $N \rightarrow \infty$ and show that they stabilize to an operator $P^\lambda_{n,m}$. Thirdly, we will discuss the behaviour of the matrix coefficients as $N \rightarrow \infty$.

\subsection{Matrix Coefficients}

Let us start from the polynomial representation $\sh_N \longrightarrow \text{End}(V_N)$. We will consider the \emph{evaluation vector}:
\begin{equation}
\label{eqn:evalv}
\Vv(u) := \sum_{\lambda} \frac {J_\lambda }{h_\la h'_\la}  \prod_{\square\in \lambda}\left(t^{l'(\square)}-u q^{a'(\square)}\right)\in V_N
\end{equation}
In the above, the first sum goes over all partitions, and so $\Vv(u)$ rightfully takes values in a completion of $V_N$. Then for any $f\in V_N$, we have:
$$
\e_N(f) = \langle f, \Vv(u) \rangle_{q,t}  \Big |_{u=t^N}
$$
Indeed, since the above relation is linear, it is enough to check it for $f=J_\lambda$, where it follows from \eqref{eqn:normj} and \eqref{J eval}. Then our knot invariants are given by the formula:
\begin{equation}
\label{eqn:pose}
\CP^{\lambda,N}_{n,m} = \e_N(P^{\lambda,N}_{n,m}\cdot 1) = \langle \Vv(u)|P^{\lambda,N}_{n,m}|1\rangle_{q,t} \Big |_{u=t^N}
\end{equation}
where $\langle \cdot | * | \cdot \rangle_{q,t}$ denotes matrix coefficients with respect to the Macdonald scalar product $\langle \cdot , \cdot \rangle_{q,t}$.

\subsection{Stabilization of operators}

We will now let $N$ vary. Recall that the spaces of symmetric polynomials $V_N$ form a projective system under the maps $\eta_{N}:V_N \longrightarrow V_{N-1}$ that set $x_N=0$, and $V$ is the inverse limit if this system. Therefore, we have projection maps $\eta_{\infty,N}: V \longrightarrow V_N$. We will rescale our operators: 
\begin{equation}
\label{eqn:rescale}
\overline{P}^{N}_{0,k}=t^{-k(N-1)}\left(P^{N}_{0,k} - \frac {t^{kN} - 1}{t^k-1} \right), \qquad \qquad \overline{P}^{N}_{kn,km} = P^N_{kn,km} \quad \text{for }n \neq 0
\end{equation}

 
\begin{proposition}(cf. \cite[Prop. 1.4]{svhilb}) The following relation holds: 

$$
\overline{P}^{N-1}_{kn,km}\circ \eta_{N}=\eta_{N}\circ \overline{P}^{N}_{kn,km}
$$
\end{proposition}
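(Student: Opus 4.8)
The plan is to reduce the statement to a compatibility check on the level of the DAHA operators $P^N_{kn,km}$ and the polynomial representations, exploiting the fact that $P^N_{kn,km} = \gamma_{n,m}(p_k^N)$ for a fixed choice of $\gamma_{n,m}\in\slz$, and that the defining formulas \eqref{SL2 action} for $\tau_\pm$ are ``universal'' in $N$ — they involve only the generators $T_i, X_i, Y_i$ with indices bounded independently of $N$ once we fix a block of variables. First I would recall the Schiffmann--Vasserot description: the rescaled operators $\overline{P}^N_{kn,km}$ are the images, under the polynomial representation of $\sh_N$, of the standard generators $u_{kn,km}$ of the elliptic Hall algebra, and the cited Proposition 1.4 of \cite{svhilb} is precisely the assertion that these generators act compatibly under the projections $\eta_N$. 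So the cleanest route is to invoke that identification and transport their statement; but to keep the argument self-contained I would instead give the direct check below.

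The key steps, in order. (1) Fix the matrix $\gamma_{n,m}$ once and for all, and write it as a word in $\tau_\pm$; then $\overline P^N_{kn,km}$ is obtained from $\overline p_k^N := t^{-k(N-1)}(p_k^N - \tfrac{t^{kN}-1}{t^k-1})$ (for the $n=0$ case; just $p_k^N$ otherwise) by applying the corresponding composition of automorphisms $\tau_\pm$ of $\h_N$, then sandwiching with $e$ and passing to $\text{End}(V_N)$. (2) Establish the base case $n=0$: show $\overline p_k^{N-1}\circ\eta_N = \eta_N\circ\overline p_k^N$. Here $\overline p_k^N$ acts on $V_N$; using the eigenvalue formula \eqref{eigenvalues}, $p_k^N$ acts on $P_\lambda$ by $\sum_{i=1}^N t^{k(N-i)}q^{k\lambda_i}$, and the rescaling/shift in \eqref{eqn:rescale} is engineered exactly so that the eigenvalue becomes $\sum_{i\ge 1}(q^{k\lambda_i}-1)t^{-ki}$ truncated to the first $N$ parts — a stable expression. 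Since $\eta_N$ sends $P_\lambda^{(N)}\mapsto P_\lambda^{(N-1)}$ when $\ell(\lambda)<N$ and to $0$ when $\ell(\lambda)=N$, and since the $N$-truncated eigenvalue and the $(N-1)$-truncated eigenvalue agree on partitions with $\ell(\lambda)\le N-1$, the intertwining follows by comparing both sides on the Macdonald basis. (3) Propagate through $\tau_\pm$: show that the generators $X_i, Y_i, T_i$ of $\h_{N}$ and $\h_{N-1}$ intertwine $\eta_N$ appropriately on the relevant (symmetric, or suitably completed) subspaces — i.e. that $\eta_N$ is compatible with the ``stable'' limit of the $\tau_\pm$-formulas — and hence that applying the fixed word in $\tau_\pm$ to the intertwining identity of Step (2) yields the intertwining identity for $\overline P^N_{kn,km}$. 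This is where one uses that \eqref{SL2 action} expresses $\tau_\pm(Y_i)$, $\tau_\pm(X_i)$ in terms of $X_i, Y_i$ and $T_j$ with $j$ in a bounded range near $i$, so that the formula does not ``see'' the difference between $N$ and $N-1$ away from the last variable. (4) Conclude by restricting to $V_N\to V_{N-1}$ and noting that $\eta_N(P^N_{kn,km}\cdot 1)$ computations only involve finitely many terms, so no convergence issue arises.

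The main obstacle is Step (3): the automorphisms $\tau_\pm$ are defined on the full DAHA $\h_N$, not on the polynomial representation restricted to symmetric functions, and the operators $X_i, Y_i$ individually do not preserve $V_N$ nor commute with $\eta_N$ in any naive way — only symmetric combinations like $p_k(Y)$ behave well. So I cannot simply apply $\tau_\pm$ ``variable by variable'' under $\eta_N$. The correct fix is to work not with individual $\tau_\pm(Y_i)$ but with the whole element $P^N_{kn,km}\in\sh_N$, and to check the intertwining on this element directly by an inductive argument on the length of the word defining $\gamma_{n,m}$: one shows that if $W\in\sh_N$ and $W'\in\sh_{N-1}$ are a ``stable pair'' (meaning $W'\circ\eta_N=\eta_N\circ W$ on the relevant space), then so are $\tau_\pm(W)$ and $\tau_\pm(W')$. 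Proving this stability-preservation of $\tau_\pm$ is the technical heart; it should follow from the explicit action of $\tau_\pm$ on $\sh_N$ (equivalently, from the fact, used repeatedly by Schiffmann--Vasserot, that the geometric/combinatorial action of $\slz$ on the $K$-theory of $\Hilb$ or on symmetric functions is defined uniformly in the number of variables), but it requires care to phrase the induction so that the ``bounded support'' of the $\tau_\pm$-formula near each variable is genuinely used. If instead one is willing to cite \cite[Prop. 1.4]{svhilb} together with the Schiffmann--Vasserot isomorphism $\sh\cong$ elliptic Hall algebra, Steps (2)--(3) collapse to a single invocation, and the proof is immediate.
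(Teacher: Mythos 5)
The paper offers no argument for this proposition beyond the citation to \cite[Prop.\ 1.4]{svhilb}, so your fallback option (``invoke Schiffmann--Vasserot and stop'') is exactly what the paper does, and to that extent your proposal is fine. Your Step (2) is also correct and is the right normalization check: the shift and the factor $t^{-k(N-1)}$ in \eqref{eqn:rescale} turn the eigenvalue of $P^N_{0,k}$ on $P_\lambda$ into $\sum_i t^{-k(i-1)}(q^{k\lambda_i}-1)$, which is visibly stable under $\eta_N$ on the Macdonald basis.

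The direct argument you sketch in Step (3), however, has a genuine gap, and it is not merely a matter of ``care in phrasing the induction.'' The automorphisms $\tau_\pm$ are abstract algebra automorphisms of $\h_N$; they are not implemented as conjugation by operators on the polynomial representation $V_N$ that commute with $\eta_N$. (The intertwining $SWS^{-1}=\sigma(W)$ of Lemma \ref{lem:conj} only holds on the finite-dimensional quotient $Z(T^2)$ at roots of unity, not on $V_N$ itself.) Consequently there is no mechanism by which the operator identity $W'\circ\eta_N=\eta_N\circ W$ can be ``pushed through'' $\tau_\pm$: knowing how $W$ acts tells you nothing functorial about how $\tau_\pm(W)$ acts. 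A second, more concrete obstruction is that your inductive statement is not uniform: the normalization $\overline{P}{}^N_{0,k}$ involves an $N$-dependent rescaling and shift, while $\overline{P}{}^N_{kn,km}=P^N_{kn,km}$ for $n\neq 0$, so applying $\tau_+$ (which moves bidegree $(0,k)$ to $(k,k)$) to the $n=0$ intertwining identity does not even produce a statement of the same shape. The induction that actually works — and is the one behind \cite[Prop.\ 1.4]{svhilb} — runs on a different axis: one checks stability by explicit formulas for a small set of generators of $\sh_N$ (multiplication by $p_1$, the Macdonald operator $\delta_1$ and its inverse-direction analogue), observes that the intertwining property is trivially closed under composition and $\BK$-linear combination, and then uses the $N$-independence of the elliptic Hall algebra relations \eqref{commutation} to write every $P^N_{kn,km}$ as a universal polynomial in those generators. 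If you want a self-contained proof rather than the citation, that is the route to take.
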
 

Therefore, there exist limiting operators $P_{kn,km}:=\lim_{N\to\infty} \overline{P}^{N}_{kn,km}$ on $V$, such that:
$$
\overline{P}^{N}_{kn,km}\circ \eta_{\infty,N}=\eta_{\infty,N}\circ P_{kn,km}
$$
For a general partition, the operators $P_{n,m}^{\lambda,N}$ on $V_N$ are sums of products of $P_{kn,km}^N$, and therefore there exist operators $P_{n,m}^\lambda$ on $V$ which stabilize the operators $P_{n,m}^{\lambda,N}$:
$$
\overline{P}^{\lambda,N}_{n,m} \circ \eta_{\infty,N} = \eta_{\infty,N} \circ P_{n,m}^\lambda
$$
All these new operators $P^\lambda_{n,m}$ and $P_{kn,km}$ lie in the algebra $\sh$, defined in \cite{svhilb} as the stabilization of the spherical DAHA's $\sh_N$ as $N \rightarrow \infty$. 

\subsection{Commutation relations}

The isomorphism between $\sh$ and the elliptic Hall algebra, established in \cite{svmacd} (see also \cite{Negut2}), allows one to present some explicit commutation relations between the operators $P_{n,m}$. These commutation relations were discovered by Burban and Schiffmann in \cite{BS}. It is sometimes convenient to represent the operator $P_{n,m}$ by the vector $(n,m)$ in the integer lattice $\mathbb{Z}^2$ (we assume $n>0$). The action of $\slz$ on the algebra $\sh$ is then just given by the linear action on this lattice. 

\begin{definition}(\cite{Negut2})
A triangle with vertices $X=(0,0)$, $Y=(n_2,m_2)$ and $Z=(n_1+n_2,m_1+m_2)$ is called quasi-empty if $m_1n_2-m_2n_1>0$ and there are no lattice points neither inside the triagle, nor on at least one of the edges $XY, YZ$.
\end{definition}

Let us define the constants:
\begin{equation}
\label{eq:alpha}
\alpha_n=\frac{(q^n-1)(t^{-n}-1)(q^{-n}t^{n}-1)}{n},
\end{equation}
and the operators $\theta_{kn,km}$ (for coprime $m,n$) by the equation: 
$$
\sum_{n=0}^{\infty}z^{n}\theta_{kn,km}=\exp\left(\sum_{n=1}^{\infty}\alpha_nz^{n}P_{kn,km}\right).
$$
The elliptic Hall algebra is defined by the following commutation relations (\cite{BS}):
$$
[P_{n_1,m_1},P_{n_2,m_2}]=0,
$$
if the vectors $(n_1,m_1)$ and $(n_2,m_2)$ are collinear, and:
\begin{equation}
\label{commutation}
[P_{n_1,m_1},P_{n_2,m_2}]=\frac{\theta_{n_1+n_2,m_1+m_2}}{\alpha_1},
\end{equation}
if the points $(0,0), (n_2,m_2)$ and $(n_1+n_2,m_1+m_2)$ form a quasi-empty triangle.

\begin{figure}[ht]
\begin{tikzpicture}
\draw [->,>=stealth] (0,0)--(4,0);
\draw [->,>=stealth] (0,0)--(0,4);
\draw [->,>=stealth] (0,0)--(1,1);
\draw [->,>=stealth] (1,1)--(2,3);
\draw [->,>=stealth] (0,0)--(2,3);
\draw (0,0) node {\tiny $\bullet$};
\draw (1,0) node {\tiny $\bullet$};
\draw (2,0) node {\tiny $\bullet$};
\draw (3,0) node {\tiny $\bullet$};
\draw (0,1) node {\tiny $\bullet$};
\draw (1,1) node {\tiny $\bullet$};
\draw (2,1) node {\tiny $\bullet$};
\draw (3,1) node {\tiny $\bullet$};
\draw (0,2) node {\tiny $\bullet$};
\draw (1,2) node {\tiny $\bullet$};
\draw (2,2) node {\tiny $\bullet$};
\draw (3,2) node {\tiny $\bullet$};
\draw (0,3) node {\tiny $\bullet$};
\draw (1,3) node {\tiny $\bullet$};
\draw (2,3) node {\tiny $\bullet$};
\draw (3,3) node {\tiny $\bullet$};
\end{tikzpicture}
\caption{Example of a commutation relation: $[P_{1,2},P_{1,1}]=P_{2,3}$.}
\end{figure}
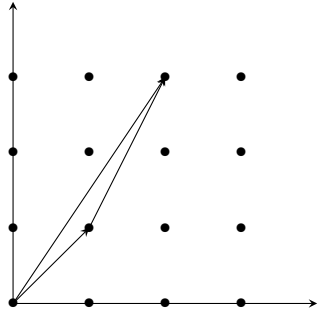

It would be interesting to find a topological interpretation of the equation \eqref{commutation}. Note that the complete statement of this equation requires the RHS of \eqref{commutation} to be multiplied by a central element, which must be included in the definition of the elliptic Hall algebra. For the sake of brevity, we will neglect this technical point, whose discussion can be found in \cite{BS}.

\subsection{Proof of the stabilization conjecture \ref{conj1}}
\label{sub:proofs}

In this Subsection, we will use the elliptic Hall algebra viewpoint to prove Conjecture \ref{conj1}. The inner product $\langle \cdot, \cdot \rangle_{q,t}$, the vacuum vector $|1\rangle$ and the evaluation vector $\Vv(u)$ are all preserved under the maps $\eta_N:V_N \longrightarrow V_{N-1}$. Therefore, all of these notions are compatible with the above limiting procedure, so we conclude that:
\begin{equation}
\label{eqn:superpoly}
\CP^\lambda_{n,m} := \langle \Vv(u)|P^{\lambda}_{n,m}|1\rangle_{q,t} 
\end{equation}
is the same whether we compute it in $V$ or $V_N$. Comparing this with \eqref{eqn:pose}, we conclude that:
$$
\CP^\lambda_{n,m} = \CP^{\lambda,N}_{n,m} \Big |_{u=t^N}
$$ 
This proves Conjecture \ref{conj1}. We will prove Conjecture \ref{conj2} in the next section, when we will realize the elliptic Hall algebra and the polynomial representation geometrically.

\section{The Hilbert scheme of points}
\label{sec:hilb}

\subsection{Basic Definitions} Let $\Hilb_d = \Hilb_d(\mathbb{C}^2)$ denote the moduli space of colength $d$ ideal sheaves $\CI \subset \CO_{\BC^2}$. It is a smooth and quasi-projective variety of dimension $2d$. Pushing forward the universal quotient sheaf on $\Hilb_d \times \BC^2$ gives rise to the rank $d$ tautological vector bundle on $\Hilb_d$:
$$
\CT|_{\CI} = \Gamma(\BC^2, \CO/\CI)
$$
The torus $T = \BC^* \times \BC^*$ acts on $\BC^2$ by dilations, and therefore acts on sheaves on $\BC^2$ by direct image. This gives an action of $T$ on the moduli spaces $\Hilb_d$. We can then consider the equivariant $K-$theory groups $K_T^*(\Hilb_d)$, all of which will be modules over:
$$
K_T^*(\textrm{pt}) = \BK_0 = \BC[q^{\pm 1}, t^{\pm 1}]
$$
where $q$ and $t$ are a fixed basis of the characters of $T = \BC^* \times \BC^*$. As early as the work of Nakajima, it became apparent that one needs to study the direct sum of these $K-$theory groups over all degrees $d$. In other words, we will consider the vector space:
$$
K = \bigoplus_{d\geq 0} K_T^*(\Hilb_d) \otimes_{\BK_0} \BK
$$
This vector space comes with the geometric pairing:
\begin{equation}
\label{eqn:geom}
(\cdot,\cdot): K \otimes K \longrightarrow \BK, \qquad \qquad (\alpha, \beta) = \pi_*(\alpha \otimes \beta)
\end{equation}
where $\pi:\Hilb_d \longrightarrow \text{pt}$ is the projection map. Feigin-Tsymbaliuk and Schiffmann-Vasserot independently proved the following result:

\begin{theorem} \label{thm:sv} (\cite{FT}, \cite{svhilb}) There exists a geometric action of the algebra $\sh$ on $K$, which becomes isomorphic to the polynomial representation $V$. \end{theorem}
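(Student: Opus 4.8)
The plan is three-fold: first realize a generating family of operators $P_{n,m}$ on $K$ by geometric correspondences; second check that they satisfy the defining relations of $\sh$, i.e.\ the Burban--Schiffmann relations \eqref{commutation} together with commutativity in collinear directions; and third build a $\BK$-linear identification $\iota\colon K\xrightarrow{\ \sim\ }V$ and verify that it intertwines the two $\sh$-actions. Since $\sh$ is generated by the ``Cartan'' operators $P_{0,\pm k}$ and the operators $P_{\pm 1,m}$ ($m\in\BZ$), with all relations among these recorded in \eqref{commutation}, it suffices to produce these particular operators geometrically and to verify \eqref{commutation} for them; then, for the intertwining in step three, it again suffices to match $\iota$ against this generating set.

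For the geometric operators I would take the following. $P_{0,k}$ acts on $\bigoplus_d K_T^*(\Hilb_d)$ by multiplication by the $k$-th Adams operation $\psi^{k}(\CT)$ of the tautological bundle (rescaled exactly as in \eqref{eqn:rescale}; for $k<0$ one uses the dual bundle), so the $P_{0,k}$ generate a commutative subalgebra of multiplication operators. And $P_{\pm1,m}$ acts through the one-step nested Hilbert scheme $\Hilb_{d,d+1}=\{\,\CI'\subset\CI:\ \dim \CI/\CI'=1\,\}$, which is smooth of dimension $2d+2$ and carries the two projections $p_-\colon \Hilb_{d,d+1}\to\Hilb_d$, $p_+\colon \Hilb_{d,d+1}\to\Hilb_{d+1}$ and the tautological line bundle $\CL$ with fibre $\CI/\CI'$; one sets $P_{1,m}(\alpha)=p_{+*}\big(p_-^{*}\alpha\otimes \CL^{m}\otimes(\text{normalizing class})\big)$ and dually for $P_{-1,m}$. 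The genuine content of the theorem is the verification that these operators obey \eqref{commutation}. I would carry this out by equivariant localization: the $T$-fixed points of $\Hilb_{|\mu|}$ are the monomial ideals $\CI_\mu$, $\mu\vdash|\mu|$, and those of the nested scheme are the pairs $\mu\subset\nu$ differing by one box; expressing the matrix coefficients of both sides of \eqref{commutation} in the fixed-point basis turns the identity into an equality of rational functions in $q,t$ built from tangent weights, which is exactly the kind of combinatorial identity established in \cite{svhilb} and \cite{FT} (and, in the form most useful to us, in \cite{Negut} via the flag Hilbert scheme, where the precise constants $\alpha_n$ and the operators $\theta_{n,m}$ emerge).

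For step three, note that $K_T^*(\Hilb_d)\otimes_{\BK_0}\BK$ is free of rank $p(d)=\dim_\BK V_d$ with basis the fixed-point classes $[\CI_\mu]$ (equivalently, via Bridgeland--King--Reid, $K\cong\bigoplus_d R(S_d)\otimes\BK=V$). I would pin down $\iota\colon K\to V$ by sending $[\CI_\mu]$ to the appropriately normalized modified Macdonald polynomial $\widetilde{H}_\mu$, the normalization being forced by requiring $\iota(1)=1$ (for the unit $1\in K_T^*(\Hilb_0)$) and that $\iota$ carry the geometric pairing \eqref{eqn:geom} to the Macdonald pairing $\langle\cdot,\cdot\rangle_{q,t}$. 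With this choice, $\iota$ intertwines $P_{0,k}$ with its algebraic counterpart because multiplication by $\psi^k(\CT)$ is diagonal in the fixed-point basis with eigenvalue $\sum_{\square\in\mu} w(\square)^{k}$, where $w(\square)$ is the $q,t$-weight of the box, which is precisely the $P_{0,k}$-eigenvalue on $\widetilde{H}_\mu$ (a stable-limit reading of \eqref{eigenvalues}); and $\iota$ intertwines the $P_{1,m}$ by matching the nested-Hilbert-scheme formula with the Pieri-type formula for these operators on Macdonald polynomials. Finally $K$ is cyclic over the subalgebra $\langle P_{1,m}\rangle$ generated from the vacuum $1\in K_T^*(\Hilb_0)$ — the nested correspondences push $K_d$ onto $K_{d+1}$ in the required sense, and the graded dimensions are $p(d)$ — and likewise $V$ is cyclic over $\langle P_{1,m}\rangle$ from $1\in V$; since $\iota$ fixes the cyclic vector and intertwines the full generating set $\{P_{0,\pm k},P_{\pm 1,m}\}$, it intertwines all of $\sh$, and being a dimension-preserving bijection it is an isomorphism of $\sh$-modules, which is the assertion.

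The main obstacle is the middle step: verifying \eqref{commutation} for the correspondence operators requires real control of the equivariant $K$-theory of the nested Hilbert scheme and of the combinatorics of its fixed loci, and the exact constants on the right-hand side drop out only after a delicate localization computation — this is the technical heart of \cite{svhilb,FT}. A secondary difficulty is formulating the identification $\iota$ intrinsically enough (through the pairing and the tautological classes) that the final matching on generators is not circular. Once the theorem is in place, the same geometric picture also yields Conjecture \ref{conj2}: the involution of $\BC^2$ exchanging the coordinate lines induces an involution of each $\Hilb_d$ that swaps $q\leftrightarrow t$ and transposes the indexing partitions, and transporting it through $\iota$ gives the stated duality.
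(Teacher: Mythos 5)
The paper does not prove this statement at all: it is quoted verbatim from \cite{FT} and \cite{svhilb}, and the surrounding text only records the consequences needed later (the fixed-point basis, Theorem \ref{thm:fixed}, and the explicit formulas \eqref{eqn:0}--\eqref{eqn:fix3} imported from \cite{Negut}). So there is no internal argument to compare against; what you have written is a faithful outline of the strategy actually carried out in the cited references — multiplication by tautological classes for the $P_{0,\pm k}$, Nakajima-type correspondences through the smooth one-step nested Hilbert scheme twisted by powers of $\CL$ for the $P_{\pm 1,m}$, verification of the relations by torus localization, and identification of the fixed-point basis with (suitably normalized) modified Macdonald polynomials. Your outline is consistent with everything the paper later uses, e.g.\ your eigenvalue $\sum_{\square\in\mu}w(\square)^k$ for $P_{0,k}$ matches \eqref{eqn:cartan} up to the rescaling constant of \eqref{eqn:rescale}.

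Two caveats. First, your normalization prescription is internally inconsistent: if you demand that $\iota$ carry the geometric pairing \eqref{eqn:geom} to the Macdonald pairing $\langle\cdot,\cdot\rangle_{q,t}$, you will not land on the modified Macdonald polynomials $\widetilde{H}_\mu$ — by \eqref{eqn:twist} the geometric pairing corresponds to a plethystically twisted pairing involving $\nabla^{-1}$ and $t\mapsto t^{-1}$, not to $\langle\cdot,\cdot\rangle_{q,t}$ itself. The normalization should instead be pinned down by matching the action on the vacuum and the Pieri-type formulas. Second, your reduction ``it suffices to verify \eqref{commutation} on the generating set $\{P_{0,\pm k},P_{\pm1,m}\}$'' silently assumes that the elliptic Hall algebra admits a presentation on these generators whose relations are all consequences of \eqref{commutation}; the Drinfeld-type presentation used in \cite{svhilb} in fact includes additional cubic Serre-type relations, which must also be checked for the geometric operators. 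Neither point derails the argument, but both would need to be repaired in a complete write-up; the genuine technical content (the localization verification of the relations) is, as you say, exactly what is done in \cite{svhilb} and \cite{FT}.
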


\subsection{Torus fixed points}

We have the following localization theorem in equivariant $K-$theory:
\begin{equation}
\label{eqn:localization}
K \cong \bigoplus_{d\geq 0} K_T^*(\Hilb^T_d) \otimes_{\BK_0} \BK
\end{equation}
There are finitely many torus fixed points in $\Hilb_d$, and they are all indexed by partitions $\lambda \vdash d$:
$$
\CI_\lambda = (x^{\lambda_1} y^0, x^{\lambda_2}y^1,\ldots) \subset \BC[x,y]
$$
The skyscraper sheaves at these fixed points give a basis $I_\lambda = [\CI_\lambda]$ of the right hand side of \eqref{eqn:localization}, and therefore also of the vector space $K$. This basis is orthogonal with respect to the pairing \eqref{eqn:geom}:
$$
(I_\lambda, I_\mu) = \delta_{\lambda,\mu} \cdot g_\lambda
$$
where:
\begin{equation}
\label{eqn:defg}
g_\lambda := \Lambda^\bullet(T^\vee_\lambda \Hilb_d) = \prod_{\square \in \lambda}(1-q^{a(\square)}t^{-l(\square)-1}) \prod_{\square \in \lambda}(1-q^{-a(\square)-1}t^{l(\square)})
\end{equation}

\begin{theorem} (e.g. \cite{haimlectures2,OP}) 
\label{thm:fixed} 
Under the isomorphism $K \cong V$ of Theorem \ref{thm:sv}, the classes $I_\lambda \in K$ correspond to the modified Macdonald polynomials:
$$
\Hl(q;t) = t^{n(\lambda)} \ph \left[J_{\lambda}(q;t^{-1}) \right] \in V
$$
where $\ph:V \longrightarrow V$ is the plethystic homomorphism defined by $p_k \longrightarrow \frac {p_k}{1-1/t^{k}}$, and:
\begin{equation}
\label{eqn:defn}
n(\la)=\sum_{\sq\in \la} l(\sq)=\sum_{\sq\in \la} l'(\sq)
\end{equation}

\end{theorem}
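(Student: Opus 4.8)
The plan is to exploit that the isomorphism of Theorem \ref{thm:sv} is an isomorphism of $\sh$-modules, so that to locate the image of a distinguished basis of $K$ it suffices to characterize that basis intrinsically inside the $\sh$-module $V$. Concretely, I would produce a large commutative subalgebra $\mathcal{A}\subset\sh$ which is simultaneously diagonalized by the fixed-point basis $\{I_\lambda\}$ on the geometric side and by $\{\Hl\}$ on the symmetric-function side, whose joint spectrum is simple, and then match eigenvalues.

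For $\mathcal{A}$ I would take the (stabilized) algebra of Macdonald operators: at finite level, the commutative algebra generated by the symmetric functions $L_{p_k}=Y_1^k+\cdots+Y_N^k$ of the Dunkl-Cherednik operators. By \eqref{eigenvalues} these act on $V_N$ diagonally in the Macdonald basis, $L_f\cdot P_\lambda=f(t^{\rho_N}q^\lambda)P_\lambda$; passing to the limit and applying the renormalization $t^{n(\lambda)}\ph[J_\lambda(q;t^{-1})]$ that defines $\Hl$, the operators of $\mathcal{A}$ become diagonal in the basis $\{\Hl\}$, and a short computation converts the eigenvalue $f(t^{\rho_N}q^\lambda)$ into $f$ evaluated at the multiset of box weights $\{\,q^{a'(\square)}t^{-l'(\square)}:\square\in\lambda\,\}$. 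This matching of eigenvalues with box contents is exactly the property for which the modified polynomials are designed; the substitution $t\mapsto t^{-1}$ and the plethysm $p_k\mapsto p_k/(1-t^{-k})$ are precisely what make it work. The joint spectrum is simple, since the multiset $\{q^{a'(\square)}t^{-l'(\square)}\}$ determines $\lambda$ and the grading by $|\lambda|$ separates the summands $K_T^*(\Hilb_d)$.

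On the geometric side I would use the explicit description of the action of Theorem \ref{thm:sv} (from \cite{FT,svhilb}, made further explicit in \cite{Negut2}) to show that under this action $\mathcal{A}$ is realized by operators of multiplication by tautological classes built from $\CT$; it is enough to check this on generators, beginning with the Macdonald operator $\delta_1$, which corresponds to multiplication by $\ch_1(\CT)$ up to an additive and a multiplicative scalar. By the localization isomorphism \eqref{eqn:localization} such multiplication operators are diagonal in the basis $\{I_\lambda\}$, and since $\CT|_{\CI_\lambda}=\Gamma(\BC^2,\CO/\CI_\lambda)$ has $T$-character $\sum_{\square\in\lambda}\chi_\square$ --- the monomial $x^{a'(\square)}y^{l'(\square)}$ contributing the weight $\chi_\square=q^{a'(\square)}t^{-l'(\square)}$, in the conventions of Figure \ref{fig} and Theorem \ref{thm:main} --- the eigenvalue of a generator of $\mathcal{A}$ on $I_\lambda$ is the corresponding symmetric function of the $\chi_\square$. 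This is the same eigenvalue system as on the $\{\Hl\}$ side, so by simplicity of the spectrum $I_\lambda=c_\lambda\Hl$ for some $c_\lambda\in\BK$. The scalars $c_\lambda$ are then fixed by one normalization check: comparing the coefficient of $m_{1^{|\lambda|}}$ (equivalently of $s_{(|\lambda|)}$) on the two sides, which is $1$ for $\Hl$ and, from the normalization of the isomorphism in \cite{FT,svhilb}, also $1$ for the image of $I_\lambda$; alternatively, by comparing the geometric pairing $(I_\lambda,I_\mu)=\delta_{\lambda\mu}\,g_\lambda$ with the norm of $\Hl$ in the corresponding $q,t$-deformed scalar product, observing that $g_\lambda$ is $h_\lambda h'_\lambda$ after $t\mapsto t^{-1}$ up to an explicit monomial.

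The step I expect to be the main obstacle is the bookkeeping in the middle: verifying that under the specific isomorphism of Theorem \ref{thm:sv} the subalgebra $\mathcal{A}$ really goes to multiplication by tautological classes, and reconciling the three conventions in play --- Cherednik's $t^{\rho_N}q^\lambda$ presentation of the Macdonald eigenvalues, the co-arm/co-leg presentation of the weights of $\CT|_{\CI_\lambda}$, and the twist $p_k\mapsto p_k/(1-t^{-k})$ together with $t\mapsto t^{-1}$ and the prefactor $t^{n(\lambda)}$ converting $J_\lambda$ into $\Hl$. The geometry (localization) and the representation theory (Cherednik's eigenvalue theorem, already quoted as \eqref{eigenvalues}) are both in hand; the content is entirely in the dictionary. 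If one prefers to bypass this, one can instead cite Haiman's identification of the fixed-point class $I_\lambda$ with $\Hl$ directly (as in \cite{haimlectures2,OP}) and merely verify that the $\sh$-action of Theorem \ref{thm:sv} is compatible with Haiman's, but this repackages the same computation rather than removing it.
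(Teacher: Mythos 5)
The paper offers no proof of Theorem \ref{thm:fixed}: it is quoted as a known result of Haiman (and Okounkov--Pandharipande), so there is nothing to compare your argument against line by line. That said, your outline is essentially the standard proof of this identification, and it is sound. The key inputs are all available in the paper itself: the ``Cartan'' part of $\sh$ acts on $K$ by multiplication by tautological classes (this is \eqref{eqn:0}, diagonalized on fixed points in \eqref{eqn:cartan} with eigenvalues the power sums of the box weights $\chi(\square)=q^{a'(\square)}t^{-l'(\square)}$ plus an explicit constant), while on the symmetric-function side the same elements, after the twist $t\mapsto t^{-1}$ and conjugation by $\ph$, are diagonal on $\ph[P_\lambda(q;t^{-1})]\propto\Hl$ with eigenvalues obtained from Cherednik's $f(t^{\rho_N}q^\lambda)$ by the telescoping $\sum_i t^{-k(i-1)}(q^{k\lambda_i}-1)=(q^k-1)\sum_{\square\in\lambda}\chi(\square)^k$; and the multiset of box weights determines $\lambda$, so the joint spectrum is simple. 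Two small points deserve care in the normalization step. First, the pairing comparison $(I_\lambda,I_\lambda)=g_\lambda$ versus $\|\Hl\|^2$ determines $c_\lambda$ only up to sign, so it cannot stand alone. Second, the coefficient of $m_{1^{|\lambda|}}$ in $\Hl$ is \emph{not} the same as the coefficient of $s_{(|\lambda|)}$ (the former is $\sum_\mu \widetilde{K}_{\mu\lambda}f^\mu$, the latter is $1$); the clean check is $\langle\Hl,s_{(|\lambda|)}\rangle=1$, matched against the normalization of the Schiffmann--Vasserot isomorphism on the vacuum-to-$I_\lambda$ matrix coefficient. With those repairs your sketch is a complete and correct route to the statement, equivalent in substance to the cited proofs rather than a genuinely new one.
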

 
The polynomials $\Hl$ were introduced by Garsia and Haiman \cite{GH93}, and they are called {\em modified Macdonald polynomials}. They behave nicely under transpose:
\begin{equation}
\label{hmu symmetry}
\widetilde{H}_{\la^{t}}(q;t)=\widetilde{H}_{\la}(t;q).
\end{equation}
Following \cite{BG}, let us define the operator $\nabla$ on $V$ by the formula:
\begin{equation}
\label{def:nabla}
\nabla \Hl=q^{n(\la^t)}t^{n(\la)}\Hl.
\end{equation}
Under the isomorphism $V\cong K$, $\nabla$ corresponds to the operator of multiplication by the line bundle $\mathcal{O}(1)$, while the geometric inner product \eqref{eqn:geom} corresponds to the following twist of the Macdonald inner product:
\begin{equation}
\label{eqn:twist}
(\cdot, \cdot) = (-q)^{-d} \left\langle \phh \left(\nabla^{-1}(\cdot)\right) , \phh(\cdot) \right\rangle_{q,t^{-1}} \qquad \text{on} \quad K_d \cong V_d 
\end{equation}

\subsection{Geometric operators}
\label{sub:fhilb}

The most interesting elements of $\sh$ to us are $P_{kn,km}$, for all $k\neq 0$ and $\gcd(n,m)=1$. In the current geometric setting, we will study their conjugates:
\begin{equation}
\label{eq:TP0}
\TP_{kn,km} = \ph \circ P_{kn,km}(q;t^{-1}) \circ \phh
\end{equation}

Let us now describe how these operators act on $K$, and we will start with the simplest case, namely $n=0$. We define the polynomial:
\begin{equation}
\label{eqn:tautclass}
\Lambda(z) = \sum_{i=0}^d (-z)^i [\Lambda^i \CT] \in K[z]
\end{equation}
where $\CT$ is the tautological rank $d$ vector bundle on $\Hilb_d$. As was shown in \cite{Negut}, we have:
\begin{equation}
\label{eqn:0}
\exp\left( \sum_{k \geq 0} \alpha_k \TP_{0,k} z^{k} \right) \cdot c = \frac {\Lambda\left(\frac qz\right)\Lambda\left(\frac tz\right)\Lambda\left(\frac 1{zqt}\right)}{\Lambda\left(\frac 1{zq}\right)\Lambda\left(\frac 1{zt}\right)\Lambda\left(\frac {qt}{z}\right)} \cdot c \qquad \forall c\in K
\end{equation}
where the constants $\alpha_k$ are given by \eqref{eq:alpha}. In order to compute how each $\TP_{0,\pm k}$ acts, we need to expand the right hand side in powers of $z$ and take the appropriate coefficient. We will give a more computationally useful description of $\TP_{0,k}$ in \eqref{eqn:cartan} below. 


For $n > 0$, we consider the flag Hilbert scheme:
$$
\Hilb_{d,d+kn} = \{\CI_0 \supset \CI_{1} \supset \ldots \supset \CI_{kn}\} \subset \Hilb_d \times \Hilb_{d+1} \times \ldots \times \Hilb_{d+kn}
$$
where the inclusions are all required to be supported at the same point of $\BC^2$. This variety comes with projection maps: 
$$
p^-: \Hilb_{d,d+kn} \longrightarrow \Hilb_d, \qquad p^+: \Hilb_{d,d+kn} \longrightarrow \Hilb_{d+kn}
$$
that forget all but the first/last ideal in the flag, and with tautological line bundles $\CL_1,\ldots,\CL_{kn}$ given by:
$$
\CL_i|_{\CI_0 \supset \ldots \supset \CI_{kn}} = \Gamma(\BC^2, \CI_{i-1}/\CI_{i})
$$ 
As explained in \cite{Negut}, the flag Hilbert scheme is not simply the iteration of $kn$ individual Nakajima correspondences. The reason for this is that the convolution product of Nakajima correspondences is not a complete intersection, and thus the intersection-theoretic composition is a codimension $kn-1$ class on the scheme $\Hilb_{d,d+kn}$. For this reason, the composition differs significantly from the fundamental class of $\Hilb_{d,d+kn}$. In {\em loc. cit.}, we have the following description of the operators $\TP_{kn,km}$ acting on $K$:
\begin{equation}
\label{eqn:cre}
\TP_{kn,km}\cdot c = \frac 1{[k]} \cdot p^{+}_* \left[\prod_{i=1}^{kn} [\CL_{kn+1-i}]^{\otimes S_{m/n}(i)} \otimes \left( \sum_{j=0}^{k-1}  \frac {(qt)^j \cdot [\CL_n][\CL_{2n}]\cdots [\CL_{jn}]}{[\CL_{n+1}][\CL_{2n+1}]\cdots [\CL_{jn+1}]} \right) \cdot p^{- *}(c)\right]
\end{equation}
where we denote $[k] = \frac {(q^k-1)(t^k-1)}{(q-1)(t-1)}$ and $S_{m/n}$ are the integral parts of \eqref{def:smn}. Note that the above differs by an overall normalizing constant from the operators of \cite{Negut}. The above formula has geometric meaning, and in the next section we will make it more suitable for computations. 





\subsection{Matrix Coefficients} Let us compute the matrix coefficients $(I_\lambda|\TP_{kn,km}|I_\mu )$. Define:
\begin{equation}
\label{eqn:defomega}
\omega(x) = \frac {(x-1)(x-qt)}{(x-q)(x-t)}
\end{equation}
The easiest case for us is $n=0$, since $\TP_{0,k}$ is multiplication with a given $K-$theory class, and thus is diagonal in the basis $I_\lambda$. More concretely, \eqref{eqn:0} yields:
$$
\exp\left( \sum_{k \geq 0} \alpha_k ( I_\lambda|\TP_{0,k}|I_\mu ) z^{k} \right) = \delta_{\lambda}^{\mu}  \prod_{\square \in \lambda}  \frac {\omega\left( \frac z{\chi(\square)} \right)}{\omega \left( \frac {\chi(\square)}z\right)} 
$$
where given a box $\square = (i,j)$ in a Young diagram, its \emph{weight} is $\chi(\square) = q^{i-1}t^{j-1}$. Taking the logarithm of the above gives us:
\begin{equation}
\label{eqn:cartan}
( I_\lambda|\TP_{0,k}|I_\mu ) = \delta_{\lambda}^{\mu} \left( \frac {(qt)^k}{(q^k-1)(t^k-1)} + \sum_{\square \in \lambda} \chi(\square)^{k} \right)
\end{equation}
The matrix coefficients of $\TP_{kn,km}$ for $n>0$ are written in terms of \textit{standard Young tableaux} (abbreviated $\syt$), so let us recall this notion. Given two Young diagrams $\rho_1 \supset \rho_2$, a $\syt$ between them is a way to index the boxes of $\rho_1 \backslash \rho_2$ with different numbers $1,\ldots,l$ such that any two numbers on the same row or column decrease as we go to the right or up. We will often write:
$$
\rho_1 = \rho_2 + \square_1 + \ldots + \square_l
$$ 
if we want to point out that the box indexed by $i$ is $\square_i$. Then \cite{Negut} gives us the formula:
$$
( I_\mu|\TP_{kn,km}|I_\lambda ) = \frac {\gamma^{kn}}{[k]} \cdot \frac {g_\lambda}{g_\mu} \sum^{\syt}_{\mu = \lambda + \square_1 + \ldots + \square_{kn}} \quad \left[\sum_{j=0}^{k-1}  (qt)^{j} \frac {\chi_{n(k-1)+1}\chi_{n(k-2)+1}\cdots \chi_{n(k-j)+1}}{\chi_{n(k-1)}\chi_{n(k-2)}\cdots \chi_{n(k-j)}} \right] \cdot 
$$
\begin{equation}
\label{eqn:fix3}
\cdot \frac {\prod_{i=1}^{kn} \chi_i^{S_{m/n}(i)} (qt \chi_i - 1 ) }{\left(1 - qt\frac {\chi_2}{\chi_1}\right)\cdots \left(1 - qt\frac {\chi_{kn}}{\chi_{kn-1}} \right)} \prod_{1\leq i<j \leq kn} \omega^{-1} \left(\frac {\chi_j}{\chi_i} \right)  \prod_{1\leq i \leq kn}^{\square \in \lambda} \omega^{-1} \left(\frac {\chi(\square)}{\chi_i} \right) 
\end{equation}
where $\chi_i=\chi(\square_i)$, $\gamma = \frac {(q-1)(t-1)}{qt(qt-1)}$ and $g_\lambda$ are the equivariant constants of \eqref{eqn:defg}.

\begin{remark}
Since the multiplication operators by power sums $p_n$ coincide with the operators $P_{n,0}$, equation \eqref{eqn:fix3} can be used to compute their matrix elements in the modified Macdonald basis. Indeed, this computation will agree with the Pieri rules for Macdonald polynomials \cite{Macdonald}, so \eqref{eqn:fix3} can be considered as a generalization of Pieri rules.
\end{remark}






\subsection{Refined invariants via Hilbert schemes}
\label{sub:refined}

Similarly to \eqref{eq:TP0}, we can define operators $\TP_{n,m}^{\lambda}$ as conjugates to $P_{n,m}^{\lambda}$ under $\ph$.
In particular, the operator $\TP_{1,0}^{\lambda}$ is conjugate by  $\ph$ to the multiplication operator by $P_{\lambda}(q,t^{-1})$.
Since $\ph$ is a ring homomorphism,  $\TP_{1,0}^{\lambda}$ is a multiplication operator by
$$
\ph(P_{\lambda}(q,t^{-1}))=\frac{t^{-n(\lambda)}\Hl(q,t)}{h_{\lambda}(q,t^{-1})}.
$$
and the action of $\slz$ implies:
\begin{equation}
\label{eq:TP}
\TP_{n,m}^{\la}:= \frac{t^{-n(\lambda)}}{h_{\lambda}(q,t^{-1})}\Hl\left[p_k\to \TP_{kn,km}\right].
\end{equation}
As was shown in \eqref{eqn:superpoly}, the super-polynomials are given by:
$$
\CP^\lambda_{n,m}(u,q,t) = \left\langle \Vv(u)|P^{\la}_{n,m}|1\right \rangle_{q,t},\ \text{where}\ \Vv(u) = \sum_{\mu \vdash n} \frac {J_\mu }{h_\mu(q;t) h'_\mu(q;t)}  \prod_{\square\in \mu}\left(t^{l'(\square)}-uq^{a'(\square)}\right) 
$$
Under the isomorphism of Theorem \ref{thm:fixed}, we can write the above as a matrix coefficient in $K \cong V$:
\begin{equation}
\label{eqn:super2}
\TCP^\lambda_{n,m}(u,q,t) =\left ( \Lambda(u) |\TP^{\lambda}_{n,m}|1 \right) ,\ \text{where}\ \Lambda(u) = \sum_{\mu \vdash n} \frac {I_\mu}{g_\mu}  \prod_{\square\in \mu}\left(1 - u q^{a'(\square)}t^{l'(\square)}\right) 
\end{equation}
and the change of variables is:
\begin{equation}
\label{eqn:change}
\TCP^\lambda_{n,m}(u,q,t) = (-q)^{-n|\lambda|}\CP^\lambda_{n,m}(u,q,t^{-1})
\end{equation}
By the equivariant localization formula, we see that the $K-$theory class $\Lambda(u)$ defined above as a sum of fixed points coincides with the exterior class of \eqref{eqn:tautclass}. We have thus expressed our super-polynomials in terms of the geometric operators \eqref{eqn:cre} on the $K-$theory of the Hilbert scheme. 

\begin{proof}[Proof of Theorem \ref{thm:main}] By \eqref{eqn:super2}, the uncolored DAHA -- superpolynomial is given by:
$$
\TCP_{n,m}(u,q,t) = ( \Lambda(u) | \TP_{n,m} | 1 ) = \sum_{\mu \vdash n} ( I_\mu | \TP_{n,m} | 1 ) \cdot \frac {\prod_{\square \in \mu} (1-u\chi(\square))}{g_\mu}
$$
We can use \eqref{eqn:fix3} to compute the above matrix coefficients, and we obtain:
\begin{equation}
\label{eqn:sumsyt}
\TCP_{n,m}(u,q,t) =  \sum^{\syt}_{\mu = \square_1+\ldots+\square_n} \frac {\gamma^n}{g_\mu} \cdot \frac {\prod_{i=1}^{n} \chi_i^{S_{m/n}(i)} (1-u\chi_i)(qt\chi_i - 1)}{\left(1 - qt \frac {\chi_{2}}{\chi_{1}}\right) \cdots  \left(1 - qt \frac {\chi_{n}}{\chi_{n-1}}\right)}\prod_{1\leq i < j\leq n} \omega^{-1} \left( \frac {\chi_j}{\chi_i} \right) 
\end{equation}
Changing $t\rightarrow t^{-1}$ gives us formula \eqref{eqn:for}.
\end{proof}

As for the colored knot invariant $\TCP_{n,m}^\lambda$ 
, it is also a particular matrix coefficient of the operator $\TP_{n,m}^\lambda$:
$$
\TCP^\lambda_{n,m}(u,q,t) = ( \Lambda(u) | \TP^\lambda_{n,m} | 1 ) = \sum_{\mu \vdash n|\lambda|} ( I_\mu | \TP^\lambda_{n,m} | 1 ) \cdot \frac {\prod_{\square \in \mu} (1 - u\chi(\square))}{g_\mu}
$$
By \eqref{eq:TP}, the operator $\TP_{n,m}^\lambda$ expands in the operators $\TP_{nk,mk}$ given by the same formula as modified Macdonald polynomials expand in the power-sum functions $p_k$. For computational purposes, the task then becomes to compute expressions of the form:
$$
\sum_{\mu \vdash n(k_1+\ldots+k_t)} ( I_\mu | \TP_{nk_1,mk_1}\cdots\TP_{nk_t,mk_t} | 1 ) \cdot \frac {\prod_{\square \in \mu} (1 - u\chi(\square))}{g_\mu}
$$
for any $k_1,\ldots,k_t$. These can be computed by iterating \eqref{eqn:fix3} and the result will be a sum over standard Young tableaux. The summand will be in general more complicated than \eqref{eqn:sumsyt}, but it can be taken care of by a computer. While we do not yet have a ``nice" formula suitable for writing down in a theoretical paper, we may use the geometric viewpoint to prove Cherednik's second conjecture:

\begin{proof}[Proof of Conjecture \ref{conj2}] In terms of the modified superpolynomials $\TCP$, the desired identity becomes:
$$
\TCP_{n,m}^{\lambda^t,red}(u,q,t) = \TCP_{n,m}^{\lambda,red}(u,t,q),\ \text{where}\ \TCP_{n,m}^{\lambda,red}(u,q,t):=(-q)^{-(n-1)|\lambda|}\CP_{n,m}^{\lambda,red}(u,q,t^{-1}).
$$
 Recall that 
$$
\CP_{1,0}^{\lambda}(u,q,t)=\frac{1}{h_{\lambda}(q,t)}\prod_{\sq\in \lambda}(t^{l'(\sq)}-uq^{a'(\sq)}),
$$
so
$$
\TCP_{1,0}^{\lambda}(u,q,t)=\frac{t^{-n(\lambda)}}{h_{\lambda}(q,t^{-1})}\prod_{\sq\in \lambda}(1-uq^{a'(\sq)}t^{l'(\sq)}),
$$
hence
$$
\TCP_{n,m}^{\lambda,red}(u,q,t)=\frac{\TCP_{n,m}^{\lambda}(u,q,t)}{\TCP_{1,0}^{\lambda}(u,q,t)}=
\frac{h_{\lambda}(q,t^{-1})(\Lambda(u) |\TP^{\lambda}_{n,m}|1 )}{t^{-n(\lambda)}\prod_{\sq\in \lambda}(1-uq^{a'(\sq)}t^{l'(\sq)})}.
$$
Therefore by \eqref{eq:TP} we get
$$
\TCP_{n,m}^{\lambda,red}(u,q,t)=\frac{\left(\Lambda(u) |\Hl(q,t)\left[p_k\to \TP_{km,kn}\right]|1\right)}{\prod_{\sq\in \lambda}(1-uq^{a'(\sq)}t^{l'(\sq)})}.
$$
By \eqref{hmu symmetry} $\Hl(q,t)=\Hlt(t,q)$, and by \eqref{eqn:fix3} the matrix coefficients of $\TP_{km,kn}$ are invariant under the 
switching  $q\leftrightarrow t$ and the transposition of the axis, as well as $\Lambda(u)$. 
\end{proof}

\subsection{Constant term formulas} Formula \eqref{eqn:sumsyt} can be repackaged as a contour integral. We may write:
$$
\TCP_{n,m}(u,q,t) = ( \Lambda(u) |\TP_{n,m}|1 )  = (1|\TP_{-n,m}|\Lambda(u) )
$$
where $\TP_{-n,m}$ is the adjoint of $\TP_{n,m}$. Formula (4.12) of \cite{Negut} gives us the following integral formula for this expression: 
\begin{equation}
\label{eqn:integral}
\TCP_{n,m}(u,q,t) = \int
\frac {\prod_{i=1}^n z_i^{S_{m/n}(i)} \cdot \frac {1 - u z_i}{z_i-1} }{\left(1-qt\frac {z_{2}}{z_{1}}\right) \cdots  \left(1-qt\frac {z_{n}}{z_{n-1}}\right)} \prod_{1\leq i < j\leq n} \omega \left( \frac {z_i}{z_j} \right)  \frac {dz_1}{2\pi i z_1}\cdots \frac {dz_n}{2\pi i z_n}  
\end{equation}
where the contours of the variables $z_i$ surround 1, with $z_1$ being the outermost and $z_{n}$ being the innermost (we take $q,t$ very close to 1). We can move the contours so that they surround $0$ and $\infty$, and then the integral comes down to the following residue computation:
$$
\TCP_{n,m}(u,q,t) = \left(\res_{z_{n}=0} - \res_{z_{n}=\infty} \right) \cdots \left( \res_{z_1=0} - \res_{z_1=\infty}\right) 
$$
\begin{equation}
\label{eqn:resknot}
\frac 1{z_1\cdots z_n} \cdot \frac {\prod_{i=1}^n z_i^{S_{m/n}(i)} \cdot \frac {1-uz_i}{z_i-1} }{\left(1 - qt\frac {z_{2}}{z_{1}}\right) \cdots \left(1 - qt\frac {z_{n}}{z_{n-1}}\right)} \prod_{1\leq i < j\leq n} \omega \left( \frac {z_i}{z_j} \right) 
\end{equation}
We will compute the above residue in section \ref{sub:tesler}, which will give another combinatorial way to compute $\TCP_{n,m}$. 

\subsection{} Let us say a few words about the viewpoint of Nakajima in \cite{Nak}, which relates knot invariants to the following map:
$$
\Psi_d:V \longrightarrow K_d, \qquad \qquad \Psi_d(f) = f(\CW^\vee)
$$
where $\CW$ is the universal bundle on $\Hilb_d$. As a $K-$theory class (and this will be sufficient for the purposes of the present paper), it is given by $[\CW] = 1 - (1-q)(1-t)[\CT]$. We may take the direct product of the above maps over all $d$ and define:
\begin{equation}
\label{eqn:s}
\Psi:V \longrightarrow K, \qquad \qquad \Psi = \prod_{d=0}^\infty \Psi_d
\end{equation} 
The map $\Psi$ defined above takes values in a certain completion of $K$, since we consider the direct product. Via equivariant localization, we see that:
\begin{equation}
\label{eqn:sloc}
\Psi(f) = \sum_{\lambda} \frac {I_\lambda}{g_\lambda} \cdot f\left( 1 - (1-q^{-1})(1-t^{-1})\sum_{\square \in \lambda} \chi(\square)^{-1} \right) 
\end{equation}
The right hand side of \eqref{eqn:sloc} uses plethystic notation of symmetric functions, which is described in \cite{Nak}. In \emph{loc. cit.}, Nakajima uses the map \eqref{eqn:s} to study knot invariants, essentially by using the viewpoint given by the left hand side of relation \eqref{eqn:lemma} where the $S$-matrix is realized as an operator on $V_N$. Our viewpoint, outlined in the previous sections, is to compute the same knot invariants by using the right hand side of \eqref{eqn:lemma} and interpret $S$ as an automorphism of the algebra $\sh$. The two perspectives produce significantly different formulas.


\section{Representations of the rational Cherednik algebra}
\label{sec:rat}

\subsection{The rational Cherednik algebra}

Rational Cherednik algebras were introduced in \cite{EG} as degenerations of the DAHA.

\begin{definition}
The rational Cherednik algebra of type $A_{n-1}$ with parameter $c$ is:
$$
\mathbf{H}_{c}=\mathbb{C}[\mathfrak{h}]\otimes \mathbb{C}[\mathfrak{h}^{*}]\rtimes \mathbb{C}[S_n],
$$
where $\mathfrak{h}$ is the Cartan subalgebra of $\mathfrak{sl}_n$, and the commutation relations between the various generators are:
$$[x,x']=0,\quad [y,y']=0,\quad gxg^{-1}=g(x), \quad gyg^{-1}=g(y),$$
$$[x,y]=(x,y)-c\sum_{s\in \mathcal{S}}(\alpha_s,x)(\alpha_s^{*},y)s,$$
for any $x\in \mathfrak{h}^{*}$, $y\in \mathfrak{h}$, $g\in S_n$. Here $\mathcal{S}$ denotes the set of all reflections in $S_n$ and $\alpha_s$ is the equation of the reflecting hyperplane of $s\in \mathcal{S}$.
\end{definition}

The polynomial representation also makes sense for rational Cherednik algebras, and its representation space is $M_{c}(n)=\mathbb{C}[\mathfrak{h}]$. The symmetric group acts naturally, $\mathfrak{h}$ acts by multiplication operators and elements of $\mathfrak{h}^{*}$ act by Dunkl operators:
$$ 
D_y=\partial_y-c\sum_{s\in \mathcal{S}}\frac{(\alpha_s,y)}{\alpha_s}(1-s)
$$
As a generalization of this construction, one can consider the standard module: 
$$
M_{c}(\lambda)=\tau_{\lambda}\otimes \mathbb{C}[\mathfrak{h}],
$$ 
where $\lambda$ is any partition and $\tau_{\lambda}$ is the corresponding irreducible representation of $S_n$. It is well-known that $M_{c}(\lambda)$ has a unique simple quotient $L_{c}(\lambda)$.

\subsection{Finite-dimensional representations}

It turns out that the representation theory of the rational Cherednik algebra depends crucially on the parameter $c$. For example, we have the following classification of finite-dimensional representations.

\begin{theorem}[\cite{BEG}] 
\label{def Lmn}
The algebra $\mathbf{H}_c$ only has finite-dimensional representations if $c = \frac mn$ for some $\gcd(m,n)=1$, in which case it has a unique irreducible representation $$L_{\frac mn}=L_{\frac mn}(n).$$
Furthermore (if $m>0$), one has $$\dim L_{\frac mn}=m^{n-1}, \qquad \dim (L_{\frac mn})^{S_n}=\frac{(m+n-1)!}{m!n!}.$$
\end{theorem}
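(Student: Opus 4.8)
The plan is to recall that this classification is essentially Theorem 1.2 (and the surrounding results) of Berest--Etingof--Ginzburg \cite{BEG}, and to indicate how each assertion follows. First I would recall the standard fact that finite-dimensional representations of $\mathbf{H}_c$ of type $A_{n-1}$ can only exist when the parameter $c$ is rational with denominator dividing $n$; this follows from the structure of the category $\mathcal{O}$ for rational Cherednik algebras, where the central character and the action of the Euler element force $c$ to be rational, and a more refined analysis (the ``$c$-function'' computation comparing the weights of $\mathfrak{h}$ and $\mathfrak{h}^*$ on the lowest-weight space) shows that $c=\frac{m}{n}$ with $\gcd(m,n)=1$ is the only possibility for a finite-dimensional quotient of a standard module $M_c(\lambda)$, and moreover forces $\lambda$ to be the trivial (or, dually, sign) representation, so $L_{\frac mn}=L_{\frac mn}(n)$ is the unique such irreducible.

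Next I would address uniqueness and the dimension formulas. Uniqueness: when $c=\frac mn$, one shows $L_c(\mathrm{triv})$ is finite-dimensional by exhibiting it as the quotient of $M_c(\mathrm{triv})=\mathbb{C}[\mathfrak{h}]$ by the ideal generated by a distinguished singular vector; the graded character can be computed explicitly. The cleanest route is via the identification (due to Berest--Etingof--Ginzburg, and also visible through the work connecting $\mathbf{H}_c$ to the spherical subalgebra and $W$-algebras) of $L_{\frac mn}$ with a space whose Hilbert series is $\prod_{i=1}^{n-1}\frac{1-q^{m+i}}{1-q^i}$ type product; evaluating the total dimension at $q=1$ gives $\dim L_{\frac mn}=m^{n-1}$. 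For the dimension of the $S_n$-invariants $(L_{\frac mn})^{S_n}$, one uses that the spherical subalgebra $e\mathbf{H}_c e$ acts on $eL_{\frac mn}=(L_{\frac mn})^{S_n}$, and that this space is identified with the coordinate ring of a finite scheme whose length is the number of lattice points in the relevant simplex, namely $\binom{m+n-1}{n}=\frac{(m+n-1)!}{m!\,n!}$; alternatively one extracts it as the multiplicity of the trivial $S_n$-representation in the graded decomposition of $L_{\frac mn}$.

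I would organize the write-up as: (1) cite \cite{BEG} for the existence-and-uniqueness dichotomy, giving a one-paragraph sketch of the category $\mathcal{O}$ / Euler element argument; (2) recall the explicit BGG-type resolution or the singular-vector construction that produces $L_{\frac mn}$ as a graded quotient of $\mathbb{C}[\mathfrak h]$, and read off its Hilbert series; (3) specialize the Hilbert series at $q=1$ to get $m^{n-1}$, and take the $S_n$-isotypic (trivial) part to get $\frac{(m+n-1)!}{m!n!}$.

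The main obstacle is that none of this is genuinely re-proved here — it is a citation to \cite{BEG} — so the real work is expository: stating precisely which normalization of the parameter $c$ is being used (the paper's convention in the definition of $\mathbf{H}_c$ must be matched against the convention in \cite{BEG}, since an off-by-a-shift in $c$, e.g. $c$ versus $c+\frac 12$ or $\frac mn$ versus $\frac{m}{n}$ with a different sign, changes which $c$ supports finite-dimensional modules), and making sure the two dimension formulas are quoted with the partition $\lambda=(n)$ (equivalently the trivial representation) rather than a general $\lambda$. I would therefore spend most of the proof pinning down conventions and then simply invoke \cite[Theorem~1.2]{BEG} (or its refinement giving the bigraded character, which is what Conjecture~\ref{conj:big} will need), rather than attempting an independent derivation.
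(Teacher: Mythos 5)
Your overall approach is the same as the paper's: Theorem \ref{def Lmn} is stated purely as a citation to \cite{BEG}, with no proof given, and your plan to invoke \cite[Theorem 1.2]{BEG} after pinning down the normalization of $c$ is exactly what the paper does (implicitly). So the strategy is fine.

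However, two of the numerical identities you invoke in your sketch are wrong, and you should fix them before writing anything up. First, $\binom{m+n-1}{n}=\frac{(m+n-1)!}{n!\,(m-1)!}$, which differs from the claimed $\frac{(m+n-1)!}{m!\,n!}$ by a factor of $m$ (e.g.\ $m=2$, $n=3$ gives $4$ versus $2$). The quantity $\frac{(m+n-1)!}{m!\,n!}$ is the rational Catalan number $\frac{1}{m+n}\binom{m+n}{n}$, not a plain binomial coefficient, so the ``lattice points in a simplex'' count you propose does not match. Second, the Hilbert-series product $\prod_{i=1}^{n-1}\frac{1-q^{m+i}}{1-q^{i}}$ evaluates at $q=1$ to $\binom{m+n-1}{n-1}$, not to $m^{n-1}$; the correct graded character of $L_{\frac mn}$ (quoted later in the paper from \cite{BEG}) is $\frac{q^{-(m-1)(n-1)/2}}{[m]_q}\phi_{[m]}(h_n)$, whose underlying Hilbert series is $\bigl(\frac{1-q^m}{1-q}\bigr)^{n-1}$ up to an overall shift, and this is what specializes to $m^{n-1}$. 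Since the theorem is a citation anyway, these slips do not invalidate the approach, but they would propagate into errors if you carried out steps (2) and (3) of your outline as written.
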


The representation $L_{\frac{m}{n}}$ is canonically graded and carries a grading-preserving action of $S_n$. In particular, it is a representation of $S_n$, so we can define its Frobenius character:
$$
\ch L_{\frac mn} = \frac{1}{n!}\sum_{\sigma\in S_n}\Tr_{L_{\frac mn}}(\sigma)p_1^{k_1}\ldots p_r^{k_r}
$$
where $p_i$ are power sums, and $k_i$ is the number of cycles of length $i$ in the permutation $\sigma$. The Frobenius character makes sense for any representation of $S_n$, and in particular the Frobenius character of the irreducible $\tau_\lambda$ equals the Schur polynomial $s_{\lambda}$.

\begin{theorem}[\cite{BEG}]
The graded Frobenius character of $L_{\frac mn}$ equals
$$
\ch_{q} L_{\frac mn}=\frac{q^{-\frac{(m-1)(n-1)}{2}}}{[m]_{q}}\phi_{[m]}(h_{n}),
$$
where $[m]_{q} = \frac {1-q^{m}}{1-q}$ and $\phi_{[m]}:\Lambda\to \Lambda$ is the homomorphism defined by $\phi_{[m]}(p_k)=p_k\frac{1-q^{km}}{1-q^{k}}$.
\end{theorem}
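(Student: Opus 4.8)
The plan is to recover this Berest--Etingof--Ginzburg theorem by resolving $L_{\frac mn}$ by standard modules and passing to graded Euler characteristics. The starting point is the submodule structure of the polynomial representation $M_c(n)=\mathbb{C}[\mathfrak h]$ at $c=\frac mn$: in degrees $<m$ the surjection $M_c(n)\twoheadrightarrow L_{\frac mn}$ is an isomorphism, while in degree $m$ its kernel is an $S_n$-subrepresentation $\Theta\subset M_c(n)_m$ isomorphic to the reflection representation $\mathfrak h^{*}$ and consisting of singular vectors, and the maximal proper submodule of $M_c(n)$ is generated by $\Theta$. (For $n=2$, $\Theta=\mathbb{C}x^m$ and this recovers $L_{\frac m2}=\mathbb{C}[x]/(x^m)$; in general it says $L_{\frac mn}=\mathbb{C}[\mathfrak h]/(\theta_1,\dots,\theta_{n-1})$ for a regular sequence $\theta_1,\dots,\theta_{n-1}$ spanning $\Theta$, each of degree $m$ — compatibly with the Hilbert series $[m]_q^{\,n-1}$ of the claimed formula.) Establishing this is the representation-theoretic content; it can be read off from the determinant of the contravariant form on $M_c(n)$ (the Dunkl determinant, refined into $S_n$-isotypic components), or from the $c$-function combinatorics governing the composition factors of $M_c(n)$.

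Granting this, the Koszul complex $M_c(n)\otimes\wedge^{\bullet}\Theta$ is a complex of $\mathbf H_c$-modules — because $\Theta$ consists of singular vectors — whose $i$-th term is the standard module $M_c(\wedge^i\mathfrak h^{*})$ with its grading shifted up by $im$, and it is exact because $\theta_1,\dots,\theta_{n-1}$ is a regular sequence (which is forced: a quotient of a polynomial ring in $n-1$ variables by an ideal generated by $n-1$ homogeneous elements is finite dimensional — as $L_{\frac mn}$ is, by the dimension theorem quoted above — exactly when those elements form a regular sequence). Thus
$$
\ch_q L_{\frac mn}=\sum_{i=0}^{n-1}(-1)^i q^{im}\,\ch_q M_c(\wedge^i\mathfrak h^{*}).
$$

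For the symmetric-function computation, two inputs are needed. First, since $\mathbb{C}[\mathfrak h]\otimes\mathbb{C}[p_1]\cong\mathbb{C}[x_1,\dots,x_n]$ as graded $S_n$-modules ($p_1=\sum_i x_i$ in degree $1$, trivial action), one has $\ch_q M_c(\tau)=(1-q)\,\psi_q(\ch\tau)$, where $\psi_q$ denotes the plethystic substitution $p_k\mapsto p_k/(1-q^k)$. Second, computing $\sum_j(-t)^j\Tr(\sigma\mid\wedge^j\mathbb{C}^n)=\prod_i(1-t^{\mu_i})$ for $\sigma$ of cycle type $\mu$ (eigenvalues of a permutation matrix being roots of unity), and using $\mathbb{C}^n\cong\mathfrak h\oplus\mathbb{C}$, gives
$$
\sum_{i=0}^{n-1}(-t)^i\,\ch(\wedge^i\mathfrak h^{*})=\frac{1}{1-t}\,h_n\big[(1-t)X\big].
$$
Substituting $t=q^m$, applying $(1-q)\psi_q$ to the displayed Euler characteristic, pulling the scalars through the linear map $\psi_q$, and composing the substitutions $X\mapsto(1-q^m)X$ and $X\mapsto X/(1-q)$ into $X\mapsto\tfrac{1-q^m}{1-q}X$, the alternating sum collapses to
$$
\ch_q L_{\frac mn}=\frac{1-q}{1-q^m}\,h_n\Big[\tfrac{1-q^m}{1-q}X\Big]=\frac{1}{[m]_q}\,\phi_{[m]}(h_n),
$$
since $\phi_{[m]}$ is exactly the plethysm by $\tfrac{1-q^m}{1-q}=1+q+\dots+q^{m-1}$. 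This is the asserted identity, up to the overall factor $q^{-\delta_{m,n}}$ with $\delta_{m,n}=\tfrac{(m-1)(n-1)}{2}$, which merely recenters the grading: the module $L_{\frac mn}$ has top degree $(m-1)(n-1)$, and its natural (balanced) grading places it symmetrically about $0$.

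The main obstacle is the first step: identifying the radical of $M_c(n)$ at $c=\frac mn$ as the ideal generated by a single degree-$m$ copy of $\mathfrak h^{*}$ made of singular vectors — equivalently, that $L_{\frac mn}$ is a complete intersection. Once that is in hand, the Koszul resolution is automatic and the rest is the short plethystic manipulation above, drawing only on the already-established finiteness and dimension facts.
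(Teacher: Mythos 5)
This theorem is quoted in the paper from \cite{BEG} with no proof supplied, so there is no internal argument to compare against; what you have written is essentially a reconstruction of the standard Berest--Etingof--Ginzburg argument via the BGG/Koszul resolution of $L_{\frac mn}$ by standard modules. The symmetric-function half of your argument is correct and complete: $\ch_q M_c(\tau)=(1-q)\,\psi_q(\ch\tau)$ follows from $\mathbb{C}[x_1,\dots,x_n]\cong\mathbb{C}[\mathfrak{h}]\otimes\mathbb{C}[p_1]$ together with the identity $\frac{1}{n!}\sum_\sigma\chi_\tau(\sigma)\prod_i\frac{p_{\mu_i}}{1-q^{\mu_i}}=(\ch\tau)\big[\frac{X}{1-q}\big]$; the exterior-power generating function $\sum_i(-t)^i\ch(\wedge^i\mathfrak{h}^*)=\frac{1}{1-t}h_n[(1-t)X]$ is the correct specialization of $\det(1-t\sigma)=\prod_i(1-t^{\mu_i})$; and composing the two plethysms at $t=q^m$ does collapse the Euler characteristic to $\frac{1}{[m]_q}\phi_{[m]}(h_n)$, with the prefactor $q^{-\delta_{m,n}}$ accounting only for the choice of grading normalization (consistent with top degree $(m-1)(n-1)$ and with the dimension counts $[m]_q^{n-1}$ and $\frac{[m+n-1]!}{[m]![n]!}$ of Theorem \ref{def Lmn}).

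The gap is exactly where you locate it, and it is not a small one: the existence of a copy of $\mathfrak{h}^*$ of singular vectors in degree $m$ of $M_{\frac mn}(\mathrm{triv})$, the fact that it generates the maximal proper submodule, and the $\mathbf{H}_c$-equivariance of the resulting Koszul maps $M_c(\wedge^i\mathfrak{h}^*)\langle im\rangle\to M_c(\wedge^{i-1}\mathfrak{h}^*)\langle (i-1)m\rangle$ (equivariance of the first map follows from singularity of $\Theta$, but the higher maps require a separate check) together constitute the actual content of the theorem being proved; gesturing at ``the Dunkl determinant or $c$-function combinatorics'' does not discharge them. Your observation that regularity of $\theta_1,\dots,\theta_{n-1}$ is forced by finite-dimensionality of $L_{\frac mn}$ is a legitimate shortcut (a homogeneous system of parameters in a polynomial ring is a regular sequence), but it presupposes both finite-dimensionality and the identification $L_{\frac mn}=\mathbb{C}[\mathfrak{h}]/(\Theta)$, the second of which is again the unproven input. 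As it stands the proposal is a correct derivation of the character formula \emph{from} the BGG resolution of \cite{BEG}, not an independent proof of the theorem.
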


For $m=n+1$, Gordon observed a close relation between the representation $L_{\frac{n+1}{n}}$ and Haiman's work. Gordon constructs a certain filtration on $L_{\frac{n+1}{n}}$ and proves the following result.

\begin{theorem}[\cite{Gordon}]
\label{thm:gordon}
The bigraded Frobenius character of $\gr L_{\frac{n+1}{n}}$ is given by the formula
$$\ch_{q,t} \gr L_{\frac{n+1}{n}}=\nabla e_n.$$
\end{theorem}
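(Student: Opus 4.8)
This is Gordon's theorem, so the plan is to reproduce his argument: one identifies the bigraded module $\gr L_{\frac{n+1}{n}}$ with (a twist of) the ring of diagonal coinvariants of $S_n$ and then invokes Haiman's formula for the bigraded Frobenius character of the latter. To begin, recall that $L := L_{\frac{n+1}{n}}$ is a cyclic graded quotient of the polynomial representation $M_{\frac{n+1}{n}}(\tau) = \mathbb{C}[\mathfrak{h}]$ of $\mathbf{H}_{\frac{n+1}{n}}$, where $\tau$ is the one-dimensional representation singled out in Theorem~\ref{def Lmn}; this equips $L$ with its internal grading by polynomial degree, which will furnish one of the two gradings on the associated graded.

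For the second grading I would introduce Gordon's $S_n$-stable filtration $\mathcal{F}$ on $L$, built from the action of the commutative subalgebra $\mathbb{C}[\mathfrak{h}^*] \subset \mathbf{H}_{\frac{n+1}{n}}$ by Dunkl operators (roughly, $\mathcal{F}$ records how many Dunkl operators are needed to annihilate a vector, while remembering the polynomial degree). The point of the construction is that the defining relation
$$
[x,y] = (x,y) - c\sum_{s \in \mathcal{S}}(\alpha_s,x)(\alpha_s^*,y)\,s, \qquad x \in \mathfrak{h}^*,\ y \in \mathfrak{h},
$$
degenerates on the associated graded, its right-hand side being of strictly lower order than $[x,y]$; consequently, after a suitable re-indexing of the two grading parameters, $\gr^{\mathcal{F}} L$ becomes a \emph{bigraded} module over the classical algebra $\mathbb{C}[\mathfrak{h} \oplus \mathfrak{h}^*] \rtimes S_n$, with $\mathfrak{h}$ and $\mathfrak{h}^*$ supplying the two gradings and with the $S_n$-character (hence the Frobenius character) unchanged from $L$.

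Next I would compare $\gr^{\mathcal{F}} L$ with the ring of diagonal coinvariants
$$
DH_n = \mathbb{C}[\mathfrak{h} \oplus \mathfrak{h}^*]\big/\big(\mathbb{C}[\mathfrak{h} \oplus \mathfrak{h}^*]^{S_n}_{+}\big),
$$
bigraded so that $\mathfrak{h}$ sits in degree $(1,0)$ and $\mathfrak{h}^*$ in degree $(0,1)$. One shows that $\gr^{\mathcal{F}} L$ is cyclic over $\mathbb{C}[\mathfrak{h}\oplus\mathfrak{h}^*]\rtimes S_n$ and that its generator is annihilated by the augmentation ideal of diagonal invariants — equivalently, that these invariant combinations, viewed in $\mathbf{H}_{\frac{n+1}{n}}$, act on $L$ with strictly lower order than their leading symbols; granting this, there is a surjection $DH_n \twoheadrightarrow \gr^{\mathcal{F}} L$ of bigraded $S_n$-modules (up to a twist by the sign representation, as dictated by the conventions). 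This is the heart of the matter and the step I expect to be the main obstacle: the ``unmixed'' invariants, built only from $\mathfrak{h}$ or only from $\mathfrak{h}^*$, are controlled by the lowest-weight structure of $M_c(\tau)$, but the mixed invariants $\sum_i x_i^r y_i^s$ require genuine information about the internal structure of the finite-dimensional module precisely at the distinguished parameter $c = \frac{n+1}{n}$, and this is where the special arithmetic of $c$ enters.

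Finally I would close the argument by a dimension count. Theorem~\ref{def Lmn} gives $\dim_{\mathbb{C}} L_{\frac{n+1}{n}} = (n+1)^{n-1}$, while Haiman's resolution of the $n!$-conjecture and his analysis of the isospectral Hilbert scheme give both $\dim_{\mathbb{C}} DH_n = (n+1)^{n-1}$ and $\ch_{q,t} DH_n = \nabla e_n$. Since $\dim \gr^{\mathcal{F}} L = \dim L = (n+1)^{n-1} = \dim DH_n$, the surjection above is forced to be an isomorphism of bigraded $S_n$-modules, whence $\ch_{q,t}\gr L_{\frac{n+1}{n}} = \ch_{q,t} DH_n = \nabla e_n$. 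It is worth stressing that this last step imports two deep external inputs: without Haiman's theorem one would obtain only the surjection and the bound $\dim DH_n \ge (n+1)^{n-1}$, and without the dimension formula of Theorem~\ref{def Lmn} one could not upgrade that surjection to an isomorphism.
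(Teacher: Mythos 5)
The paper does not actually prove this theorem---it is imported from \cite{Gordon}---and your outline is a faithful reconstruction of Gordon's own argument: pass to the associated graded of his filtration to obtain a cyclic bigraded module over $\mathbb{C}[\mathfrak{h}\oplus\mathfrak{h}^*]\rtimes S_n$, show the diagonal invariants kill the generator so that the (sign-twisted) diagonal coinvariant ring $DH_n$ surjects onto $\gr L_{\frac{n+1}{n}}$, and then force this surjection to be an isomorphism by comparing $\dim L_{\frac{n+1}{n}}=(n+1)^{n-1}$ from Theorem \ref{def Lmn} with Haiman's $\dim DH_n=(n+1)^{n-1}$ and $\ch_{q,t}DH_n=\nabla e_n$. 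You also correctly isolate both the genuinely hard step (the action of the mixed invariants $\sum_i x_i^r y_i^s$ on the associated graded) and the two indispensable external inputs, so as a sketch of the cited proof this is exactly right.
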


In \cite{GORS}, Gordon's filtration was generalized to all finite-dimensional representations $L_{\frac{m}{n}}$ and it was conjectured that the bigraded character $\gr L_{\frac{m}{n}}$ is tightly related to the Khovanov-Rozansky homology of the $(m,n)$ torus knot. In light of the conjectures of \cite{AS}, we formulate the following:

\begin{conjecture}
\label{conj:frob}
The bigraded Frobenius character of $\gr L_{\frac{m}{n}}$ is given by:
$$
\ch_{q,t} \gr L_{\frac{m}{n}} = \TP_{n,m}\cdot 1
$$
where $\TP_{n,m}$ are the transformed DAHA elements of Subsection \ref{sub:fhilb}.
\end{conjecture}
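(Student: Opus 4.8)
The strategy we envisage is to realize both sides of the claimed equality as $T$-equivariant $K$-theory classes on $\Hilb_n(\BC^2)$ and to compare them through the Gordon--Stafford functor. On the right-hand side, since $\gcd(m,n)=1$ the operator $\TP_{n,m}$ is the $k=1$ instance of \eqref{eqn:cre}, so that
$$
\TP_{n,m}\cdot 1 = p^{+}_*\left( \bigotimes_{i=1}^{n} [\CL_{n+1-i}]^{\otimes S_{m/n}(i)} \right) \in K_n \cong V_n,
$$
where $p^{+}:\Hilb_{0,n}\to\Hilb_n$ is the projection from the flag Hilbert scheme $\Hilb_{0,n}=\{\CO=\CI_0\supset\CI_1\supset\cdots\supset\CI_n\}$ and $S_{m/n}$ is as in \eqref{def:smn}; equivalently, pairing with $\Lambda(u)$ reproduces the explicit $\syt$ sum \eqref{eqn:sumsyt}. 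On the left-hand side, the Gordon--Stafford functor of \cite{GS,GS2} should send the finite-dimensional module $L_{\frac mn}$ over the rational Cherednik algebra with $c=\frac mn$ to a $T$-equivariant coherent sheaf $\CF_{m/n}$ on $\Hilb_n$ supported over a point of $\BC^2$, intertwining the $S_n$-action and the two gradings with the torus action; its bigraded Frobenius character is then read off as a character of $H^0(\Hilb_n,\CF_{m/n})$, i.e.\ as a class in $K_n\cong V_n$ via Theorems \ref{thm:sv} and \ref{thm:fixed}.

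The proof would then proceed as follows. \emph{Step 1:} set up the Gordon--Stafford correspondence in the $T$-equivariant (bigraded) setting for $c=\frac mn$, keeping track of both the $q$- and the $t$-grading; the $\BC^*$-equivariant statement is in \cite{GS,GS2}, and the extra $t$-grading should come precisely from the filtration on $L_{\frac mn}$ defined in \cite{GORS}. \emph{Step 2:} identify $\CF_{m/n}$; using the dimension data $\dim L_{\frac mn}=m^{n-1}$, $\dim(L_{\frac mn})^{S_n}=\frac{(m+n-1)!}{m!\,n!}$ and the graded character $\ch_q L_{\frac mn}=q^{-(m-1)(n-1)/2}[m]_{q}^{-1}\phi_{[m]}(h_n)$ of \cite{BEG} as constraints, one expects $\CF_{m/n}=p^{+}_*(\CL)$ for the line bundle $\CL=\bigotimes_{i=1}^{n}[\CL_{n+1-i}]^{\otimes S_{m/n}(i)}$ on $\Hilb_{0,n}$, whose twisting exponents $S_{m/n}(i)$ encode the ``staircase'' of slope $\frac mn$. \emph{Step 3:} show that the \cite{GORS} filtration on $L_{\frac mn}$, transported through Step 1, matches the grading on $p^{+}_*(\CL)$ induced by the ordering of the successive quotients $\CI_{i-1}/\CI_i$ in the flag; this is what upgrades the agreement of graded characters to agreement of \emph{bigraded} ones. \emph{Step 4:} compute the bigraded character of $\CF_{m/n}$ by equivariant localization on $\Hilb_{0,n}^{T}$ --- a sum over standard Young tableaux --- and identify the result with $\TP_{n,m}\cdot 1$ through \eqref{eqn:cre}--\eqref{eqn:sumsyt}.

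Two families of consistency checks accompany the argument. For $m=kn\pm1$ the functor of \cite{GS,GS2} is already available, and combined with Gordon's Theorem \ref{thm:gordon} --- for $m=n+1$, where the statement reads $\TP_{n,n+1}\cdot 1=\nabla e_n$ --- the conjecture is a theorem in these cases; the above program should recover this. Setting $t=1$, the bigraded character of $\gr L_{\frac mn}$ degenerates to the graded character $\ch_q L_{\frac mn}$ of \cite{BEG}, so one should check directly that the $t=1$ specialization of $\TP_{n,m}\cdot 1$ --- computable from \eqref{eqn:sumsyt} --- equals $q^{-(m-1)(n-1)/2}[m]_{q}^{-1}\phi_{[m]}(h_n)$; a clean proof of this specialization alone would already be a meaningful partial result, in the spirit of the $t=1$ statements established in Section \ref{sec:comb}.

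The main obstacle is Step 3. The filtration of \cite{GORS} is defined intrinsically on the representation $L_{\frac mn}$, and there is no \emph{a priori} reason for it to correspond, under Gordon--Stafford, to the geometrically natural filtration of $p^{+}_*(\CL)$ coming from the flag structure of $\Hilb_{0,n}$; establishing this compatibility amounts to understanding the Gordon--Stafford equivalence at the level of \emph{filtered} modules, which is not presently available, and without it one is thrown back on the special cases $m=kn\pm1$. A secondary difficulty is Step 1: the quotient functor of \cite{GS,GS2} is cleanest when $c=\frac1n$ (or an integer plus $\frac1n$), and extending it to all $c=\frac mn$ with complete control of the two-torus action is itself a nontrivial task.
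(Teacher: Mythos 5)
This statement is a \emph{conjecture} in the paper, not a theorem: the paper offers no proof, only supporting evidence (the case $m=n+1$ via Theorem \ref{thm:gordon} and Corollary \ref{pn1}, the specialization $t=q^{-1}$ from \cite{GORS}, the $q\leftrightarrow t$ symmetry, and compatibility with $\nabla$), together with the observation that Conjecture \ref{conj:frob} would follow from Conjecture \ref{conj:big}. Your proposal reproduces exactly this reduction: your Step 2 --- identifying the Gordon--Stafford image of $L_{\frac mn}$ with $p^{+}_*\bigl(\CL_n^{S_{m/n}(1)}\otimes\cdots\otimes\CL_1^{S_{m/n}(n)}\bigr)$ --- \emph{is} Conjecture \ref{conj:big}, and your Step 4 is the computation the paper already carries out in \eqref{eqn:cre}--\eqref{eqn:sumsyt}. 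So the proposal is a faithful restatement of the paper's own (conjectural) strategy rather than a proof; Steps 1--3, which you correctly single out as the obstacles (bigraded/filtered refinement of Gordon--Stafford for general $c=\frac mn$, identification of the image sheaf, and matching of the \cite{GORS} filtration with the geometric one), constitute precisely the open content. To your credit, you are explicit that without Step 3 one falls back on $m=kn\pm 1$, which is where the paper also stands.

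One concrete error in your consistency checks: the specialization of the bigraded character that recovers the singly graded character $\ch_q L_{\frac mn}$ of \cite{BEG} is $t=q^{-1}$, not $t=1$. This is stated in the paper (Conjecture \ref{conj:frob} ``was proved in \emph{loc.\ cit.}\ at $t=q^{-1}$'') and carried out in Section \ref{sec:CY limit}, where $\TP_{n,m}\cdot 1$ at $t=q^{-1}$ is matched with the \cite{BEG} character. The $t=1$ degeneration of $\TP_{n,m}\cdot 1$ computed in Section \ref{sec:comb} instead yields the area-generating function over Dyck paths and has no direct interpretation as the forgotten-filtration character. If you pursue the partial result you suggest, it should be the $t=q^{-1}$ specialization --- and that case is already known.
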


When $m=n+1$, the conjecture follows from Theorem \ref{thm:gordon} and Corollary \ref{pn1} below. Conjecture \ref{conj:frob} is also supported by numerical computations, and it is compatible with some structural properties. For example, the symmetry between the $q$ and $t$ gradings of the character has been proven in \cite{GORS}, and this symmetry is manifest in the operators $\TP_{n,m}$. Moreover, Conjecture \ref{conj:frob} was proved in \emph{loc. cit.} at $t=q^{-1}$, by showing that the knot invariant equals the singly graded Frobenius character, see also Section \ref{sec:CY limit} for details. It was also observed by Gordon and Stafford that: 
$$\ch_{q,t}\gr L_{\frac{m+n}{n}}=\nabla \ch_{q,t} \gr L_{\frac{m}{n}},$$
where $\nabla$ is the operator of \eqref{def:nabla}. This matches with the equality:
\begin{equation}
\label{eqn:nabla}
\TP_{n,n+m} = \nabla \TP_{n,m} \nabla^{-1}
\end{equation}
which follows easily from the definition of $\TP_{n,m}$ in Subsection \ref{sub:fhilb}.

\begin{remark}
The above only deals with the uncolored case, since the representation-theoretic interpretation of colored refined knot invariants has yet to be developed. It is proved in \cite{EGL} that at $t=q^{-1}$ the unrefined $\lambda$-colored invariant of the $(m,n)$ torus knot is given by the character of the infinite-dimensional irreducible representation $L_{\frac{m}{n}}(n\lambda)$. It would be interesting to define a filtration on $L_{\frac{m}{n}}(n\lambda)$ that matches their character with refined invariants.
\end{remark} 

\subsection{The Gordon-Stafford construction}

Conjecture \ref{conj:frob} is part of a correspondence between representations of the rational Cherednik algebra and coherent sheaves on Hilbert schemes, which we will now discuss. Kashiwara and Rouquier (\cite{KR}) have constructed a quantization of the Hilbert scheme depending on the parameter $c$, such that the category of coherent sheaves over this quantization is equivalent to the category of representations of $\mathbf{H}_{c}$. In characteristic $p$, the analogous construction has been carried out by Bezrukavnikov-Finkelberg-Ginzburg (\cite{BFG}). We will only be concerned with characteristic 0, in which case the initial result of Gordon and Stafford (\cite{GS}) claims the existence of a map:
\begin{equation}
\label{eqn:gs}
D^{b}\text{Rep}(\mathbf{H}_{c}) \longrightarrow D^{b}\text{Coh}(\Hilb_n)
\end{equation}
for all $c$. The category on the left consists of filtered representations (see \cite{GS} for the exact definition) of the rational Cherednik algebra. One may ask about the image of the unique irreducible finite-dimensional representation $L_{\frac mn}$ under the above assignemnt. During our discussions with Andrei Okounkov, the following conjecture was proposed:

\begin{conjecture}
\label{conj:big}

Under the Gordon-Stafford map \eqref{eqn:gs}, $L_{\frac mn}$ is sent to:
\begin{equation}
\label{eqn:gs2}
\CF_{\frac mn} := p_* \left(\CL_n^{S_{m/n}(1)} \otimes \ldots \otimes \CL_1^{S_{m/n}(n)} \right)
\end{equation}
where $p:\Hilb_{0,n} \longrightarrow \Hilb_n$ is the projection map from the flag Hilbert scheme to the Hilbert scheme (see Subsection \ref{sub:fhilb} for the notations), and $\CL_1,\ldots,\CL_n$ are the tautological line bundles.
\end{conjecture}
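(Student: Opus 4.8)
The plan is to split Conjecture~\ref{conj:big} into a $K$-theoretic shadow, which coincides with Conjecture~\ref{conj:frob}, and a lift of that identity to the derived category. For the first part, specializing \eqref{eqn:cre} to $d=0$ and $k=1$ (so that $[1]=1$ and the inner sum collapses to its $j=0$ term) gives the identity of $K$-theory classes
\begin{equation}
\label{eqn:classF}
\TP_{n,m}\cdot 1 = p_*\left(\CL_n^{S_{m/n}(1)}\otimes\cdots\otimes\CL_1^{S_{m/n}(n)}\right) = [\CF_{\frac mn}] \qquad \text{in} \quad K_n\cong V_n.
\end{equation}
Granting that the Gordon--Stafford equivalence \eqref{eqn:gs}, together with the filtration of \cite{GORS}, intertwines the two gradings on $\gr L_{\frac mn}$ with the $T$-weights on $\Hilb_n$, the $T$-equivariant Euler characteristic of $GS(L_{\frac mn})$ equals $\ch_{q,t}\gr L_{\frac mn}$; hence, modulo Conjecture~\ref{conj:frob}, the objects $GS(L_{\frac mn})$ and $\CF_{\frac mn}$ already have the same class, and Conjecture~\ref{conj:big} is the assertion that this class equality is realized by the specific sheaf $\CF_{\frac mn}$ on the nose. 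If one wants to avoid citing Conjecture~\ref{conj:frob}, the character identity is known for $m\equiv\pm1\bmod n$ by \cite{Gordon,GS,GS2}, and \eqref{eqn:nabla} together with the compatibility of $\nabla$ with $\otimes\,\CO(1)$ under \eqref{eqn:gs} propagates it through residues $\pm1$; the remaining residues $m\bmod n$ would require a separate input, such as parabolic induction/restriction for $\mathbf H_c$.

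To upgrade \eqref{eqn:classF} to an isomorphism of objects of $D^b\mathrm{Coh}(\Hilb_n)$, I would argue in three steps. \textbf{Support:} under the Kashiwara--Rouquier/Gordon--Stafford dictionary, finite-dimensional $\mathbf H_c$-modules correspond to complexes supported on the central locus $Z\subset\Hilb_n$ (the zero-fibre of Hilbert--Chow, i.e.\ the punctual locus), while on the geometric side every point of the flag Hilbert scheme $\Hilb_{0,n}$ is a punctual flag, so $p_*$ of any coherent sheaf --- in particular $\CF_{\frac mn}$ --- is supported on the image of $p$, which is precisely $Z$. \textbf{Purity:} one must show that $\CF_{\frac mn}$ is an honest sheaf, i.e.\ that $Rp_*\bigl(\CL_n^{S_{m/n}(1)}\otimes\cdots\otimes\CL_1^{S_{m/n}(n)}\bigr)$ is concentrated in cohomological degree $0$; since the exponents $S_{m/n}(i)$ are nonnegative and sum to $m$, one is pushing forward a ``positive'' line bundle, and the hope is to extract the required higher-cohomology vanishing either from a Frobenius-splitting/Kawamata--Viehweg-type result on the singular $\Hilb_{0,n}$, or --- more robustly --- by reducing to Haiman's analysis of the isospectral Hilbert scheme, since the $\CL_i$ are built from exactly the tautological data used in the proofs of the $n!$ and $(n+1)^{n-1}$ theorems. \textbf{Identification:} because $GS$ is a derived equivalence onto its essential image and $L_{\frac mn}$ is the \emph{unique} simple finite-dimensional $\mathbf H_c$-module, it suffices to realize $\CF_{\frac mn}$ as $GS$ of some finite-dimensional module and match classes via \eqref{eqn:classF}; alternatively, apply $GS$ to the presentation $0\to N\to\mathbb{C}[\mathfrak h]\to L_{\frac mn}\to 0$ of the polynomial representation, compute $GS(\mathbb{C}[\mathfrak h])$ and $GS(N)$ via a BGG-type resolution of $N$ by standard modules $M_c(\tau_\mu)$, and identify the cokernel with the explicit pushforward $\CF_{\frac mn}$.

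The main obstacle is making $GS$ explicit enough to track a single object such as $L_{\frac mn}$ for \emph{general} $m$: the functor is assembled from a chain of derived equivalences (quantum Hamiltonian reduction of $\mathcal D$-modules on the space of matrices, or the Kashiwara--Rouquier quantization), and following a prescribed module through them requires concrete control --- already the cases $m=kn\pm1$ treated in \cite{GS,GS2} were substantial. A secondary difficulty is the geometry of $\Hilb_{0,n}$, which is singular and, as stressed in \cite{Negut}, is not a transverse iteration of Nakajima correspondences; establishing the flatness/purity of $p$ and the cohomology vanishing of the second step is likely to be the technical heart of the argument. Finally, any route not relying on Conjecture~\ref{conj:frob} must establish the character identity $\ch_{q,t}\gr L_{\frac mn}=\TP_{n,m}\cdot 1$ for all residues $m\bmod n$, which is currently the missing combinatorial/representation-theoretic ingredient.
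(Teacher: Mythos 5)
The statement you are addressing is an open conjecture: the paper does not prove Conjecture \ref{conj:big}, and neither does your proposal. The only part of your argument that is actually a verification is the first paragraph, and it reproduces exactly the consistency check the paper itself gives: specializing \eqref{eqn:cre} to $d=0$, $k=1$ shows that the $K$-theory class of $\CF_{\frac mn}$ in \eqref{eqn:gs2} is $\TP_{n,m}\cdot 1$, so that Conjecture \ref{conj:big} implies Conjecture \ref{conj:frob}. Everything beyond that is a research plan in which each essential step is left open, as you yourself acknowledge; it should not be presented as a proof.

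Concretely, the gaps are these. (a) The character identity $\ch_{q,t}\gr L_{\frac mn}=\TP_{n,m}\cdot 1$ is precisely Conjecture \ref{conj:frob}, known only for $m\equiv\pm 1\bmod n$ and in the specializations $t=1$, $t=q^{-1}$; moreover your proposed propagation via $\nabla$ and \eqref{eqn:nabla} only moves $m\mapsto m+n$, i.e.\ it stays \emph{within} a residue class mod $n$, so it cannot ``propagate through residues'' to new values of $m\bmod n$ --- this is the same limitation as in \cite{GS,GS2}. (b) The purity step ($Rp_*$ concentrated in degree $0$) is asserted as a hope; the paper itself defers the analogous vanishing statement to future work, and $\Hilb_{0,n}$ must be treated as a DG/derived scheme, so ``positivity of the line bundle'' is not by itself an argument. (c) The identification step is the deepest gap: equality of classes in equivariant $K$-theory does not determine an object of $D^b\mathrm{Coh}(\Hilb_n)$, and your reduction ``it suffices to realize $\CF_{\frac mn}$ as $GS$ of some finite-dimensional module'' presupposes exactly the kind of explicit control of the Gordon--Stafford functor for general $m$ that is the whole difficulty. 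A minor imprecision: the image of $p$ is the locus of ideals whose support is a single \emph{moving} point of $\BC^2$, not the fibre of Hilbert--Chow over the origin, so the support argument needs to be phrased $T$-equivariantly. In short, your outline is a sensible map of the terrain, and its $K$-theoretic shadow agrees with the evidence the paper offers, but the conjecture remains unproved by it.
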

The flag Hilbert scheme together with the projection $p$ should be understood in the DG sense, see \cite{Negut} for details. In fact, the above conjecture is a particular case of a far-reaching conjectural framework of Bezrukavnikov--Okounkov, concerning filtrations on the derived category of the Hilbert scheme.

To support Conjecture \ref{conj:big}, note that the functor \eqref{eqn:gs} matches the bigraded character of representations with the biequivariant $K-$theory classes of coherent sheaves. Therefore, Conjecture \ref{conj:big} implies that:
$$
\ch_{q,t} \gr L_{\frac{m}{n}} = \left[ p_*\left(\CL_n^{S_{m/n}(1)} \otimes \cdots \otimes \CL_1^{S_{m/n}(n)}\right) \right]
$$
Comparing with \eqref{eqn:cre}, we see that the object in the right hand side is simply $\TP_{n,m} \cdot 1 \in K$. Therefore, Conjecture \ref{conj:big} implies Conjecture \ref{conj:frob}. 

\subsection{Affine Springer fibres}
\label{sec:affine flag}

Yet another geometric realization of $L_{\frac mn}$ is provided by the affine Springer fibres in the affine flag variety. Let us recall that the affine Grassmannian $\Gr_n$ of type $A_{n-1}$ can be defined as the moduli space of subspaces 
$V\subset \BC((t))$ satisfying $t^nV\subset V$ and a certain normalization condition. Similarly, the affine flag variety $\Fl_n$ can be defined as the moduli space of flags of subspaces in $\BC((t))$ of the form
$V_1\supset V_2\supset\ldots\supset V_{n}\supset V_{n+1}=t^{n}V_{1}$ such that $\dim V_i/V_{i+1}=1$. 

Recall that an affine permutation (of type $A_{n-1}$) is a bijection $\omega:\BZ\to \BZ$ such that $\omega(x+n)=\omega(x)+n$ for all $x$ and $\sum_{i=1}^{n}\omega(i)=\frac{n(n+1)}{2}.$
It is well known that $\Fl_n$ is stratified by the affine Schubert cells $\Sigma_{\omega}$ labelled by the affine permutations. The {\em homogeneous affine Springer fiber}: 
$$
\Sigma_{\frac mn} \subset \Fl_n \qquad (\text{resp. } \Sigma^{\Gr}_{\frac mn} \subset \Gr_n) 
$$ 
is defined as as set of flags (resp. subspaces) invariant under multiplication by $t^m$, where, as above, we assume that $\gcd(m,n)=1$. It is known to be a finite-dimensional projective variety \cite{GKM,KL,LS} and the total dimension of the homology equals (\cite{Hikita,LS}):
$$
\dim H^{*}(\Sigma_{\frac mn})=m^{n-1}, \qquad \dim H^{*}(\Sigma^{\Gr}_{\frac mn})=\frac{(m+n-1)!}{m!n!}.
$$
The similarity between this equation and Theorem \ref{def Lmn} suggests a relation between $L_{\frac mn}$ and the homology of $\Sigma_{\frac mn}$. Indeed, in \cite{OY,VV} the authors constructed geometric actions of the DAHA and trigonometric / rational Cherednik algebras on the space $H^{*}(\Sigma_{\frac mn})$ equipped with certain filtrations. In all these constructions, the spherical parts of the corresponding representations can be naturally identified with $H^{*}(\Sigma^{\Gr}_{\frac mn})$, also equipped with certain filtrations. It is important to mention that the homological grading on $H^{*}(\Sigma_{\frac mn})$ {\em does not} match the representation-theoretic grading on $L_{\frac mn}$. On the other hand, the {\em bigraded} character of $\gr H^{*}(\Sigma_{\frac mn})$
is expected to match the bigraded character of $\gr L_{\frac mn}$ after some regrading, when one takes into account both the geometric filtration on the homology and the generalized Gordon filtration on $L_{\frac mn}$ (see \cite{GORS} for the precise conjecture).

In the next section we give an explicit combinatorial counterpart of this conjecture (Conjecture \ref{conj:shuffle}), which can be explicitly verified on a computer. By Conjecture \ref{conj:frob}, the bigraded character of $\gr L_{\frac mn}$ is given by $\TP_{n,m}\cdot 1$ and hence can be computed combinatorially using \eqref{eqn:fix3}. On the other hand, one can try to compute the bigraded character of $\gr H^{*}(\Sigma_{\frac mn})$ using some natural basis in the homology, which is expected to be compatible with the geometric filtration.

\begin{definition}(\cite{GMV})
We call an affine permutation $\omega$ $m$--stable, if $\omega(x+m)>\omega(x)$ for all $x$.
\end{definition}

\begin{theorem}(\cite{GMV})
The intersection of an affine Schubert cell with the affine Springer fiber $\Sigma_{\frac mn}$ is either empty or isomorphic to an affine space. The nonempty intersections correspond to the $m$-stable affine permutations $\omega$, and the dimension of the corresponding cell in $\Sigma_{\frac mn}$ equals:
$$
\dim \Sigma_{\omega}\cap \Sigma_{\frac mn}=|\left\{(i,j)|\omega(i)<\omega(j), 0<i-j<m, 1\le j\le n\right\}|.
$$
\end{theorem}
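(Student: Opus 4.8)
The plan is to compute $\Sigma_\omega\cap\Sigma_{\frac mn}$ directly in the lattice model of the affine flag variety: write a general point of the Schubert cell $\Sigma_\omega\cong\mathbb{A}^{\ell(\omega)}$ in its normalized (``staircase'') coordinates, impose $t^m$-stability of every lattice in the flag, and observe that the resulting equations are triangular, so that $\Sigma_\omega\cap\Sigma_{\frac mn}$ is cut out as the graph of a morphism over a coordinate subspace --- hence empty or an affine space --- whose dimension is the number of coordinates that survive as free parameters. This is the Lusztig--Smelt strategy used for $\Sigma^{\Gr}_{\frac mn}$; the combinatorics is governed by the numerical semigroup $\langle m,n\rangle$ (which is where $\gcd(m,n)=1$ enters), and the new input is its translation into the language of $m$-stable affine permutations.

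Concretely, realize $\Fl_n$ as the space of chains of lattices $L_1\supsetneq L_2\supsetneq\cdots\supsetneq L_{n+1}=t^nL_1$ in $\BC((t))$, each $L_i$ finitely generated over $\BC[[t^n]]$, with $\dim L_i/L_{i+1}=1$ and $L_{i+n}=t^nL_i$. The $T$-fixed points are the monomial chains $p_\omega$, indexed by affine permutations $\omega$, and $\Fl_n=\bigsqcup_\omega\Sigma_\omega$ is the Iwahori stratification. A point of $\Sigma_\omega$ has a unique normalized basis obtained from $p_\omega$ by deforming the monomial generators along the directions indexed by the inversions of $\omega$, the parameters $c_{ij}$ being free --- one for each pair $i>j$ with $\omega(i)<\omega(j)$ and $1\le j\le n$, hence $\ell(\omega)$ of them --- and $n$-periodic. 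Now $\Sigma_{\frac mn}=\{L_\bullet : t^mL_i\subseteq L_i\ \forall i\}$, i.e.\ each $L_i$ is a module over the completed semigroup algebra of $\langle m,n\rangle$; substituting the normalized basis, the conditions $t^m\cdot(\text{generator})\in L_i$ become a finite polynomial system in the $c_{ij}$. Ordering the unknowns by the span $i-j$, one shows this system is triangular: since multiplication by $t^m$ shifts $t$-degrees by exactly $m$, every $c_{ij}$ with $i-j\ge m$ is forced to equal a polynomial in $c$'s of strictly smaller span, whereas the $c_{ij}$ with $0<i-j<m$ remain free. Hence $\Sigma_\omega\cap\Sigma_{\frac mn}$ is empty or the affine space on the coordinates indexed by $\{(i,j):\omega(i)<\omega(j),\ 0<i-j<m,\ 1\le j\le n\}$, which is the asserted dimension. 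Consistency of the system (non-emptiness) reduces to its vacuous part being satisfiable, equivalently to the base point $p_\omega$ itself being $t^m$-stable; unwinding $t^mL_i^\omega\subseteq L_i^\omega$ for the monomial flag yields the inequalities $\omega(x+m)>\omega(x)$ for all $x$ (up to whether the chosen bijection between affine permutations and monomial flags produces $\omega$ or $\omega^{-1}$), i.e.\ $m$-stability.

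The \textbf{main obstacle} is establishing the triangularity claim. A priori the $t^m$-stability equations are genuinely nonlinear and couple the coordinates all along the doubly-infinite chain, so one must (i) pin down the precise order on the $c_{ij}$ in which the equations can be solved without circularity --- a $\BZ$-twist of the $t$-adic filtration adapted to $\omega$ --- and (ii) verify that the ``pivot'' of each non-vacuous equation is invertible, so that a coordinate can genuinely be eliminated. This is exactly where $\gcd(m,n)=1$ is used --- it makes $\langle m,n\rangle$ a numerical semigroup with finitely many gaps, which bounds how far a deformation can propagate before $t^m$-stability forces it back --- together with $m$-stability of $\omega$. It is cleanest to first settle the analogous statement for $\Sigma^{\Gr}_{\frac mn}$, where it amounts to the combinatorics of $\langle m,n\rangle$-semimodules (the Gorsky--Mazin analysis), and then bootstrap to $\Fl_n$ step by step along $L_1\supset\cdots\supset L_n$. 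Alternatively, the stratification of $\Sigma_{\frac mn}$ into locally closed pieces follows cheaply from a contracting $\mathbb{G}_m$-action --- loop rotation twisted by a cocharacter of $T$ so as to preserve $t^m$-stability and the normalization --- with fixed points the $p_\omega$ lying in $\Sigma_{\frac mn}$, via Bialynicki--Birula; but since $\Sigma_{\frac mn}$ is singular this alone does not identify the pieces with affine spaces, and pinning down the dimension still requires the explicit coordinate computation above.
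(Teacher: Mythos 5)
The paper does not prove this statement: it is imported verbatim from \cite{GMV}, so there is no in-paper argument to compare against, and the only question is whether your sketch would actually establish the result. Your overall route --- normalized coordinates on the Iwahori orbit $\Sigma_\omega$ in the lattice model, imposition of $t^m$-stability, and a contracting $\mathbb{G}_m$-action (loop rotation twisted so as to fix $t^m$) to handle non-emptiness --- is indeed the Lusztig--Smelt-style strategy that \cite{GMV} carries out, so the approach is the right one. The identification of the leading-term parts of the stability equations with $m$-stability of the monomial flag (up to the $\omega$ versus $\omega^{-1}$ convention) is also correct.

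The gap is that the decisive step is asserted rather than proved, and the one sentence offered in its support does not hold up. The coordinates $c_{ij}$ are indexed by pairs of \emph{positions} $(i,j)$ in the flag, whereas ``multiplication by $t^m$ shifts $t$-degrees by exactly $m$'' is a statement about the \emph{values} $\omega(i),\omega(j)$ (the exponents of the leading monomials). The cutoff $i-j\ge m$ in the theorem is a condition on positions, not on $\omega(i)-\omega(j)$, and translating one into the other is exactly where the $m$-stability hypothesis $\omega(x+m)>\omega(x)$ and an induction on the elimination order must be used; this is the entire content of the dimension formula and it is skipped. Moreover, triangularity alone is not enough: you must also check that each eliminated coordinate appears with an invertible (in fact unit) pivot in exactly one new equation, and that the equations whose leading terms vanish identically impose \emph{no} relation among the surviving coordinates $c_{ij}$ with $0<i-j<m$ --- otherwise the intersection could be a proper closed subvariety of the expected affine space. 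You correctly name this as ``the main obstacle,'' but flagging the obstacle is not the same as overcoming it. As written, the proposal is a faithful outline of the proof in \cite{GMV} (or, for the $\Gr_n$ version, of \cite{GM1,LS}), not a self-contained proof.
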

In the homology of $\Sigma_{\frac mn}$ one then have a combinatorial basis corresponding to these cells, with the homological gradings given by the above equation. 
In \cite{GMV} the  $m$-stable affine permutations has been identified by an explicit bijection with another combinatorial object, the so-called $m/n$-parking functions, see Section \ref{sec:pf}. The dimension of a cell is translated to a certain combinatorial statistics $\dinv$ on parking functions, which has been obtained earlier by Hikita in \cite{Hikita}.
We also conjecture that the geometric filtration on the homology is compatible with the basis of cells, and admits an easy combinatorial description as the ``area" of the corresponding parking function. Modulo this conjecture, one can show that the bigraded character of $\gr H^{*}(\Sigma_{\frac mn})$ coincides with the combinatorial expression \eqref{eq:def Fr}.

One can similarly describe the cell decomposition of $\Sigma^{\Gr}_{\frac mn}$, which turns out to coincide with the compactified Jacobian of the plane curve singularity $\{x^m=y^n\}$. The affine Schubert cells in $\Gr_n$ cut out affine cells in $\Sigma^{\Gr}_{\frac mn}$, which can be labeled either by the $m$--stable permutations with additional restrictions
(\cite{GMV}) or by the Dyck paths in the $m\times n$ rectangle (\cite{GM1,GM2}). The dimension of such a cell can be rewritten as $\frac{(m-1)(n-1)}{2}-h_{+}(D)$, where $h_{+}(D)$ is an explicit combinatorial statistic on the corresponding Dyck path $D$
(see Section \ref{sec:catalan}).

\section{Combinatorial consequences}
\label{sec:comb}

\subsection{} In this section we focus on the combinatorial structure of uncolored refined knot polynomials.  By (\ref{eqn:sumsyt}), we have for $\gcd(n,m)=1$:
\begin{equation}
\label{eqn:adriano}
\TP_{n,m}\cdot 1 = \sum_{\lambda \vdash n}c_{n,m}(\lambda) \frac {\widetilde{H}_{\lambda}}{g_\lambda},
\end{equation}
where $c_{n,m}(\lambda)$ is the sum of terms $c_{n,m}(T)$ over all standard Young tableaux $T$ of shape $\lambda$:
\begin{equation}
\label{cnd}
c_{n,m}(T)=  \gamma^n\frac {\prod_{i=1}^{n} \chi_i^{S_{m/n}(i)} (qt\chi_i-1)}{\prod_{i=1}^{n-1}\left(1 - qt \frac {\chi_{i+1}}{\chi_i} \right)} \prod_{1\leq i < j\leq n} \omega^{-1} \left( \frac {\chi_j}{\chi_i} \right) 
\end{equation}
Recall that $\chi_i$ denotes the weight of box $i$ in the standard Young tableau $T$ and the constants $S_{m/n}(i)$ are defined by (\ref{def:smn}). Some of the coefficients $c_{n,m}(\lambda)$ have appeared in various sources: for $m=1$ they are remarkably simple and were computed first in \cite{GH} and later rediscovered in \cite{ORS,Sha}. For general $m$ and small $n$ some of these coefficients were computed in \cite[Section 5.3]{ORS}, \cite{DMMSS} and \cite{Sha}. Although the individual terms $c_{n,m}(T)$ have a nice factorized form, their sums $c_{n,m}(\lambda)$ look less attractive, for example:
$$
c_{7,2}(4,3) = (1-q)^2 (1-t)^2 (1-t^2)(1-t^3)(qt-1)
$$
$$
(q^3 t^3+q^3 t^2-q^3+q^2 t^5+2 q^2 t^4+q^2 t^3-q^2 t+q t^6+q t^5-q t^4-2 q t^3-q t^2+t^7-t^5-t^4-t^3)
$$
For hook shapes of size $n$, the coefficients $c_{n,m}(k,1,\ldots,1)$ are equal to a product of linear factors times a sum of $n$ terms. Explicitly, the following formula was computed in \cite{Pieri} using shuffle algebra machinery:
$$
c_{n,m}(k,1,\ldots,1) =\frac {(1 - q)(1 - t)}{q^nt^n}  \prod_{i=1}^{k-1} (1-q^i) \prod_{i=1}^{n-k} (1-t^i) \left(\sum_{i=0}^{n-1} q^{\sum_{j=0}^{k-1} \left \lfloor \frac {mj+i}n \right\rfloor}t^{\sum_{j=1}^{n-k} \left \lceil \frac {mj-i}n \right\rceil} \right)
$$
for all coprime $m$ and $n$, and all $1\leq k \leq n$. For small $k$, this agrees with the computations in \cite[Section 5.3]{ORS}.
Further, we give a combinatorial interpretation of uncolored refined knot invariants, generalizing the so-called {\em ``Shuffle Conjecture''} of \cite{HHRLU}. In \cite{GH} A. Garsia and M. Haiman introduced a bivariate deformation of Catalan numbers, and in \cite{GHagl} (see also \cite{HCatalan}) it was proved that it can be obtained as a weighted sum over Dyck paths. In \cite{GM1} (see also \cite{GM2}) this weighted sum was reinterpreted as a sum over cells in a certain affine Springer fiber and generalized to the rational case. We conjecture that the rational extension of $q,t-$Catalan numbers is given by the $u=0$ specialization of the refined invariant (Conjecture \ref{conj:catalan}) and thus can be computed as a certain sum over tableaux.
The coefficients of the full $u-$expansion of the refined invariant are given by the generalized Schr\"oder numbers. The combinatorial statistics for these numbers was conjectured in \cite{EHKK} and proved in \cite{HSchroeder}, and the rational extension of these statistics was conjectured in \cite{ORS}. We give a conjectural formula for them in terms of tableaux in Conjecture \ref{conj:schroeder}. 

It was conjectured in \cite{HHRLU} that the vector $\nabla e_n = \TP_{n,n+1} \cdot 1$ can be written as a certain sum over parking functions on $n$ cars, and it was shown that the combinatorial formulas for $q,t-$Catalan and $q,t-$Schr\"oder numbers follow from this conjecture. This combinatorial sum was reinterpreted in \cite{Hikita} as a weigthed sum over the cells in a certain parabolic affine Springer fiber, and a rational extension of the combinatorial statistics of \cite{HHRLU} has been proposed. We conjecture that the symmetric polynomials constructed in \cite{Hikita} coincide with $\TP_{n,m} \cdot 1$. This conjecture is supported by vast experimental data provided to us by Adriano Garsia. 

It has been conjectured in \cite{GORS} that the weighted sums of \cite{Hikita} (also \cite{GM1,ORS})
compute the bigraded Frobenius characters of the finite-dimensional representations $L_{\frac{m}{n}}$ (and their specializations), and the Poincar\'e polynomials of Khovanov-Rozansky homology of torus knots. On the other hand, it has been conjectured in \cite{AS,Ch} that refined knot invariants compute the Poincar\'e polynomials of Khovanov-Rozansky homology. Although all of these conjectures remain open, the ``rational Shuffle Conjecture" (Conjecture \ref{conj:shuffle}) provides a consistency check for them, since its left and right hand side are explicit combinatorial expressions independent of knot homology or filtration on $L_{\frac{m}{n}}$ .

Finally, we use the notion of Tesler matrices introduced in \cite{HTesler} (see also \cite{AGHRS,HTesler2}) to compute the residue \eqref{eqn:resknot}, and thus give an explicit formula for refined knot invariants. We will use this to prove the specialization of the rational Shuffle Conjecture at $t=1$.

\subsection{Generalized $q,t$-Catalan numbers}
\label{sec:catalan}

We define a $m/n$ Dyck path to be a lattice path in a $m \times n$ rectangle from the top left to the bottom right corner, which always stays below the diagonal connecting these two corners. Alternatively, a Dyck path is a Young diagram inscribed in the right triangle with vertices $(0,0),(m,0)$ and $(0,n)$. We denote the set of all $m/n$ Dyck paths by $Y_{m/n}$, and it is well known that: 
$$
|Y_{m/n}|=\frac{(m+n-1)!}{m!n!}.
$$
Given a Dyck path $D$, we define, following \cite{GM1} and \cite{GM2}, the statistic:
$$
h_{+}(D)=\left\{x\in D\ \vline\ \frac{a(x)}{l(x)+1}<\frac{m}{n}<\frac{a(x)+1}{l(x)}\right\}.
$$
We define the $m/n$ rational Catalan number as the following weighted sum over Dyck paths:
$$
C_{n,m}(q,t)=\sum_{D\in Y_{m/n}} q^{\delta_{m,n}-|D|}t^{h_{+}(D)},
$$
where $\delta_{m,n}=\frac{(m-1)(n-1)}{2}.$ The polynomial $C_{n,m}(q,t)$ is symmetric in $m$ and $n$ by construction, and it has been conjectured in \cite{GM2} that it is symmetric in $q$ and $t$ as well. Here we propose strengthening the $q,t-$symmetry conjecture by the following:

\begin{conjecture}
\label{conj:catalan}
The following relation holds:
$$
C_{n,m}(q,t) = (h_n|\TP_{n,m}|1)  = \sum_{\lambda \vdash n} \frac {c_{n,m}(\lambda)}{g_\lambda}
$$ 
where $h_n \in V$ is the complete symmetric function.
\end{conjecture}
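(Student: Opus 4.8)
The plan is to separate a formal reduction from the combinatorial core. The second equality is formal: by \eqref{eqn:adriano} we have $\TP_{n,m}\cdot 1=\sum_{\lambda\vdash n}c_{n,m}(\lambda)\,\widetilde{H}_\lambda/g_\lambda$, so the claim $(h_n|\TP_{n,m}|1)=\sum_\lambda c_{n,m}(\lambda)/g_\lambda$ reduces to the single fact that $(h_n|\widetilde{H}_\lambda)=1$ for every $\lambda\vdash n$ --- in the relevant pairing this is the classical fact that the one-row Schur function $s_{(n)}=h_n$ occurs in each modified Macdonald polynomial $\widetilde{H}_\lambda$ with coefficient $1$, equivalently that under $K\cong V$ the functional $(h_n|\cdot)$ is the one paired against by the structure sheaf $\Lambda(0)=[\CO_{\Hilb_n}]$, all of whose localized fixed-point coordinates are $1$. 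Consequently $(h_n|\TP_{n,m}|1)$ coincides with $\TCP_{n,m}(u,q,t)$ from \eqref{eqn:super2} at $u=0$, up to the normalization \eqref{eqn:change}, and the whole content of the conjecture is the identity
$$\TCP_{n,m}(0,q,t)=\sum_{D\in Y_{m/n}}q^{\delta_{m,n}-|D|}\,t^{h_{+}(D)}.$$

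For the left-hand side the tool is the residue presentation \eqref{eqn:resknot}. Setting $u=0$ there, we must evaluate
$$\left(\res_{z_n=0}-\res_{z_n=\infty}\right)\cdots\left(\res_{z_1=0}-\res_{z_1=\infty}\right)\ \frac{1}{z_1\cdots z_n}\cdot\frac{\prod_{i=1}^{n} z_i^{S_{m/n}(i)}\,\tfrac{1}{z_i-1}}{\prod_{i=1}^{n-1}\left(1-qt\tfrac{z_{i+1}}{z_i}\right)}\prod_{1\le i<j\le n}\omega\!\left(\frac{z_i}{z_j}\right).$$
I would expand each factor, perform the residues in the nested order from $z_1$ (outermost) to $z_n$ (innermost), and track at every stage the choice of pole at $0$ versus $\infty$ together with which pole $z_i=qz_j$ or $z_i=tz_j$ of the $\omega$-product is hit --- the pole $z_i=qtz_{i+1}$ of $1/(1-qtz_{i+1}/z_i)$ being already cancelled by the zero of $\omega(z_i/z_{i+1})$. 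The assertion to be proved is that the surviving terms are in bijection with lattice paths $D$ below the diagonal of the $m\times n$ rectangle, with the monomial $\prod z_i^{S_{m/n}(i)}$ and the shifts from the factors $1/(z_i-1)$ recording $q^{\delta_{m,n}-|D|}$ and the $\omega$-factors and $qt$-denominators assembling into $t^{h_{+}(D)}$.

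The hard part is exactly this bijection together with the statistic matching: carried out for all coprime $(m,n)$ it is the rational $q,t$--Catalan conjecture of \cite{GM2}, which I do not expect to settle here. Two specializations are, however, within reach. At $t=1$ the factor $\omega(x)=\tfrac{(x-1)(x-q)}{(x-q)(x-1)}$ degenerates to $1$, the integrand collapses to $\tfrac{1}{z_1\cdots z_n}\,\prod_{i} z_i^{S_{m/n}(i)}\big/\big[\prod_{i}(z_i-1)\prod_{i}(1-qz_{i+1}/z_i)\big]$, and the iterated residue --- equivalently the Tesler-matrix expansion of Section \ref{sub:tesler} --- can be summed in closed form; matching it with the Bizley--MacMahon product $\sum_{D\in Y_{m/n}}q^{\delta_{m,n}-|D|}=\frac{1}{[m+n]_q}\binom{m+n}{n}_{q}$ proves Conjecture \ref{conj:catalan} at $t=1$. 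For $m=n+1$, \eqref{eqn:nabla} gives $\TP_{n,n+1}\cdot 1=\nabla e_n$, so $(h_n|\TP_{n,n+1}|1)$ is, up to the normalization above, the classical $q,t$--Catalan number $\langle\nabla e_n,h_n\rangle=\langle\nabla e_n,e_n\rangle$; by the Garsia--Haglund theorem this equals $\sum_D q^{\mathrm{area}(D)}t^{\mathrm{bounce}(D)}$, and the rational sweep ($\zeta$) map turning $(\mathrm{area},\mathrm{bounce})$ into $(\delta_{m,n}-|D|,h_{+}(D))$ identifies it with $C_{n,n+1}(q,t)$.

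Finally I would record the consistency with Sections \ref{sec:rat}--\ref{sec:affine flag}, which both explains why the two sides should agree and pins down the normalization \eqref{eqn:change}. Granting Conjecture \ref{conj:frob}, $(h_n|\TP_{n,m}|1)$ is the bigraded multiplicity of the trivial $S_n$--representation in $\gr L_{\frac mn}$, hence the bigraded character of its spherical part; the affine Springer fibre picture identifies this with $\gr H^*(\Sigma^{\Gr}_{\frac mn})$, and since $\Sigma^{\Gr}_{\frac mn}$ is the compactified Jacobian of $\{x^m=y^n\}$ with an affine paving whose cells are indexed by $D\in Y_{m/n}$ in codimension $\delta_{m,n}-h_{+}(D)$, one of the two gradings of $C_{n,m}$ already matches on the nose, the other being the conjectural ``geometric filtration $=$ area'' statement of Section \ref{sec:affine flag}. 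This route is conditional but makes transparent why the algebraic and combinatorial sides should coincide.
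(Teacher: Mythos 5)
This statement is a \emph{conjecture}; the paper does not prove it in general, and you correctly refrain from claiming to. Your formal reduction is sound and matches the paper's setup: by \eqref{eqn:adriano} and the fact that the coefficient of $s_{(n)}=h_n$ in each $\widetilde{H}_\lambda$ is $1$ (equivalently, that $(h_n|\cdot)$ is pairing against $\Lambda(0)=\sum_\mu I_\mu/g_\mu$, all of whose fixed-point coordinates are $1$), the conjecture is exactly the identity $\TCP_{n,m}(0,q,t)=\sum_{D\in Y_{m/n}}q^{\delta_{m,n}-|D|}t^{h_+(D)}$. Your treatment of $m=n+1$ via Corollary \ref{pn1}, \cite{GHagl} and the zeta map is also what the paper records.

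However, your $t=1$ argument contains a genuine error. The identity you propose to match against,
$$
\sum_{D\in Y_{m/n}}q^{\delta_{m,n}-|D|}=\frac{1}{[m+n]_q}\binom{m+n}{n}_{q},
$$
is false: the left-hand side is the coarea generating function (a rational Carlitz--Riordan $q$-analogue), while the right-hand side is the MacMahon-type product. Already for $(m,n)=(3,2)$ the left side is $1+q$ and the right side is $1+q^{2}$. The product formula is what appears at the \emph{other} specialization $t=q^{-1}$, cf.\ \eqref{eq:qfactorial}, not at $t=1$; you have conflated the two degenerations. The paper's $t=1$ proof needs no closed form: starting from the Tesler expansion \eqref{eqn:tes1} and the evaluations $A(x)|_{t=1}=\delta_x^0$, $B(x)|_{t=1}=q^{x}$, only quasi-diagonal Tesler matrices survive, and these are matched \emph{bijectively} with Dyck paths, the diagonal entries $x_i^i$ giving the horizontal steps and $\sum_i x_{i+1}^i$ giving exactly the coarea, so the $q$-weights agree term by term. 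Replacing your ``sum in closed form'' step by this bijection recovers the paper's argument; as written, your step would fail. You also omit the $t=q^{-1}$ case, which the paper does establish (via the degeneration to the graded character of $L_{\frac mn}$), and which is the specialization your product formula actually computes.
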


Indeed, the matrix coefficient in the right hand side of the above relation is symmetric in $q$ and $t$, since \eqref{cnd} implies:
$$
c_{n,m}(\lambda;q,t) = c_{n,m}(\lambda^t;t,q)
$$
Therefore, Conjecture \ref{conj:catalan} implies that:
$$
C_{n,m}(q,t) = C_{n,m}(t,q)
$$
Note that for $m=n+1$, Conjecture \ref{conj:catalan} follows from the results of \cite{GHagl} and Corollary \ref{pn1}.

In \cite{ORS}, the polynomials $C_{n,m}(q,t)$ have been extended to accommodate the extra variable $u$. Given a Dyck path $D$ and an internal vertex $P$, we define $\beta(P)$ to be the number of horizontal segments of $D$ intersected by the line passing through $P$ and parallel to the diagonal (see Figure \ref{fig:beta}). Let $v(D)$ denote the set of internal vertices of $D$.

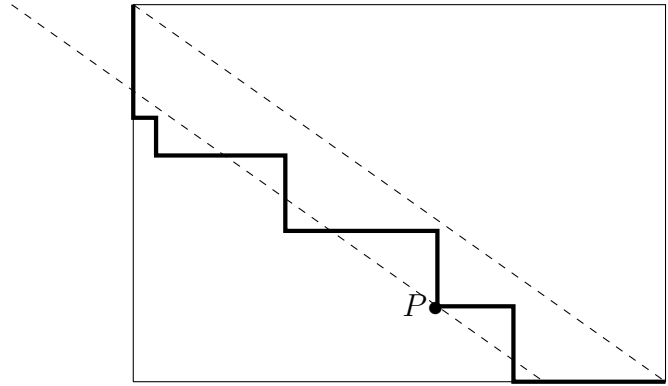
\begin{figure}[ht]
\begin{tikzpicture}
\draw (0,0)--(0,5)--(7,5)--(7,0)--(0,0);
\draw [dashed] (0,5)--(7,0);
\draw [ultra thick] (0,5)--(0,3.5)--(0.3,3.5)--(0.3,3)--(2,3)--(2,2)--(4,2)--(4,1)--(5,1)--(5,0)--(7,0);
\draw (3.81,1.01) node {$P \bullet$};
\draw [dashed] (-1.6,5)--(5.4,0);
\end{tikzpicture}
\caption{Computation of the statistic $\beta(P)$}
\label{fig:beta}
\end{figure}

\begin{conjecture}
\label{conj:schroeder}
The following equation holds:
$$
\sum_{D\in Y_{m/n}} q^{\delta_{m,n}-|D|}t^{h_{+}(D)}\prod_{P\in v(D)}(1-ut^{-\beta(P)}) = 
\TCP_{n,m}(u,q,t)
$$
\end{conjecture}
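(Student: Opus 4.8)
The full statement is open; what I would establish are the two regimes that the present machinery can reach — the specialization $t=1$ and the case $m=n+1$ — together with an explanation of the obstruction in general. In both regimes the starting point is to replace the right-hand side $\TCP_{n,m}(u,q,t)$ by the iterated residue \eqref{eqn:resknot}, and then to expand that residue as an explicit sum over combinatorial data. The heuristic is that each choice between $\res_{z_i=0}$ and $\res_{z_i=\infty}$, together with a choice of which pole of the integrand produces the residue, contributes a signed monomial in $q,t,u$; admissible sequences of such choices should be in bijection with lattice paths $D\in Y_{m/n}$, with the shift exponents $S_{m/n}(i)$ forcing the poles into the pattern cut out by $D$ under the diagonal of the $m\times n$ rectangle (giving the weight $q^{\delta_{m,n}-|D|}t^{h_+(D)}$), the numerators $(1-uz_i)/(z_i-1)$ producing a factor $(1-ut^{-\beta(P)})$ for every internal vertex $P\in v(D)$, and the cross-factors $\prod_{i<j}\omega(z_i/z_j)$ contributing the finer part of the $t$-statistic.

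\noindent\textbf{The case $t=1$.}
Here the residue becomes explicitly computable, since $\omega(x)|_{t=1}=\dfrac{(x-1)(x-q)}{(x-q)(x-1)}=1$, so all cross-factors in \eqref{eqn:resknot} disappear and the integrand reduces to $\dfrac{1}{z_1\cdots z_n}\cdot\dfrac{\prod_i z_i^{S_{m/n}(i)}(1-uz_i)/(z_i-1)}{\prod_{i=1}^{n-1}(1-qz_{i+1}/z_i)}$. The iterated residue of this rational function is organized by \emph{Tesler matrices}: one records in the $(i,i{+}1)$ slot the interaction between the pole extracted for $z_i$ and the factor $1-qz_{i+1}/z_i$, so that $\TCP_{n,m}(u,q,1)$ becomes a sum over Tesler matrices with prescribed hook sums, evaluated through the Tesler polynomial identity of \cite{HTesler}. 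On the other side, at $t=1$ the $t$-statistics drop out and the combinatorial sum collapses to $\sum_{D\in Y_{m/n}}q^{\delta_{m,n}-|D|}(1-u)^{|v(D)|}$. What remains is a purely combinatorial matching of Tesler matrices with prescribed hook sums against lattice paths below the diagonal, with $|v(D)|$ counting the nonzero off-diagonal entries; this is the content of Section~\ref{sub:tesler}.

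\noindent\textbf{The case $m=n+1$.}
Here $\TP_{n,n+1}\cdot 1=\nabla e_n$ by \eqref{eqn:nabla} and Corollary~\ref{pn1}, and expanding $(\Lambda(u)\,|\,\TP_{n,n+1}\,|\,1)$ via the twist \eqref{eqn:twist} rewrites $\TCP_{n,n+1}(u,q,t)$ as the generating function $\sum_d(-u)^d\langle\nabla e_n,\,e_{n-d}h_d\rangle_{q,t}$ of $q,t$-Schr\"oder numbers. Haglund's theorem \cite{HSchroeder}, conjectured in \cite{EHKK}, identifies this with exactly the $m=n+1$ specialization of the left-hand side, so Conjecture~\ref{conj:schroeder} in this case is a repackaging of \cite{HSchroeder}.

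\noindent\textbf{The main obstacle.}
For general $t$ the factors $\omega(z_i/z_j)$ in \eqref{eqn:resknot} are genuine rational functions, so expanding the iterated residue produces a proliferation of cross-terms whose cancellations I do not know how to organize into the Dyck-path sum while simultaneously keeping track of $h_+(D)$ and the co-area $\delta_{m,n}-|D|$; in effect, no $q,t$-refinement of the Tesler-matrix bookkeeping is available. The structurally correct proof should instead exhibit both sides as the bigraded character of a single filtered object — the affine Springer fibre / compactified Jacobian of Section~\ref{sec:affine flag}, whose affine cells are indexed by the Dyck paths $D$ and carry the statistics $|D|$ and $h_+(D)$, matched against the flag Hilbert scheme computation of $\TP_{n,m}\cdot 1$ from Subsection~\ref{sub:fhilb}, or equivalently via the Gordon--Stafford picture of Conjecture~\ref{conj:big} — but this requires the still-conjectural compatibility of the geometric filtration with that cell decomposition. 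The parking-function route through Conjecture~\ref{conj:shuffle} is equally blocked, for the same reason. Hence only the two specializations above are within reach of the methods of this paper.
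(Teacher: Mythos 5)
Your plan coincides with what the paper actually does: the statement is left as a conjecture, the $t=1$ specialization is proved exactly via the Tesler-matrix evaluation of the residue \eqref{eqn:resknot} (only quasi-diagonal matrices survive because $A(k)|_{t=1}=\delta_k^0$ and $B(k)|_{t=1}=q^k$, and these biject with Dyck paths via the horizontal steps $x_i^i$), while the $m=n+1$ case is attributed to \cite{HSchroeder}. One small correction to your bookkeeping: the $(1-u)$ factors, i.e.\ the vertices in $v(D)$, correspond to the nonzero \emph{diagonal} entries $x_i^i>0$ of the quasi-diagonal Tesler matrix, whereas the off-diagonal entries $x_{i+1}^i$ contribute only the area weight $q^{x_{i+1}^i}$.
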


For $m=n+1$ a similar identity was conjectured in \cite{EHKK} and proved  in \cite{HSchroeder} (see \cite{HBook} and \cite[Section A.3]{ORS} for more details).
 
\subsection{The Rational Shuffle Conjecture}
\label{sec:pf}

The symmetric polynomial $\TP_{n,m}\cdot 1 \in V$ has a combinatorial interpretation.
Let us define a $m/n$ parking function as a function: 
$$
f:\{1,\ldots,m\}\to \{1,\ldots,n\},\ \text{such that }\ |f^{-1}([1,i])|\ge \frac{mi}{n} \quad \forall \ i
$$ 
Alternatively, a parking function can be presented as a standard Young tableau $F$ of skew shape $(D+1^{m})\setminus D$, where $D$ is a $m/n$ Dyck path. Given such a tableau, the function $f$ can be reconstructed by sending each $i$ to the $x$-coordinate of the box labeled by $i$ in the tableau. It is clear that this correspondence is bijective.

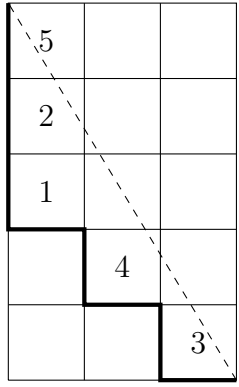
\begin{figure}[ht]
\begin{tikzpicture}
\draw (0,0)--(0,5)--(3,5)--(3,0)--(0,0);
\draw (0,1)--(3,1);
\draw (0,2)--(3,2);
\draw (0,3)--(3,3);
\draw (0,4)--(3,4);
\draw (1,0)--(1,5);
\draw (2,0)--(2,5);
\draw [dashed] (3,0)--(0,5);
\draw [ultra thick] (0,5)--(0,2)--(1,2)--(1,1)--(2,1)--(2,0)--(3,0);
\draw (0.5,4.5) node {$5$};
\draw (0.5,3.5) node {$2$};
\draw (0.5,2.5) node {$1$};
\draw (1.5,1.5) node {$4$};
\draw (2.5,0.5) node {$3$};
\end{tikzpicture}
\caption{A $3/5$ Dyck path and a parking function}
\label{expf}
\end{figure}

Given a box $x=(i,j)$, let us define $r(x)=mn-m-n-mi-nj$. Given a $m/n$ parking function $F$,
define: 
$$
s(F)=|\left\{(x,y)\ :\ x>y \ \text{ such that } \ r(F(y))<r(F(x))<r(F(x))+m\right\}|
$$
and define $s_{\max}(D)$ to be the maximum of $s(F)$ over all parking functions $F$ constructed on the Dyck path $D$.
Let
$$
\dinv(F)=s(F)+h_{+}(D)-s_{\max}(D).
$$
Finally, define the descent set of $F$ by:
$$
\Des(F)=\{x\ :\ r(F(x))>r(F(x+1))\}
$$
Let $\pf_{m/n}$ denote the set of all $m/n$ parking functions. The following symmetric function has been independently constructed in \cite{armstrong} and \cite{Hikita}:
\begin{equation}
\label{eq:def Fr}
\Fr_{n,m}=\sum_{F\in \pf_{m/n}}q^{\delta_{m,n}-|D|}t^{\dinv(F)}Q_{\Des(F)},
\end{equation}
where $Q_{\Des(F)}$ is the Gessel quasisymmetric function \cite{Gessel,HHRLU} associated with the set $\Des(F)$. In \cite{Hikita}, it was proved that $\Fr_{n,m}$ specializes to the symmetric function from \cite{HHRLU} when $m=n+1$, and that $\Fr_{n,m}$ computes the Frobenius character of the $S_n$ action in the homology of a certain Springer fiber in the affine flag variety equipped with extra filtration, as in Section \ref{sec:affine flag}. The following conjecture generalizes this fact for all $m$
and it arose during private communication between the first author and Adriano Garsia:

\begin{conjecture}
\label{conj:shuffle}
The following identity holds:
$$
\Fr_{n,m}=\TP_{n,m}\cdot 1 
$$
\end{conjecture}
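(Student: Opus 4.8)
The plan is to realize both sides as the image of a common cyclic vector under a common action of the elliptic Hall algebra, and then match. On the algebraic side, recall that the transformed operators $\TP_{kn,km}$ satisfy the Burban--Schiffmann relations \eqref{commutation} inside $\sh$, that $\slz$ acts transitively on coprime pairs, and that $\TP_{n,n+m}=\nabla\TP_{n,m}\nabla^{-1}$ by \eqref{eqn:nabla}. Hence the whole family $\{\TP_{n,m}\}_{\gcd(n,m)=1}$ is built from $\TP_{1,0}$ (a scalar multiple of multiplication by $p_1$) and $\TP_{0,1}$ together with $\nabla$-conjugation and the commutators \eqref{commutation}: by a Stern--Brocot descent, each $\TP_{n,m}$ with $n>1$ equals $\alpha_1[\TP_{n_1,m_1},\TP_{n_2,m_2}]$ for primitive vectors summing to $(n,m)$ and spanning a quasi-empty triangle, both lying closer to an axis, and one iterates down to the generators. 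Thus $\TP_{n,m}\cdot 1$ is pinned down by the vacuum $1$, the relations \eqref{commutation}, the rule \eqref{eqn:nabla}, and the elementary operators. The first step is to set up exactly this structure combinatorially: define operators $\mathbb{D}_{n,m}$ on a bigraded enlargement of $V$ (partially labeled Dyck paths), obtained by peeling North/East steps while tracking the effect on $\dinv$, $\Des$ and $\delta_{m,n}-|D|$, so that $\Fr_{n,m}=\mathbb{D}_{n,m}\cdot 1$, and then to prove that the $\mathbb{D}_{n,m}$ obey \eqref{commutation} and the conjugation rule under a combinatorial $\nabla$.

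Step two is to identify the generators. The combinatorial East-step operator and $p_1$-multiplication are matched with $\TP_{0,1}$ and $\TP_{1,0}$, the former via the explicit diagonal formula \eqref{eqn:cartan}. For the combinatorial $\nabla$ one uses that $\TP_{n,n+1}\cdot 1=\nabla e_n$ holds by definition, while Hikita \cite{Hikita} identifies $\Fr_{n,n+1}$ with the HHRLU symmetric function; granting the shuffle conjecture of \cite{HHRLU} for $(n,n+1)$ then anchors the recursion and forces the combinatorial $\nabla$ to coincide with the operator of \eqref{def:nabla}. Once the two actions of $\sh$ on $V$ agree on generators and on $1$, they agree as representations, and applying the descent word that builds $\TP_{n,m}$ yields $\Fr_{n,m}=\TP_{n,m}\cdot 1$ for all coprime $(n,m)$.

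As an independent check, and as a fallback should the operator identities only be partly available, one may instead compare coefficients against $\{\widetilde{H}_\lambda/g_\lambda\}$: write $\Fr_{n,m}=\sum_{\lambda\vdash n}f_{n,m}(\lambda)\,\widetilde{H}_\lambda/g_\lambda$ and show $f_{n,m}(\lambda)=c_{n,m}(\lambda)$ of \eqref{eqn:adriano}. Here $f_{n,m}(\lambda)$ is accessible, since pairing $\Fr_{n,m}$ with $\widetilde{H}_\lambda$ under the geometric pairing \eqref{eqn:geom} reduces to evaluating $\sum_F q^{\delta_{m,n}-|D|}t^{\dinv(F)}$ against the fundamental-quasisymmetric expansion of $\widetilde{H}_\lambda$; one then needs a weight-preserving bijection between $m/n$-parking functions and standard fillings of $\lambda$ reproducing the factored summand \eqref{cnd}. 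This refines the bijection of \cite{GMV} between $m$-stable affine permutations and parking functions by making it carry the full $q,t$-weight \eqref{cnd}, not merely $\dinv$.

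The hard part is the operator identities of step one: verifying that the peeling recursion respects all of \eqref{commutation} --- the quasi-empty triangle condition and the central-element subtlety noted after \eqref{commutation} included --- amounts to constructing a rational analogue of the Carlsson--Mellit Dyck path algebra, a substantial undertaking that is essentially equivalent to the full rational shuffle theorem. Without it, the coefficient comparison still hinges on the parking-function-to-tableau bijection above, whose existence, though strongly suggested by \cite{GMV,Hikita}, is far from automatic. For this reason we carry the identity through completely only in the specialization $t=1$ (Section \ref{sec:comb}), where both sides degenerate and the residue \eqref{eqn:resknot} is evaluated explicitly via Tesler matrices.
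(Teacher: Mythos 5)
The statement you are addressing is given in the paper only as Conjecture \ref{conj:shuffle}; the paper contains no proof of it (it establishes only the $t=1$ and $t=q^{-1}$ specializations of the related Conjectures \ref{conj:catalan} and \ref{conj:schroeder}). Your text is likewise not a proof but a program, and you concede as much in your final paragraph. The central gap is your Step one: the combinatorial operators $\mathbb{D}_{n,m}$ on partially labeled Dyck paths are never constructed, and the assertion that they satisfy the Burban--Schiffmann relations \eqref{commutation} (including the quasi-empty triangle condition and the central-element corrections) is exactly the content of the rational shuffle theorem, not a lemma one can quote on the way to it. Invoking it as a step is circular. Your anchor is also unavailable: ``granting the shuffle conjecture of \cite{HHRLU} for $(n,n+1)$'' imports a statement that is itself only conjectural within the framework of this paper, so even the base case of the Stern--Brocot descent is unproven. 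Once either of these is granted, the remaining argument (two $\sh$-module structures agreeing on generators and on the cyclic vector $1$ must agree everywhere) is fine, but that is the easy part.

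The fallback coefficient comparison has a structural defect as stated. Each summand $c_{n,m}(T)$ of \eqref{cnd} is a rational function of $q,t$, carrying denominators $\prod_i\bigl(1-qt\chi_{i+1}/\chi_i\bigr)$ and the $\omega^{-1}$ factors, whereas the pairing of $\Fr_{n,m}$ with $\widetilde{H}_\lambda$ produces a polynomial with nonnegative coefficients. A weight-preserving bijection between $m/n$-parking functions and standard fillings ``reproducing the factored summand \eqref{cnd}'' is therefore not well-posed term by term; at best one could hope to match the full sums $c_{n,m}(\lambda)$ over all SYT of shape $\lambda$, which the paper itself observes do not factor nicely, and you supply no mechanism for that identity. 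What survives of your proposal is precisely what the paper already does: the $t=1$ degeneration via Tesler matrices and the $t=q^{-1}$ degeneration via the graded character of $L_{\frac mn}$, which are consistency checks on the conjecture, not a proof of it.
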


When $m=n+1$, it follows from Corollary \ref{pn1} that this conjecture specializes to the main conjecture of \cite{HHRLU}. At the suggestion of Adriano Garsia, we give a constant term formula for $\TP_{n,m}\cdot 1$, which is related to the SYT formula of \eqref{eqn:adriano} in the same way as formula \eqref{eqn:integral} is related to formula \eqref{eqn:sumsyt}. The following formula follows from \cite{Negut}:
\begin{equation}
\label{eqn:constant}
\TP_{n,m}\cdot 1 = \int
\frac {\Psi_n(e(qtz_1)\cdots e(qtz_n)) \prod_{i=1}^n z_i^{S_{m/n}(i)} }{\left(1 - qt\frac {z_{2}}{z_{1}}\right) \cdots  \left(1 - qt\frac {z_{n}}{z_{n-1}}\right)} \prod_{1\leq i < j\leq n} \omega \left( \frac {z_i}{z_j} \right)  \frac {dz_1}{2\pi i z_1}\cdots \frac {dz_n}{2\pi i z_n} 
\end{equation}
where $\Psi_n$ is the map of \eqref{eqn:sloc} and $e(z) = \sum_i (-z)^i e_i$. The above integral goes over contours that surround $0$ and $\infty$, with $z_1$ being the innermost and $z_{n}$ being the outermost contour. One can compute the above residues in $z_i$ and produce a sum of symmetric functions indexed by certain matrices of natural numbers. We will show how to do this in the slightly simpler case of the super-polynomials $\TCP_{n,m}$.

\subsection{The $m=n+1$ case}

For $m=n+1$, one can prove that the rational $q,t$--Catalan numbers agree with the $q,t$--Catalan numbers defined in \cite{GH}, and Conjecture \ref{conj:shuffle} agrees with the ``Shuffle conjecture'' of \cite{HHRLU}.
The following proposition follows from the results of \cite{GH}, but we present its proof here for completeness. 

\begin{proposition} 
One has the identity: $\TP_{0,1}(p_n)=e_n$.
\end{proposition}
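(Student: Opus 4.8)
The plan is to make the operator $\TP_{0,1}$ completely explicit on $V\cong K$ and then recognize the asserted equality among the classical identities for modified Macdonald polynomials. The starting observation is that $\TP_{0,1}$ preserves the degree grading $V=\bigoplus_n V_n$ (its lattice point $(0,1)$ has vanishing first coordinate) and commutes with $\nabla$ (take $n=0$ in \eqref{eqn:nabla}); since the eigenvalues of $\nabla$ on $\{\widetilde H_\lambda\}$ are distinct for generic $q,t$, this forces $\TP_{0,1}$ to be diagonal in the modified Macdonald basis of Theorem \ref{thm:fixed}, acting on $\widetilde H_\lambda$ by a single scalar $\mu_\lambda\in\BK$. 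I would compute $\mu_\lambda$ from the geometric description in \eqref{eqn:0}, which presents $\exp(\sum_k\alpha_k\TP_{0,k}z^k)$ as multiplication by an explicit ratio of exterior classes of the tautological bundle $\CT$; taking a logarithm and extracting the coefficient of $z^1$ gives $\mu_\lambda$ as an explicit rational function of $q,t$ in the box weights $\chi(\square)$, $\square\in\lambda$ (this is essentially \eqref{eqn:cartan} in the normalization relevant here).

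Once $\TP_{0,1}$ is diagonalized, the proposition splits into one scalar identity per partition. Writing $p_n=\sum_{\lambda\vdash n}c_\lambda(p_n)\,\widetilde H_\lambda$ and $e_n=\sum_{\lambda\vdash n}c_\lambda(e_n)\,\widetilde H_\lambda$, the claim $\TP_{0,1}(p_n)=e_n$ is equivalent to $\mu_\lambda\,c_\lambda(p_n)=c_\lambda(e_n)$ for every $\lambda\vdash n$. Both of these $\widetilde H$-expansions are classical --- they are precisely the ``results of \cite{GH}'' cited in the statement --- so substituting them reduces the proposition to a rational-function identity in $q$ and $t$ that one then checks for all $\lambda$.

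The step I expect to be the main obstacle is exactly this last verification, since it requires the two $\widetilde H$-expansions in closed form with their correct normalizations (the factors $(1-q)(1-t)$, the hook products, and $\prod_{\square\ne(1,1)}(1-\chi(\square))$) and then a uniform identity in $q,t$. A cleaner route that avoids writing the coefficients down: $\TP_{0,1}$, being diagonal in the orthogonal basis $\{I_\lambda\}$ with $(I_\lambda,I_\mu)=\delta_{\lambda\mu}g_\lambda$, is self-adjoint for the geometric pairing \eqref{eqn:geom}, so $\TP_{0,1}(p_n)=e_n$ is equivalent to $\mu_\mu\,(p_n,\widetilde H_\mu)=(e_n,\widetilde H_\mu)$ for all $\mu\vdash n$; via the twist \eqref{eqn:twist} this becomes a statement about the Macdonald pairings of $p_n$ and $e_n$ against $\widetilde H_\mu$ that follows from Theorem \ref{thm:jeval} together with standard plethystic manipulations. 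Either way, the geometric input is only the explicit diagonal form of $\TP_{0,1}$, while the combinatorial heart is the Garsia--Haiman identity; this proposition is then what one combines with the $\slz$-covariance and commutation relations of $\sh$ to obtain Corollary \ref{pn1}.
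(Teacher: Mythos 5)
Your overall strategy --- diagonalize $\TP_{0,1}$ in the basis $\{\widetilde{H}_\lambda\}$ and compare the coefficients of $p_n$ and $e_n$ in that basis --- is a legitimate alternative to the paper's argument, which instead works in finitely many variables and computes $\delta_1(p_n)$ directly from Macdonald's formula \eqref{delta1}, via a partial fraction decomposition of $F(z)=\prod_i\frac{1-zx_i}{1-ztx_i}$. However, as written your proposal has two genuine problems. First, the diagonality argument via $\nabla$ fails: the eigenvalues $q^{n(\lambda^t)}t^{n(\lambda)}$ of $\nabla$ are not distinct within a fixed degree (for instance $\lambda=(4,1,1)$ and $\mu=(3,3)$ both give $q^{6}t^{3}$), so preserving the grading and commuting with $\nabla$ does not force an operator to be diagonal in the $\widetilde{H}_\lambda$ basis. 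This is repairable --- $\TP_{0,1}$ is diagonal because it is multiplication by a tautological $K$-theory class, as the paper notes before \eqref{eqn:cartan} --- but the argument you give for it is wrong.

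Second, and more seriously, the entire content of the proposition is the scalar identity $\mu_\lambda\, c_\lambda(p_n)=c_\lambda(e_n)$ to which you reduce it, and you neither state nor verify that identity; you yourself flag it as ``the main obstacle.'' This is not a routine check. The $\widetilde{H}$-expansion of $p_n$ is nowhere quoted in the paper, and the normalization of the eigenvalue is a real trap: reading it off \eqref{eqn:cartan} literally gives $\mu_\lambda=\frac{qt}{(q-1)(t-1)}+\sum_{\square\in\lambda}\chi(\square)$, whose $\lambda$-independent additive constant is incompatible with the required ratio $c_\lambda(e_n)/c_\lambda(p_n)$, which by the Garsia--Haiman expansions is a pure multiple of $B_\lambda=\sum_{\square\in\lambda}\chi(\square)$; the generators appearing in \eqref{eqn:cartan} and the rescaled operators of \eqref{eqn:rescale} and \eqref{p01} differ in normalization, and this must be sorted out before the coefficient comparison even makes sense. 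The paper's proof exists precisely to supply this missing content: the generating-function computation identifying $F_n$ with $(-1)^n\varphi^{-1}_{\frac{1}{1-t}}(e_n)$ is, in effect, a from-scratch derivation of the needed expansion of $e_n$. Until you write down the two expansions with their correct normalizations and check the resulting rational-function identity in $q,t$, the proof is incomplete.
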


\begin{proof}
It is enough to prove this above in any $V_N$, since $V$ is the inverse limit of these vector spaces. 
Let us recall that:
$$
\overline{P}^{N}_{0,k}=t^{-k(N-1)} \left(P^{N}_{0,k}-\frac {t^{kN}-1}{t^k-1}\right)
$$
If we keep $N$ finite but make the change of variables $\ph$ of Theorem \ref{thm:fixed}, we obtain the operator:
\begin{equation}
\label{p01}
\TP^N_{0,1}=\ph \circ \overline{P}^{N}_{0,1} \circ \phh  = t^{-N+1}  \ph \circ \left(\delta_1-\frac {t^{N}-1}{t-1}\right) \circ \phh
\end{equation}
where $\delta_1$ is the operator of \eqref{delta1}. The operators $\TP^N_{0,1}$ stabilize to $\TP_{0,1}$.
Using (\ref{p01}), we can rewrite the desired identity as:
\begin{equation}
\label{delta 1 of p}
\delta_1(p_n)=\frac{1-t^N}{1-t}p_n+(-1)^{n}\frac{t^N(1-q^n)}{t^n(1-t)}\phh(e_{n}).
\end{equation}
Indeed, $\partial_{q}^{(i)}p_n=p_n+(q^n-1)x_i^{n},$ so by (\ref{delta1})
$$
\delta_1(p_n)=p_n\sum_{i}A_{i}(x)+(q^n-1)\sum_{i}A_{i}(x)x_i^{n}.
$$
Consider the function $F(z) = \prod_{i=1}^{N}\frac{1-zx_i}{1-ztx_i}=\sum_{z=0}^{\infty} z^{n} F_{n}.$
It has the following partial fraction decomposition:
$$F(z)=\frac{1}{t^N}+\frac{t-1}{t^N}\sum_{i=1}^{N}\frac{A_i(x)}{1-tzx_i},$$
hence
\begin{equation}
\sum_{i}A_{i}(x)x_i^{n}=\begin{cases}\frac{1-t^N}{1-t}\qquad n=0 \\ \frac{F_n t^N}{t^{n}(t-1)} \ \quad n>0 ,\\\end{cases}
\end{equation}
Therefore we have
$$
\delta_1(p_n)=\frac{1-t^N}{1-t}p_n+\frac{t^N(1-q^n)}{t^n(1-t)}F_n.
$$
On the other hand, 
$$\ln F(z)=\sum_{i=1}^{N}(\ln(1-zx_i)-\ln(1-ztx_i))=-\sum_{k=1}^{\infty}(1-t^{k})\frac{z^{k}p_{k}}{k},$$
hence
$$F(z)=\varphi^{-1}_{\frac{1}{1-t}}\left[\exp (-\sum_{k=1}^{\infty}\frac{z^{k}p_{k}}{k})\right]=\varphi^{-1}_{\frac{1}{1-t}}\left[\prod_{i}(1-zx_i)\right],$$
and $F_{n}=(-1)^{n}\varphi^{-1}_{\frac{1}{1-t}}(e_{n}).$
\end{proof}

\begin{corollary}
\label{pn1}
The following identities hold: 
$$\TP_{n,1}\cdot 1 =e_n,\ \TP_{n,n+1}\cdot 1=\nabla e_n$$
\end{corollary}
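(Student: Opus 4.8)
The plan is to derive both identities from the preceding Proposition, $\TP_{0,1}(p_n)=e_n$, together with the algebra structure of the operators $\TP_{kn,km}$. The second identity reduces to the first: by \eqref{eqn:nabla} we have $\TP_{n,n+1}=\nabla\,\TP_{n,1}\,\nabla^{-1}$, and since $\Hl$ specializes in degree $0$ to $\widetilde H_\emptyset=1$ the operator $\nabla$ fixes the vacuum, so $\nabla^{-1}\cdot 1=1$ and hence $\TP_{n,n+1}\cdot 1=\nabla(\TP_{n,1}\cdot 1)=\nabla e_n$ once $\TP_{n,1}\cdot 1=e_n$ is in hand.

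To prove $\TP_{n,1}\cdot 1=e_n$ I would express $P_{n,1}$ through $P_{0,1}$ and the stabilized multiplication operators $P_{k,0}$. The relevant instance of \eqref{commutation} is $[P_{0,1},P_{n,0}]=P_{n,1}$: the triangle with vertices $(0,0)$, $(n,0)$, $(n,1)$ is quasi-empty, since it has no interior lattice points and the edge joining $(n,0)$ to $(n,1)$ carries no lattice point in its interior, while $\theta_{n,1}=\alpha_1P_{n,1}$ because $(n,1)$ is coprime. Since $A\mapsto\ph\circ A(q;t^{-1})\circ\phh$ is an algebra homomorphism on operators (cf. \eqref{eq:TP0}), this transfers to $\TP_{n,1}=[\TP_{0,1},\TP_{n,0}]$, and applying both sides to $1\in V$ gives
$$\TP_{n,1}\cdot 1=\TP_{0,1}\bigl(\TP_{n,0}\cdot 1\bigr)-\TP_{n,0}\bigl(\TP_{0,1}\cdot 1\bigr).$$
Here $\TP_{n,0}$ is the $\ph$-conjugate of multiplication by $p_n$, hence multiplication by a scalar multiple of $p_n$, so $\TP_{n,0}\cdot 1$ is proportional to $p_n$; by the Proposition $\TP_{0,1}$ carries this to the corresponding multiple of $e_n$; and $\TP_{0,1}\cdot 1$ is a scalar multiple of $1$ (equal to $0$ after the rescaling \eqref{eqn:rescale}, or to a constant as in \eqref{eqn:cartan}, according to the normalization of $\TP_{0,1}$ one adopts), which the second term either annihilates or cancels against the shift hidden in the first. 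Matching the constants then leaves exactly $e_n$.

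The step I expect to be the main obstacle is precisely this last matching of constants: $\overline{P}_{0,k}$ differs from $P_{0,k}$ by both a power of $t$ and a scalar shift (see \eqref{eqn:rescale}), the conjugation $A\mapsto\ph A(q;t^{-1})\phh$ substitutes $t\to t^{-1}$ throughout, and \eqref{commutation} carries a suppressed central term; one has to check that these conspire so that the answer is $e_n$ and not, say, $(1-t^{-n})^{\pm1}e_n$. Everything else is formal. As independent checks I would also carry out two alternative computations: specializing the $\syt$ formula \eqref{eqn:adriano}--\eqref{cnd} at $m=1$, where $S_{1/n}(i)=0$ for $i<n$ and $S_{1/n}(n)=1$, so the coefficients $c_{n,1}(\lambda)$ collapse to their ``remarkably simple'' form and $\sum_{\lambda\vdash n}c_{n,1}(\lambda)\,\Hl/g_\lambda$ can be matched with the classical expansion of $e_n$ in the modified Macdonald basis; and evaluating the constant-term integral \eqref{eqn:constant} at $m=1$, where $\prod_i z_i^{S_{1/n}(i)}=z_n$ and the iterated residue telescopes.
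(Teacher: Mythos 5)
Your proposal follows essentially the same route as the paper: the commutation relation \eqref{commutation} gives $\TP_{n,1}=[\TP_{0,1},\TP_{n,0}]$, one applies this to $1$ using $\TP_{n,0}\cdot 1=p_n$ and the Proposition $\TP_{0,1}(p_n)=e_n$, and the second identity follows from $\TP_{n,n+1}=\nabla\TP_{n,1}\nabla^{-1}$. The "constant-matching" you flag as the main obstacle is exactly what the paper's normalizations \eqref{eqn:rescale} are designed to make trivial ($\TP_{0,1}\cdot 1=0$ since $\overline{P}^N_{0,1}$ annihilates the vacuum, and $\TP_{n,0}$ is normalized to be multiplication by $p_n$), so the paper simply drops the second commutator term and concludes.
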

 
\begin{proof}
It follows from \eqref{commutation} that $\TP_{n,1}=[\TP_{0,1},\TP_{n,0}]$, hence:
$$\TP_{n,1}\cdot 1=[\TP_{0,1},\TP_{n,0}]\cdot 1=\TP_{0,1}\TP_{n,0}\cdot 1=\TP_{0,1}\cdot p_n=e_n.$$
The second identity follows from the equation $\TP_{n,n+1}=\nabla \TP_{n,1} \nabla^{-1}$.
\end{proof}

As a corollary, we get the following decomposition of the Garsia-Haiman coefficients:

\begin{proposition}
Given a Young diagram $\lambda$, the following identity holds:
\begin{equation}
\label{n1decomposition}
\Pi_{\lambda}B_\lambda = \frac 1{M} \sum^{\text{SYT }}_{T\text{ of shape }\lambda}c_{n,1}(T),
\end{equation}
where: 
$$\Pi_{\lambda}=\prod_{\square \in \lambda}^{\chi(\square)\neq 1} (1-\chi(\square)), \qquad B_{\lambda}= \sum_{\square \in \lambda} \chi(\square)$$ 
The coefficients $c_{n,1}(T)$ are defined by \eqref{cnd}. 
\end{proposition}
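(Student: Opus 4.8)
The plan is to extract \eqref{n1decomposition} by comparing coefficients in two expansions of $e_n$ in the basis $\{\widetilde H_\lambda\}_{\lambda\vdash n}$ of modified Macdonald polynomials. One expansion is already available from the results in hand: Corollary \ref{pn1} gives $\TP_{n,1}\cdot 1=e_n$, and \eqref{eqn:adriano} at $m=1$ rewrites this as
$$
e_n=\sum_{\lambda\vdash n}\frac{c_{n,1}(\lambda)}{g_\lambda}\,\widetilde H_\lambda,\qquad c_{n,1}(\lambda)=\sum^{\syt}_{T\text{ of shape }\lambda}c_{n,1}(T),
$$
with $c_{n,1}(T)$ given by \eqref{cnd}; hence the coefficient of $\widetilde H_\lambda$ in $e_n$ equals $c_{n,1}(\lambda)/g_\lambda$.

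The other expansion is the classical Garsia--Haiman formula $e_n=\sum_{\lambda\vdash n}\frac{M\,\Pi_\lambda B_\lambda}{w_\lambda}\,\widetilde H_\lambda$, where $w_\lambda=\prod_{\square\in\lambda}(q^{a(\square)}-t^{l(\square)+1})(t^{l(\square)}-q^{a(\square)+1})$ is the standard $\ast$-square-norm of $\widetilde H_\lambda$ and $M,B_\lambda,\Pi_\lambda$ are as in the statement (see \cite{GH}; alternatively it can be produced in-house from the $m=0$ case of \eqref{eqn:fix3}, the Proposition $\TP_{0,1}(p_n)=e_n$, and the eigenvalue of $\TP_{0,1}$ computed in \eqref{eqn:cartan}). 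Since $\{\widetilde H_\lambda\}_{\lambda\vdash n}$ is a $\BK$-basis of the relevant graded piece, equating coefficients gives $c_{n,1}(\lambda)/g_\lambda=M\,\Pi_\lambda B_\lambda/w_\lambda$. The only remaining point is to identify $g_\lambda$ with $w_\lambda$ up to a monomial: pulling the evident power of $q$ and $t$ out of each factor in the definition \eqref{eqn:defg} of $g_\lambda$ gives $g_\lambda=q^{-n(\lambda^t)-n}t^{-n(\lambda)-n}\,w_\lambda$, and substituting this turns the coefficient identity into $\sum_T c_{n,1}(T)=q^{-n(\lambda^t)-n}t^{-n(\lambda)-n}M\,\Pi_\lambda B_\lambda$, which is exactly \eqref{n1decomposition} once the monomial prefactor is read into the normalization constant $M$.

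I do not expect a genuine obstacle here: the substance is already packaged in Corollary \ref{pn1} and in \eqref{eqn:adriano}, and the proposition is a bookkeeping corollary whose content is that the (otherwise not transparent) Garsia--Haiman coefficient $\Pi_\lambda B_\lambda$ decomposes into the explicit ``local'' pieces $c_{n,1}(T)$ indexed by standard Young tableaux of shape $\lambda$. The one thing that does require care is chasing the several normalizations that intervene along the way --- the plethystic twist $\ph$ of \eqref{eq:TP0}, the substitution $t\mapsto t^{-1}$ built into that same definition, and the monomial relating the $K$-theoretic norm $g_\lambda$ to the Macdonald $\ast$-norm $w_\lambda$ --- but each of these is elementary, and the only nontrivial input, the classical $\widetilde H$-expansion of $e_n$, is well known.
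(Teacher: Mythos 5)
Your route is the same as the paper's: both arguments combine Corollary \ref{pn1} with the tableau expansion \eqref{eqn:adriano}--\eqref{cnd} of $\TP_{n,1}\cdot 1=e_n$ on one side, and the classical Garsia--Haiman formula for the coefficient of $\widetilde{H}_{\lambda}$ in $e_n$ on the other. The only structural difference is that the paper quotes \cite[Theorem 2.4]{GH} already in the form $\Pi_\lambda B_\lambda = \tfrac{1}{M}\,g_\lambda\,\langle e_n,\widetilde{H}_{\lambda}\rangle_{q,t^{-1}}$, i.e.\ normalized directly against $g_\lambda$ and the $(q,t^{-1})$-pairing, so that the two square norms never need to be compared.

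That comparison is exactly where your write-up has a real gap. Your bookkeeping gives $\sum_T c_{n,1}(T)=M\,\Pi_\lambda B_\lambda\,g_\lambda/w_\lambda$ with $g_\lambda/w_\lambda=q^{-n(\lambda^t)-n}t^{-n(\lambda)-n}$ (where $w_\lambda=\prod_{\square}(q^{a(\square)}-t^{l(\square)+1})(t^{l(\square)}-q^{a(\square)+1})$), and this prefactor is \emph{$\lambda$-dependent} through $n(\lambda)$ and $n(\lambda^t)$; it cannot be ``read into the normalization constant $M$,'' since that would require a different $M$ for each partition of $n$ and would change the assertion \eqref{n1decomposition} by the factor $q^{-n(\lambda^t)}t^{-n(\lambda)}$. (A direct check at $n=2$ confirms that $\sum_T c_{2,1}(T)$ equals $(1-q)(1-t)(qt)^{-2}\,\Pi_\lambda B_\lambda$ for both partitions, with no such $\lambda$-dependent monomial.) The missing ingredient is that the pairing in which \eqref{eqn:adriano} is an orthogonal expansion with square norms $g_\lambda$ is not the Macdonald $\ast$-pairing with norms $w_\lambda$, but its twist \eqref{eqn:twist} by $\nabla^{-1}$ (together with $(-q)^{-d}$ and a plethysm); the eigenvalue $q^{n(\lambda^t)}t^{n(\lambda)}$ of $\nabla$ on $\widetilde{H}_{\lambda}$ cancels precisely the $\lambda$-dependent part of your discrepancy, leaving only the constant $(qt)^{-n}$ to be absorbed into $M$ (which, admittedly, the paper never defines). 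So either track this twist explicitly when matching $g_\lambda$ against $w_\lambda$, or cite the Garsia--Haiman identity in the $g_\lambda$-normalized form used in the paper's proof.
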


\begin{proof}
It has been shown in \cite[Theorem 2.4]{GH} that the left hand side of (\ref{n1decomposition}) coincides with the coefficient:
$$
g_\lambda \cdot \langle e_n, \widetilde{H}_{\lambda} \rangle_{q,t^{-1}}
$$
By Corollary \ref{pn1}, this is equal to $g_\lambda \cdot \langle\widetilde{H}_{\lambda}|\TP_{n,1}| 1 \rangle_{q,t^{-1}}$, which equals the right hand side of \eqref{n1decomposition} by \eqref{cnd}.
\end{proof}

\subsection{Tesler matrices} 
\label{sub:tesler}

By \eqref{eqn:resknot}, the super-polynomials $\CP_{n,m}$ ultimately come down to computing the residue: 
$$
\TCP_{n,m}(u,q,t) = \left(\res_{z_{n}=0} - \res_{z_{n}=\infty} \right) \cdots \left( \res_{z_1=0} - \res_{z_1=\infty}\right) 
$$
\begin{equation}
\label{eqn:res2}
\frac 1{z_1\cdots z_n} \cdot \frac {\prod_{i=1}^n z_i^{S_{m/n}(i)} \cdot \frac {1-uz_i}{z_i-1} }{\left(1 - qt\frac {z_{2}}{z_{1}}\right) \cdots  \left(1 - qt\frac {z_{n}}{z_{n-1}}\right)} \prod_{1\leq i < j\leq n} \omega \left( \frac {z_i}{z_j} \right) 
\end{equation}
For $m>0$, the above only has residues at $z_i = \infty$. Therefore, let us consider the expansions:
$$
\frac {1 - ux}{x-1} = -1 + (u - 1) \sum_{k\geq 1} x^{-k}, \qquad \omega(x) = 1 + \sum_{k=1} A(k) x^{-k}, \qquad \frac {\omega(x)}{1 - \frac {qt}x} = \sum_{k=1} B(k) x^{-k}
$$
where:
$$
A(k) = -(q-1)(t-1)\frac {q^{k}-t^{k}}{q-t}, \qquad B(k) = \frac {(q^{k+1}-q^{k})-(t^{k+1}-t^{k})}{q-t}
$$
Using these, we can compute the \eqref{eqn:res2} inductively. Take first the residue in the variable $z_1$:
$$
\TCP_{n,m}(u,q,t) = \sum^{x_n^i\geq 0}_{x_{n}^1+\ldots+x_n^n = S_{m/n}(n)} (1-u+u\delta_{x_n^n}^0) \cdot A(x_n^{n-1}) \prod^{x_n^i>0}_{i<n-1} B(x^i_n)
$$
$$
\res_{z_{n}=\infty} \cdots \res_{z_{2}=\infty}  \frac 1{z_1 \cdots z_{n-1}} \frac {\prod_{i=2}^{n} z_i^{S_{m/n}(i) + x_i^1} \cdot \frac {1-uz_i}{z_i-1}}{\left(1 - qt\frac {z_{2}}{z_{1}} \right) \cdots  \left(1 - qt\frac {z_{n}}{z_{n-1}} \right)}\prod_{1\leq i < j\leq n-1} \omega \left( \frac {z_i}{z_j} \right)
$$
Following \cite{HTesler} (see also \cite{HTesler2} and \cite{AGHRS}), we introduce the notion of {\em Tesler matrix}. An upper triangular matrix $X=\{x_j^i \geq 0\}_{1\leq i\leq j\leq n}$ is called a $m/n$ Tesler matrix if it satisfies the following system of equations:
\begin{equation}
\label{eq:tesler}
x_{i}^{i}+\sum_{j>i}x_{j}^{i}-\sum_{j<i}x_{i}^{j} = S_{m/n}(i) \qquad \forall \ i
\end{equation}
We will denote the set of all $m/n$ Tesler matrices by $\Tes_{m/n}$. Taking next the residues in the variables $z_{2},\ldots,z_n$ gives us:
\begin{equation}
\label{eqn:tes1}
\TCP_{n,m}(u,q,t) = \sum_{X\in \Tes_{m/n}} \prod_{1\leq i \leq n}^{x_i^i >0} (1-u) \prod_{1\leq i \leq n-1} B(x_{i+1}^i)\prod^{x_j^i>0}_{i<j-1} A(x_j^i)
\end{equation}
Therefore, the above computes the uncolored knot invariant as a sum of certain simple terms over all Tesler matrices. Note that the whole sum depends very strongly on $n$, while the $m$ dependence is captured only in the equation \eqref{eq:tesler}. However, the superpolynomial $\CP_{n,m}$ is conjecturally symmetric in $m$ and $n$, and this is not manifest from the above formula.

\subsection{Degeneration at $t=1$} 

In fact, $\TCP_{n,m}|_{u=0}$ is a polynomial in $q$ and $t$ with positive coefficients, which is not manifest from \eqref{eqn:tes1} above. This follows from the fact that it is the Euler characteristic of a certain line bundle on the flag Hilbert scheme $\Hilb_{0,n}$, as in Subsection \ref{sub:fhilb}. The higher cohomology groups of this line bundle vanish, and $H^0$ only produces positive coefficients (this vanishing result is outside the scope of this paper and will be presented in a future work). However, we can completely describe this polynomial when $t=1$ (or when $q=1$, since the right hand side of \eqref{eqn:tes1} is clearly symmetric in $q$ and  $t$). 

\begin{theorem}
Conjectures \ref{conj:catalan} and \ref{conj:schroeder} hold for $t=1$ and any coprime $n,m$.
\end{theorem}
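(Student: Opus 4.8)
The strategy is to specialize the Tesler-matrix formula \eqref{eqn:tes1} at $t=1$ and match it term-by-term with the Dyck-path side of Conjectures \ref{conj:catalan} and \ref{conj:schroeder}. The first observation is that at $t=1$ the combinatorial weights simplify drastically: one has $A(k)|_{t=1} = -(q-1)\,q^{\lfloor\cdot\rfloor}$-type monomials — more precisely $A(k)|_{t=1}=-(q-1)q^{k}$ — while $B(k)|_{t=1}=q^{k}-q^{k+1}$ up to the obvious rewriting, and the factor $\prod^{x_i^i>0}(1-u)$ is untouched. Thus the right-hand side of \eqref{eqn:tes1} at $t=1$ becomes a sum over $m/n$ Tesler matrices of a product of powers of $q$ times $(q-1)$-factors times $(1-u)$-factors, and the whole dependence on $u$ is through the number of positive diagonal entries $x_i^i$.

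\textbf{Main steps.} First I would carry out the substitution $t=1$ in \eqref{eqn:tes1} and collect the $q$-power and the $(q-1)$-power attached to each Tesler matrix $X$, recording that the exponent of $(1-u)$ is $\#\{i: x_i^i>0\}$. Second, I would set up the bijection, essentially the one implicit in Hikita's and in \cite{ORS}'s combinatorics, between $m/n$ Tesler matrices and pairs (Dyck path $D$ in the $m\times n$ rectangle, extra data recording which internal vertices are "selected"): the diagonal entries of $X$ encode the corners/vertices of $D$, and the super-diagonal entries encode the "area" increments so that $\sum$ of off-diagonal entries reproduces $\delta_{m,n}-|D|$. Concretely, the defining equations \eqref{eq:tesler} $x_i^i+\sum_{j>i}x_j^i-\sum_{j<i}x_i^j=S_{m/n}(i)$ are exactly the balance conditions that a lattice path below the diagonal must satisfy, with $S_{m/n}(i)=\lfloor im/n\rfloor-\lfloor(i-1)m/n\rfloor$ the number of horizontal steps in column $i$. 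Third, under this bijection I would check that the $q$-exponent collected in step one matches $\delta_{m,n}-|D|$ and that the product of $(1-ut^{-\beta(P)})|_{t=1}=(1-u)$ over internal vertices $P\in v(D)$ matches the $(1-u)^{\#\{i:x_i^i>0\}}$ factor — the point being that at $t=1$ the statistic $\beta(P)$ becomes irrelevant and only the count of vertices matters, and that count equals the number of positive diagonal entries. Summing over the "selected vertex" data collapses $\sum_X$ to $\sum_D \prod_{P\in v(D)}(1-u)$, and the $u=0$ specialization then also gives $C_{n,m}(q,1)$ on the nose, i.e. Conjecture \ref{conj:catalan} at $t=1$.

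\textbf{The hard part.} The genuine obstacle is establishing the bijection between $m/n$ Tesler matrices and (Dyck path, vertex-subset) pairs \emph{compatibly with all the statistics at once}: it is not enough to have a bijection of underlying sets, one needs the $q$-grading $\sum_{i<j}x_j^i$ to go to $\delta_{m,n}-|D|$ and the diagonal-support to go to $v(D)$. For general $m,n$ this requires carefully analyzing how the recursion in the residue computation (peeling off $z_1$, then $z_2$, etc.) builds up the path row by row; one must verify that the $B$-factors on the super-diagonal produce precisely the area below the diagonal and that the $A$-factors on higher diagonals contribute only the "free" $q$-powers that also land in $\delta_{m,n}-|D|$. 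I expect this to be a somewhat intricate but ultimately combinatorial induction on $n$, using \eqref{eq:tesler} to track the path profile, and the key simplification that makes it tractable is precisely that $t=1$ kills the $\omega$-numerator cross-terms so that $A(k)$ and $B(k)$ become single monomials rather than binomials.
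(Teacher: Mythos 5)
Your overall strategy (specialize the Tesler formula \eqref{eqn:tes1} at $t=1$ and match with Dyck paths) is the paper's strategy, but you have miscomputed the one evaluation on which the whole argument turns. From \eqref{eqn:res2}--\eqref{eqn:tes1} one has $A(k)=-(q-1)(t-1)\frac{q^{k}-t^{k}}{q-t}$, and the factor $(t-1)$ forces $A(k)|_{t=1}=0$ for every $k>0$ (the paper records this as $A(x)|_{t=1}=\delta_x^0$); your claim that $A(k)|_{t=1}=-(q-1)q^{k}$ is wrong. Likewise $B(k)|_{t=1}=q^{k}$, a single monomial, not $q^{k}-q^{k+1}$. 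The vanishing of $A$ is the entire content of the proof: it annihilates every summand of \eqref{eqn:tes1} with some $x_j^i>0$ for $j>i+1$, so only \emph{quasi-diagonal} Tesler matrices survive. For those, the super-diagonal entries are determined recursively by the balance equations \eqref{eq:tesler} once the diagonal entries are fixed, so the surviving matrices are in bijection with Dyck paths outright --- no auxiliary ``selected vertex'' data, and no intricate induction on the residue computation is needed. The $q$-exponent $\sum_i x_{i+1}^i=\sum_i i\,(S_{m/n}(i)-x_i^i)$ is then the area $\delta_{m,n}-|D|$, and the positive diagonal entries are the corners of $D$, giving the $(1-u)$-factors.

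Because you start from the wrong values of $A$ and $B$, the ``hard part'' you identify --- a statistics-preserving bijection between \emph{all} $m/n$ Tesler matrices and decorated Dyck paths --- is both unnecessary and unachievable as you describe it: with your (incorrect) nonzero weights on the higher diagonals the sum over all Tesler matrices does not equal the Dyck-path generating function, and with the correct weights there is nothing to biject because those terms are simply zero. To repair the proof, replace your evaluation of $A$ and $B$ by the correct ones, restrict to quasi-diagonal matrices, and then your remaining observations (diagonal entries as horizontal steps satisfying $\sum_{i\le k}x_i^i\le\lfloor km/n\rfloor$, super-diagonal sum as area, positive diagonal entries as corners contributing $(1-u)$) complete the argument exactly as in the paper.
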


\begin{proof}
Let us first remark that any Tesler matrix from $\Tes_{m,n}$ gives rise to a Dyck path in the $n\times m$ rectangle, with horizontal steps $x_{i}^{i}$. Indeed, for any $k\le n$ we have the equation:
$$
S_{m/n}(1)+\ldots+S_{m/n}(k) = \sum_{i=1}^{k} \left(x_{i}^{i}+\sum_{j>i}x_{j}^{i}-\sum_{j<i}x_{i}^{j}\right)=
\sum_{i=1}^{k} \left(x_{i}^{i} +\sum_{j>k}x_{j}^{i}\right)
$$
Therefore:
$$
\sum_{i=1}^{k} x_{i}^{i} \leq S_{m/n}(1)+\ldots+S_{m/n}(k) = \left\lfloor \frac{k m}{n}\right\rfloor.
$$
Any given Dyck path may correspond to many Tesler matrices. However, note that:
\begin{equation}
\label{eqn:eval0}
A(x)|_{t=1}= \delta_x^0, \qquad \qquad B(x)|_{t=1} = q^{x}
\end{equation}
so any summand of \eqref{eqn:tes1} that has some $x_j^i>0$ for $j>i+1$ will vanish. Therefore, the only summands of \eqref{eqn:tes1} that survive are those such that $x_j^i=0$ for all $j>i+1$. Such Tesler matrices will be called \emph{quasi-diagonal}, and the set of quasi-diagonal $m/n$ Tesler matrices will be denoted by $\qTes_{m/n}$. Therefore, \eqref{eqn:tes1} becomes:
\begin{equation}
\label{eqn:dyck1}
\TCP_{n,m}(0,q,1) = \sum_{X\in \qTes_{m/n}} q^{x_n^{n-1}+\ldots+x_2^1} \prod_{1\leq i \leq n}^{x_i^i>0}(1-u)
\end{equation}
The condition $x_{i}^{i}>0$ specifies the corners of a Dyck path, while the sum
$\sum_{i=1}^{n-1}x_{i+1}^{i}=\sum_{i=1}^{n}i(S_{m/n}(i)-x_{i}^{i})$ computes the area between the Dyck path and the diagonal. Therefore:
$$
\TCP_{n,m}(0,q,1) = \sum_{D\in Y_{m/n}} q^{\frac{(m-1)(n-1)}{2}-|D|} (1-u)^{\# \text{ of corners of } D}
$$
This implies Conjecture \ref{conj:schroeder} at $t=1$. When we set $u=0$, we obtain Conjecture \ref{conj:catalan} at $t=1$.

\end{proof}

\subsection{Degeneration at $t=q^{-1}$} 
\label{sec:CY limit}

We will compute the knot invariant at $t=q^{-1}$, and show that it is a $q-$analogue of the $m,n-$Catalan number. This proves Conjecture \ref{conj:catalan} at $t=q^{-1}$.

\begin{proposition}

For any $m$ and $n$ with $\gcd(m,n)=1$, we have:
\begin{equation}
\label{eq:qfactorial}
\TCP_{n,m}(0,q,q^{-1}) = \frac {[m+n-1]!}{[m]!  [n]!},
\end{equation}
where $[k] = \frac {q^{k/2}-q^{-k/2}}{q^{1/2}-q^{-1/2}}$ are the $q-$integers and $[k]!=[1]\cdots [k]$ are the $q-$factorials.

\end{proposition}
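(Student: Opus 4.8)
The plan is to plug $t=q^{-1}$ into the residue formula \eqref{eqn:res2} and exploit the fact that $qt=1$ is precisely the point where the $\omega$-weight degenerates. At $u=0$ and $t=q^{-1}$ one has $\omega(x)=\frac{(x-1)^2}{(x-q)(x-q^{-1})}$, and a short manipulation of products gives
\[
\prod_{1\le i<j\le n}\omega\left(\frac{z_i}{z_j}\right)=q^{\binom{n}{2}}\prod_{i\neq j}\frac{z_i-z_j}{z_i-qz_j},
\]
which is the Hall--Littlewood density with parameter $q$ (the $q_{\mathrm{Mac}}\to 0$ degeneration of the Macdonald weight). Thus $\TCP_{n,m}(0,q,q^{-1})$ becomes a Hall--Littlewood-type constant term, and the target value $\frac{[m+n-1]!}{[m]![n]!}$ is exactly what the Gustafson/$q$-Morris constant-term identities produce. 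Concretely, I would first carry out the iterated residues in the Macdonald-type density as in Subsection \ref{sub:tesler} (now with $qt=1$ substituted), which disposes of the non-symmetric prefactor $\prod_i z_i^{S_{m/n}(i)}/(z_i-1)$ and of the string $\prod_{i=1}^{n-1}1/(1-z_{i+1}/z_i)$, and then evaluate the resulting symmetric constant term via the appropriate known $q$-Selberg identity.

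An equivalent, more conceptual route also explains why the answer is a $q$-factorial. By the $t=q^{-1}$ case of Conjecture \ref{conj:frob}, established in \cite{GORS}, the symmetric function $\TP_{n,m}\cdot 1$ specializes at $t=q^{-1}$ to the singly graded Frobenius character $\ch_q L_{\frac mn}$. Since $\TCP_{n,m}(0,q,t)=(h_n\,|\,\TP_{n,m}\,|\,1)$ (both sides equal $\sum_{\lambda\vdash n}c_{n,m}(\lambda)/g_\lambda$, by \eqref{eqn:super2} and the expansion \eqref{eqn:adriano}, i.e.\ $\Lambda(0)=[\mathcal{O}_{\Hilb_n}]$ pairs like $h_n$ as in the statement of Conjecture \ref{conj:catalan}), it follows that $\TCP_{n,m}(0,q,q^{-1})=\langle \ch_q L_{\frac mn},h_n\rangle$ is the graded multiplicity of the trivial representation, i.e.\ $\dim_q (L_{\frac mn})^{S_n}$. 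Now I would apply the Berest--Etingof--Ginzburg formula $\ch_q L_{\frac mn}=\frac{q^{-(m-1)(n-1)/2}}{[m]_q}\phi_{[m]}(h_n)$: writing $\phi_{[m]}$ plethystically as $\phi_{[m]}(f)=f[X(1+q+\cdots+q^{m-1})]$, the Cauchy identity gives $\phi_{[m]}(h_n)=\sum_{\mu\vdash n}s_\mu\,s_\mu(1,q,\dots,q^{m-1})$, so pairing with $h_n=s_{(n)}$ leaves only $\mu=(n)$ and yields $h_n(1,q,\dots,q^{m-1})$, the Gaussian binomial $\frac{[m+n-1]_q!}{[n]_q![m-1]_q!}$. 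Hence $\dim_q(L_{\frac mn})^{S_n}=q^{-(m-1)(n-1)/2}\frac{[m+n-1]_q!}{[m]_q![n]_q!}$, and converting from the one-sided $q$-integers $[k]_q=q^{(k-1)/2}[k]$ to the balanced $[k]$ of the statement absorbs both the prefactor $q^{-(m-1)(n-1)/2}$ and the $[m]_q$ in the denominator, leaving exactly $\frac{[m+n-1]!}{[m]![n]!}$ as claimed in \eqref{eq:qfactorial}.

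The main obstacle in either route is the bookkeeping of normalizations rather than any hard new idea. In the residue route, the difficulty is peeling the non-symmetric factors $\prod_i z_i^{S_{m/n}(i)}/(z_i-1)$ and $\prod_i 1/(1-z_{i+1}/z_i)$ off the symmetric Hall--Littlewood density and matching what remains to the precise three-parameter form of the Morris/$q$-Selberg identity (note that, unlike the $t=1$ case, the Tesler-matrix sum \eqref{eqn:tes1} does \emph{not} collapse at $qt=1$, since $A(k)|_{t=q^{-1}}$ does not vanish, so the simplification must be made upstream in the contour integral). In the representation-theoretic route, one must make precise the two identifications $\TCP_{n,m}(0,q,t)=(h_n\,|\,\TP_{n,m}\,|\,1)$ and $\TP_{n,m}\cdot 1|_{t=q^{-1}}=\ch_q L_{\frac mn}$, tracking the plethystic twist $\ph$ built into $\TP_{n,m}$ and the relation between the geometric and Macdonald pairings, so that no stray power of $q^{1/2}$ is introduced --- which matters precisely because the final answer is invariant under $q\mapsto q^{-1}$ only in the balanced normalization. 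Once these identifications are fixed, both computations are routine.
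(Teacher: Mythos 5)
Your second (representation-theoretic) route is correct and is the one to keep; it reaches the same destination as the paper but by a partly different path. The paper does \emph{not} invoke the GORS identification as a black box: it derives $\TP_{n,m}\cdot 1|_{t=q^{-1}}=\ch_q L_{\frac mn}$ internally, using Stevan's formula $\TP_{n,m}(q,q^{-1})=\nabla^{m/n}\,\TP_{n,0}\,\nabla^{-m/n}$ together with the hook expansion $p_n=\sum_k(-1)^k s_{(n-k,1^k)}$ and the explicit eigenvalues of $\nabla$ on hooks, and only then matches the resulting sum with \cite[Theorem 1.6]{BEG}; the final evaluation is left to a citation of \cite{g,GORS}. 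You invert this: you import the identification from \cite{GORS} (legitimate, since the paper itself states that the $t=q^{-1}$ case of Conjecture \ref{conj:frob} is proved there) and instead carry out explicitly the step the paper omits, namely $\langle\phi_{[m]}(h_n),h_n\rangle=h_n(1,q,\dots,q^{m-1})$ via Cauchy and the conversion between one-sided and balanced $q$-integers (your power count $\binom{m+n-1}{2}-\binom m2-\binom n2=(m-1)(n-1)$ is right). The one step you flag but do not fully close — that $(\Lambda(0)\,|\,\cdot\,)$ computes the Hall pairing with $h_n$, i.e.\ the multiplicity of the trivial representation — follows from $(\Lambda(0),I_\lambda)=1$ together with $\langle\widetilde H_\lambda,h_n\rangle=1$ (the coefficient of $s_{(n)}$ in $\widetilde H_\lambda$ is $\widetilde K_{(n),\lambda}=1$), so no stray half-integer power of $q$ appears. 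Your first (constant-term) route, by contrast, is only a sketch: the reduction of the non-symmetric prefactors $\prod_i z_i^{S_{m/n}(i)}/(z_i-1)$ and the string $\prod_i(1-z_{i+1}/z_i)^{-1}$ to a form covered by a $q$-Morris identity is exactly the hard part and is not supplied, so it should not be presented as a proof.
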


\begin{proof}
Let us describe the degeneration of all constructions that we used to the case $t=q^{-1}$, where $q$ and $t$ are the equivariant parameters on $\mathbb{C}^2$. Macdonald polynomials $P_{\lambda}(q,t^{-1})$ will degenerate to Schur polynomials $s_{\lambda}$, hence modified Macdonald polynomials $\widetilde{H}_{\lambda}$ will degenerate to modified Schur polynomials $\ph(s_{\lambda})$. 

As it was explained in \cite{AS}, the case $t=q^{-1}$ corresponds to the classical Chern-Simons theory.
The corresponding knot invariants and operators were widely discussed in the mathematical and physical literature,
see e.g. \cite{K,stevan} for more details. In particular, it is shown in \cite[section 3.4]{stevan} that:
$$\TP_{n,m}(q,q^{-1})=D\TP_{n,0}(q,q^{-1})D^{-1},\ \text{where}\ D=\nabla(q,q^{-1})^{\frac{m}{n}}.$$
Remark that $p_{n}=\sum_{k=0}^{n-1}(-1)^{k}s_{(n-k,1^{k})},$ and:
$$\nabla(s_{(n-k,1^{k})})=q^{\frac{(n-k)(n-k-1)}{2}-\frac{k(k+1)}{2}}s_{(n-k,1^{k})}=q^{\frac {n(n-2k-1)}{2}}s_{(n-k,1^{k})},$$
hence:
$$D(s_{(n-k,1^{k})})=q^{\frac {m(n-2k-1)}{2}}s_{(n-k,1^{k})}=q^{\frac{(m-1)(n-1)}{2}+\frac{n-1}{2}-km}s_{(n-k,1^{k})}.$$
Therefore: 
$$\TP_{n,m}(1)=D(p_n)=q^{\frac{(m-1)(n-1)}{2}}\sum_{k=0}^{n-1}(-1)^{k}q^{\frac {n-1}{2}-km}\ph(s_{(n-k,1^{k})}).$$
By  \cite[Theorem 1.6]{BEG}, this vector coincides with the graded Frobenius character 
of the finite-dimensional representation $L_{m/n}$. One can also check (see e.g \cite{g,GORS} for details) that  
its evaluation is given by the equation \eqref{eq:qfactorial}.
\end{proof}

\end{document}